\newtheorem{theorem}{Theorem}
\newtheorem{lemma}[theorem]{Lemma}
\newtheorem{proposition}[theorem]{Proposition}
\newtheorem{corollary}[theorem]{Corollary}
\newtheorem{definition}[theorem]{Definition}
\newcommand{\newsection}[1] {\section{#1}\setcounter{theorem}{0}
 \setcounter{equation}{0}\par\noindent}
\newcommand{\R}{{\mathbb R}}
\newcommand{\C}{{\mathbb C}}
\newcommand{\Z}{{\mathbb Z}}
\newcommand{\g}{{\rm g}}
\newcommand{\dstar}{\textit{div}\,}
\renewcommand{\l}{\lambda}
\newcommand{\T}{{\mathcal T}}
\newcommand{\I}{{\mathcal I}}
\begin{document}

\title[Spectral cluster bounds]
{Sharp $L^p$ bounds on spectral clusters\\
  for Lipschitz metrics} \thanks{HK was supported by the DFG through
  SFB 611. HS was supported by NSF grant DMS-0654415. DT was supported
  by NSF grant DMS-0801261 and by the Miller Institute.}
\author{Herbert Koch} \address{Mathematisches Institute, University of
  Bonn, Beringstrasse 1, 53115 Bonn, Germany}
\email{koch@math.uni-bonn.de} \author{Hart F. Smith}
\address{Department of Mathematics, University of Washington, Seattle,
  WA 98195} \email{hart@math.washington.edu} \author{Daniel Tataru}
\address{Department of Mathematics, University of California,
  Berkeley, CA 94720} \email{tataru@math.berkeley.edu}

\begin{abstract}
  We establish $L^p$ bounds on $L^2$ normalized spectral clusters for
  self-adjoint elliptic Dirichlet forms with Lipschitz
  coefficients. In two dimensions we obtain best possible bounds for
  all $2\le p\le \infty$, up to logarithmic losses for $6<p\le 8$. In
  higher dimensions we obtain best possible bounds for a limited range
  of $p$.
\end{abstract}

\maketitle

\newsection{Introduction} Let $M$ be a compact, 2-dimensional manifold
without boundary, on which we fix a smooth volume form $dx$. Let $\g$
be a section of positive definite symmetric quadratic forms on
$T^*(M)$, and let $\rho$ be a strictly positive function on $M$.

Consider the eigenfunction problem
\begin{equation*}
  \dstar(\g\,d \phi)+\l^2\rho \phi=0\,,
\end{equation*}
where $\dstar$, which maps vector fields to functions, is the dual of
$d$ under $dx$.  This setup includes as a special case eigenfunctions
of the Laplace-Beltrami operator.  We refer to the real parameter $\l$
as the frequency of $\phi$, and take $\l\ge 0$.  Under the condition
that $\g$ and $\rho$ are measurable, with strictly positive lower and
upper bounds, there exists a complete orthonormal basis
$\{\phi_j\}_{j=1}^\infty$ of eigenfunctions for $L^2(M,\rho\, dx)$,
with frequencies satisfying $\l_j\rightarrow\infty$.

In this paper we prove the following, where for convenience of the
statement we take $\l\ge 2$.  Except for the factor $(\log \l)^\sigma$
in \eqref{qboundd}, these bounds are the best possible for general
Lipschitz $\g$ and $\rho$, in terms of the growth in $\l$, by an
observation of Grieser \cite{Gr}.

\begin{theorem}\label{theorem1.1}
  Suppose that $\g,\rho\in {\rm Lip}(M)$.  Assume that the frequencies
  of $u$ are contained in the interval $[\l,\l+1]$, so that
  \begin{equation*}
    u=\sum_{j:\l_j\in[\l,\l+1]} c_j\,\phi_j\,.
  \end{equation*}
  Then
  \begin{equation}\label{qboundc}
    \|u\|_{L^{p}(M)}\le C_p\, \lambda^{\frac 12-\frac 2p}\|u\|_{L^{2}(M)}\,,\quad 8<p\le\infty\,,
  \end{equation}
  and
  \begin{equation}\label{qboundd}
    \|u\|_{L^{p}(M)}\le C \lambda^{\frac 23(\frac 12-\frac 1p)}(\log\l)^\sigma
    \|u\|_{L^{2}(M)}\,,\quad 6 \le p\le 8\,,
  \end{equation}
  where $\sigma=\frac 32$ for $p=8$, and $\sigma=0$ for $p=6$.
\end{theorem}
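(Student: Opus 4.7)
The plan is to establish the estimates through a wave equation parametrix adapted to Lipschitz coefficients. Write $P=-\rho^{-1}\dstar(\g\,d\,\cd)$, which is positive self--adjoint on $L^2(M,\rho\,dx)$ with eigenvalues $\l_j^2$. For a Schwartz function $m$ with $m\ge 1$ on $[1,1+\l^{-1}]$ and $\hat m$ supported in $[-\d,\d]$ with $\d$ small and independent of $\l$, spectral localization gives $u=m(\l^{-1}\sqrt{P})u$, and
\begin{equation*}
  u=\frac{\l}{2\pi}\int \hat m(\l t)\,e^{it\sqrt{P}}u\,dt\,,
\end{equation*}
which reduces Theorem~\ref{theorem1.1} to uniform bounds on $\|e^{it\sqrt{P}}u\|_{L^p(M)}$ over a short time window, together with time averaging.

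The first step is to regularize $\g$ and $\rho$ at a spatial scale $\e=\l^{-\alpha}$, with $\alpha$ to be chosen, yielding smooth $\g_\e$, $\rho_\e$ with $\partial^2\g_\e=O(\e^{-1})$. Splitting $P=P_\e+(P-P_\e)$ by a paradifferential decomposition, the error $(P-P_\e)u$ acts as a perturbation of relative size $\l\e$ on the frequency shell where $u$ is concentrated, and is absorbed by Duhamel's formula over a time window of length $T$ with $T\l\e\lesssim 1$. The symmetrized operator $P_\e$ must be formed with respect to $\rho_\e\,dx$ to preserve self--adjointness.

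The second step is to construct a wave packet parametrix for $e^{it\sqrt{P_\e}}$ on this time window by propagating a tight Gaussian frame at the Heisenberg scale $\l^{-1/2}$ along the Hamilton flow of the principal symbol of $\sqrt{P_\e}$. Because this flow has $L^\infty$ curvature of size $\e^{-1}$, each packet remains concentrated in a curved tube of dimensions $\l^{-1/2}\times T$, and orthonormality of the frame is preserved up to a tolerable error on the chosen window. From the parametrix I extract two fundamental estimates: the pointwise bound $\|e^{it\sqrt{P_\e}}u\|_{L^\infty}\lesssim \l^{1/2}\|u\|_{L^2}$, which combined with $L^2$ conservation and interpolation yields \eqref{qboundc} for $p>8$; and a square function estimate expressing $|e^{it\sqrt{P_\e}}u|^2$ as a weighted sum of characteristic functions of the tubes, from which planar Kakeya--type combinatorics produce the $L^{p/2}$ bounds in the range $6\le p\le 8$.

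The third step is to optimize $\alpha$, equivalently the time window $T$, by balancing the regularization error $T\l\e\sim 1$ against the Kakeya gain in the square function; the critical case is $p=8$. This produces the exponent $\tfrac23\bigl(\tfrac12-\tfrac1p\bigr)$ in \eqref{qboundd}.

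The main obstacles I anticipate are twofold. First, the paradifferential analysis at the Lipschitz threshold is delicate: commutators between $P_\e$ and spectral cutoffs barely converge, and the symmetrization with respect to $\rho_\e\,dx$ must be handled carefully to keep the parametrix self--adjoint while absorbing the remainder. Second, the endpoint $p=8$ requires summing dyadic contributions over time scales in the square function estimate, and this is where the $(\log\l)^{3/2}$ loss in \eqref{qboundd} arises; by contrast, the $L^6$ estimate is subcritical for the planar Kakeya input and hence loss--free, consistent with $\sigma=0$ at $p=6$.
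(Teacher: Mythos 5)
There is a genuine gap, and it sits exactly where the main difficulty of the theorem lies. First, your claim that the dispersive bound $\|e^{it\sqrt{P_\e}}u\|_{L^\infty}\lesssim \l^{1/2}\|u\|_{L^2}$ together with $L^2$ conservation and interpolation yields \eqref{qboundc} for $p>8$ is numerically false: interpolating a global $L^2$ bound with a global $L^\infty$ bound of size $\l^{1/2}$ gives $\l^{\frac12-\frac1p}$, which misses the asserted exponent $\frac12-\frac2p$ by a factor $\l^{1/p}$. The whole point of the range $p_d<p<\infty$ is that no combination of uniform small-set (or fixed-time) bounds with interpolation can reach the sharp exponent; one must control how much $L^2$ mass can repeatedly concentrate on the same small sets over the full time window.

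Second, your parametrix and Kakeya-type square function argument live on a single window of length $T$ with $T\l\e\lesssim1$ (for Lipschitz coefficients this is the $\l^{-1/3}$ time scale), but the reduction via $u=\frac{\l}{2\pi}\int\hat m(\l t)e^{it\sqrt P}u\,dt$ requires bounds over a fixed time interval $|t|\le\delta$, i.e.\ over $\sim\l^{1/3}$ such windows. For Lipschitz metrics the known counterexamples (Smith--Sogge, Smith--Tataru) show that a focused bush of tubes can refocus periodically in later windows, so naive summation of per-window bounds produces polynomial losses for $6<p<\infty$ --- this is precisely why previous work stopped at $p\le 6$ and $p=\infty$. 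The missing ingredient is a quantitative long-time energy-overlap bound: in the paper this is the almost-orthogonality estimate for bush projections conjugated by the rough evolution (proved by weighted energy estimates that exploit only the order-one localization available for Lipschitz $a_\l$), fed into a combinatorial counting argument bounding how many time slices can contain an $(a,k,m)$-bush; it is this count, not the single-window estimate, that produces the sharp exponents for $p\ge 8$ and the $(\log\l)^{3/2}$ loss at $p=8$. Nothing in your proposal plays this role, so the argument as outlined can only recover the previously known range $2\le p\le 6$ and $p=\infty$. Two further, more technical, discrepancies: within a single window, Strichartz/square-function input alone is not enough at $p=8$ (the paper needs a bilinear $L^2$ estimate for transverse tube interactions; Strichartz alone would only cover $p>10$), and the coherent packet scale for the Lipschitz problem is $\l^{-2/3}$ in space over $\l^{-1/3}$ in time, not the Heisenberg scale $\l^{-1/2}$, which is not preserved by the regularized flow on the relevant window.
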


For $2\le p\le 6$, the bound \eqref{qboundd} is already known from
\cite{Sm}, without logarithmic loss,
\begin{equation*}
  \|u\|_{L^{p}(M)}\le C \lambda^{\frac 23(\frac 12-\frac 1p)}
  \|u\|_{L^{2}(M)}\,,\quad 2\le p\le 6\,.
\end{equation*}

To put the above estimates in context, we recall the previously known
results.  In case $\g$ and $\rho$ are $C^\infty$, Sogge \cite{So}
established the following bounds in general dimensions $d\ge 2$,
\begin{equation}\label{sogge}
  \begin{split}
    \|u\|_{L^p(M)}&\le C\,\l^{d(\frac 12-\frac 1p)-\frac
      12}\,\|f\|_{L^2(M)}\,, \qquad p_d\le p\le\infty\,,
    \\
    \|u\|_{L^p(M)}&\le C\,\l^{\frac{d-1}2(\frac 12-\frac
      1p)}\,\|f\|_{L^2(M)}\,, \qquad 2\le p\le p_d\,,\qquad
    p_d=\tfrac{2(d+1)}{d-1}\,,\rule{0pt}{15pt}
  \end{split}
\end{equation}
which are best possible at all $p$ for unit width spectral clusters.
Semiclassical generalizations were obtained by Koch-Tataru-Zworski
\cite{KTZ}.

The estimates \eqref{sogge} were extended to $C^{1,1}$ coefficients in
\cite{Sm1}.  On the other hand, examples constructed by Smith-Sogge
\cite{SmSo} show that for small $p$ they can fail for coefficients of
lesser H\"older regularity. In particular, for Lipschitz coefficients,
the following would in general be best possible
\begin{equation}\label{lipconj}
  \begin{split}
    \|u\|_{L^p(M)}&\le C\,\l^{d(\frac 12-\frac 1p)-\frac
      12}\,\|f\|_{L^2(M)}\,, \qquad \tfrac{2(d+2)}{d-1}\le
    p\le\infty\,,
    \\
    \|u\|_{L^p(M)}&\le C\,\l^{\frac{2(d-1)}{3}(\frac 12-\frac
      1p)}\,\|f\|_{L^2(M)}\,, \qquad 2\le p\le
    \tfrac{2(d+2)}{d-1}\,.\rule{0pt}{15pt}
  \end{split}
\end{equation}

The second estimate in \eqref{lipconj} was established in \cite{Sm} on
the range $2\le p\le p_d$, as well as the first for $p=\infty$.  That
proof proceeded by establishing the no-loss estimate \eqref{sogge} on
sets of diameter $\l^{-\frac 13}$, the scale on which the Lipschitz
coefficients can be suitably approximated by $C^2$ coefficients. This
scaling had been used in \cite{Tat} to prove Strichartz estimates with
loss for wave equations with Lipschitz coefficients, and examples
showing optimality of those estimates were constructed in
\cite{SmTa}. The same idea occurs in this paper in the $\l^{-\frac
  13}$ time scale expansion of $u$ in terms of simple tube solutions.
For metrics of H\"older regularity $C^s$ with $s<1$, the optimal
bounds, and corresponding examples, were obtained in \cite{KST1} for
$2\le p\le p_d$, as well as $p=\infty$. For $s<1$ there can occur
exponentially localized eigenfunctions, and as a result the $p=\infty$
bounds are strictly worse than in the case of Lipschitz coefficients.

Establishing optimal bounds for $p_d<p<\infty$ is significantly more
involved, since in this case it is no longer sufficient to prove
uniform bounds on $u$ over small sets. One needs in addition to bound
the possible energy overlap between such sets; that is, to consider
energy flow for rough equations on a scale where the coefficients are
not well approximated by $C^2$ functions.  The first advance in this
direction was made in \cite{KST2}, where for $d=2$ bounds were
obtained which, while not optimal, did improve upon those obtained by
interpolating the optimal bounds for $p=p_d$ and $p=\infty$. In a
related direction, the bounds \eqref{lipconj} with $d=2$ were
established for all $p$ by Smith-Sogge \cite{SmSo2} for smooth
Dirichlet forms on two-dimensional manifolds with boundary, with
either Dirichlet or Neumann boundary conditions, a setting where the
exponents of \eqref{lipconj} are the best generally possible by
\cite{Gr}.  Such a manifold is treated in \cite{SmSo2} as a special
case of a form with Lipschitz coefficients on a manifold without
boundary, by extending the coefficients evenly across the boundary in
geodesic normal coordinates.

The new method of this paper is to combine energy flow estimates for
Lipschitz coefficients with combinatorial arguments to carry out, in
essence, a worst case scenario analysis.  Except for the factors of
$\log\l$ for $6<p\le 8$, this suffices to obtain the sharp results in
two dimensions. To the best of our knowledge, this is the first
work where the sharp intermediate endpoint between $p_d$ and infinity
has been reached for any problem of this type.

For dimensions $d\ge 3$ our methods yield partial
results.  Precisely, we also prove in this paper the first estimate in
\eqref{lipconj} for $\frac{6d-2}{d-1}< p\le\infty$, leaving the case
of $p_d<p\le \frac{6d-2}{d-1}$ still open.

For most of this paper we focus for simplicity on the case of two
dimensions, where our results are strongest. We start in section
\ref{sec:argument} with the reduction of Theorem \ref{theorem1.1} to
two key propositions, one involving short time dispersive estimates
and the other long time energy overlap bounds. The short time
estimates, Proposition \ref{p:shorttime}, are established in sections
\ref{sec:shorttime} and \ref{sec:bilinear}, through a combination of
Strichartz and bilinear estimates. The energy overlap bounds,
Proposition \ref{p:bushcount}, are established in sections
\ref{sec:bushcount} and \ref{sec:wavepacket}, and depend on energy
propagation estimates for Lipschitz metrics. In section
\ref{sec:highdimension} we establish the estimates of \eqref{lipconj}
in dimensions $d\ge 3$ for $p$ in the aforementioned subset of the
conjectured range.  The limitation on $p$ there is due in part to both
the low order of localization in the energy propagation estimates and
to our use of only Strichartz estimates for $d\ge 3$.


\newsection{The argument}\label{sec:argument} In this section, we use
paradifferential arguments and a frame of ``tube solutions" to reduce
the proof of Theorem \ref{theorem1.1} to the two key results of this
paper, Propositions \ref{p:bushcount} and \ref{p:shorttime}.  Since
the estimate \eqref{qboundd} follows by interpolation from the case
$p=8$ and the known case $p=6$, we consider $8\le p \le \infty$.  To
avoid unnecessary repetition, we focus on the case $p=8$ in the
following steps.  At the end of this section we show how to deduce the
estimate \eqref{qboundc} for $p>8$ by a simple interpolation argument
in the last step of the proof for $p=8$.

The spectral localization of $u$, integration by parts, elliptic
regularity, and the equation, yield the following over $M$
\begin{equation}\label{ellipticest}
  \l^{-1}\|\dstar(\g \, du)+\l^2\rho u\|_{L^2}+\l^{-1}\|du\|_{L^2}+\l^{-2}\|d^2\! u\|_{L^2}
  +\l\|u\|_{H^{-1}}
  \lesssim \|u\|_{L^2}\,.
\end{equation}
By choosing a partition of unity subordinate to suitable local
coordinates, for $p=8$ we are then reduced to the following.

\begin{theorem}\label{l8theorem}
  Suppose that $\g$ and $\rho$ are globally defined on $\R^2$, with
$$
\|\g^{ij}-\delta^{ij}\|_{Lip(\R^2)}+\|\rho-1\|_{Lip(\R^2)}\le c_0 \ll
1\,.
$$
Then the following estimate holds for functions $u$ supported in the
unit cube of $\R^2$,
\begin{equation}
  \|u\|_{L^8(\R^2)} \lesssim \lambda^\frac14 (\log\l)^{\frac 32}\bigl(\|u\|_{L^2(\R^2)} + 
  \lambda^{-1}\| \dstar (\g \,du)+ \lambda^2\rho u\|_{L^2(\R^2)}\bigr)\,.
  \label{l8}\end{equation}
\end{theorem}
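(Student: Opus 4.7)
The plan is to combine paradifferential smoothing with a half-wave parametrix on the natural time scale $\l^{-1/3}$ dictated by the Lipschitz hypothesis, decompose $u$ into a frame of ``tube solutions,'' and then sum over tubes using the two cited propositions as black boxes. First I would truncate $\g$ and $\rho$ at frequency $\l^{2/3}$, replacing them by approximants whose second derivatives are $O(\l^{1/3})$; the Lipschitz bound ensures that the error produced in $\dstar(\g\,d\cd)+\l^2\rho\,\cd$ when passing to the smoothed coefficients can be absorbed into the right-hand side of \eqref{l8} via \eqref{ellipticest}. A Littlewood--Paley decomposition combined with \eqref{ellipticest} then reduces us to $u$ microlocalized to frequency $\sim\l$ on the characteristic set of the smoothed operator.

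Next I would view $u$ as the time-zero datum of a half-wave evolution for the smoothed symbol and propagate it over a time interval $I$ of length $\sim\l^{-1/3}$---the scale on which the smoothed coefficients are effectively $C^2$. A wave-packet / FBI parametrix construction yields an almost-orthogonal decomposition
\[
u=\sum_T c_T\,u_T,\qquad \sum_T|c_T|^2\lesssim \|u\|_{L^2}^2,
\]
where each $u_T$ is a curved wave packet concentrated in a spacetime tube along a bicharacteristic of the smoothed symbol, of spatial width $\l^{-1/2}$ and time length $\l^{-1/3}$.

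To bound $\|u\|_{L^8}$ I would organize the tubes dyadically by the overlap count $N(x)=\#\{T:x\in T\}$ into ``bushes,'' and on each dyadic slab apply Proposition \ref{p:shorttime} to obtain the short-time dispersive (Strichartz/bilinear) bound on a bounded sum of packets, together with Proposition \ref{p:bushcount} to control the measure of the set where the overlap count is $\sim N$. The exponents in $\l$ coming from the two propositions are arranged to cancel exactly, producing the sharp power $\l^{1/4}$ at $p=8$; the critical balance makes the resulting dyadic sum only logarithmically convergent, and a counting over three nested dyadic parameters (bush size $N$, spatial/time scale, and angular parameter) yields the stated $(\log\l)^{3/2}$ loss. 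For $p>8$ the same decomposition applies but the dyadic sum acquires a strictly positive geometric rate in $N$, so it converges without any log loss, giving \eqref{qboundc} at the end by interpolation-in-the-sum rather than interpolation of the final inequalities.

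The main obstacle is this exponent-matching step: the powers of $\l$ on the dispersive side and the combinatorial side must match exactly, and because we only have Lipschitz coefficients the tubes are curved bicharacteristics of a smoothed symbol with only modest regularity, so the ``bush'' geometry is delicate. The heavy analytic work, however, is packaged into Propositions \ref{p:shorttime} and \ref{p:bushcount}, so the present reduction is chiefly combinatorial bookkeeping.
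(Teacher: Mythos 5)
You have the right global strategy in outline (tube decomposition on $\l^{-\frac13}$ time slices, then the two propositions), but your first reduction contains a genuine gap. If you smooth the coefficients themselves at frequency $\l^{\frac23}$, then $\|\g-\g_{\l^{2/3}}\|_{L^\infty}\lesssim\l^{-\frac23}$ and $\|\nabla(\g-\g_{\l^{2/3}})\|_{L^\infty}\lesssim 1$, so using \eqref{ellipticest} the error you must absorb satisfies only $\l^{-1}\|\dstar((\g-\g_{\l^{2/3}})\,du)+\l^2(\rho-\rho_{\l^{2/3}})u\|_{L^2}\lesssim\l^{\frac13}\|u\|_{L^2}$, which is far too large to be put on the right-hand side of \eqref{l8}. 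The paper truncates the coefficients only at frequency $c\l$ (an $O(\l^{-1})$ error in $L^\infty$, hence admissible), reduces to the exact half-wave flow of $a^w_\l$, and introduces the $\l^{\frac23}$-smoothed symbol $a_{\l^{2/3}}$ solely to construct the tube solutions $v_T$; the discrepancy $a_\l-a_{\l^{2/3}}\in\l^{\frac13}S_{\l,\l}$ is never treated as a negligible perturbation but is precisely what makes the frame coefficients $c_T(t)$ time dependent, quantified by \eqref{cTbounds2}, and the dyadic decomposition in amplitude $a$ and interval length $2^{-k}\l^{-\frac13}$ of Step 3 exists to control it.

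Because of this, your decomposition $u=\sum_T c_T u_T$ with constant coefficients, organized only by an overlap count $N(x)$, does not interface with the propositions you invoke as black boxes: Propositions \ref{p:shorttime} and \ref{p:bushcount} are stated for the $(a,k)$-packets $u_{a,k}$ (amplitude $\approx a$ on intervals of length $2^{-k}\l^{-\frac13}$) and the density sets $A_{a,k,m}$, and their use requires $\|u_{a,k}\|_{L^2}\lesssim 2^{-k}$ and $|c_{T,j}|\approx a$, neither of which appears in your setup. Two further mismatches: the packets must live at spatial width $\l^{-\frac23}$ (the $\l^{\frac23}$-scaled Gabor frame), not $\l^{-\frac12}$ --- the bounds $2^m\lesssim\l^{\frac13}$, the weights $\chi_T$ at scale $\l^{\frac23}$, and the bush counting all hinge on that scale, which is the one matched to the $\l^{-\frac13}$ time slabs; and the log bookkeeping differs from what you describe: one full factor of $\log\l$ comes from the number of dyadic amplitudes $a$, and the remaining $(\log\l)^{\frac12}$ from the cubed logarithm in Proposition \ref{p:bushcount} together with the at most $\log\l$ values of $m$, while the low-density region $\chi_{a,k}\le1$ is handled separately by interpolating the pointwise bound with the $L^2$ bound. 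None of this is fatal to the strategy, but as written the smoothing step fails and the propositions cannot be applied to your decomposition.
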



{\bf Step 1:} {\em Reduction to a frequency localized first order
  problem.}  In proving Theorem \ref{l8theorem} we may replace the
function $\g$ by $\g_\l$, where $\g_\l$ is obtained by smoothly
truncating $\hat \g(\xi)$ to $|\xi|\le c\l$, $c$ some fixed small
constant.  Since
$$
\|\g-\g_\l\|_{L^\infty(\R^2)}\lesssim \l^{-1}\,,\qquad
\|\nabla(\g-\g_\l)\|_{L^\infty(\R^2)} \lesssim 1\,,
$$
the right hand side of \eqref{l8} is comparable to the same quantity
after this replacement.  Similarly we replace $\rho$ by $\rho_\l$.

By the Coifman-Meyer commutator theorem \cite{CM} (see also
\cite[Prop. 3.6.B]{Tay}), the commutator of $\g_\l$ or $\rho_\l$ with
a multiplier $\Gamma(D)$ of type $S^0$ maps $H^s\rightarrow H^{s+1}$
for $-1\le s\le 0$, so we may take a conic partition of unity to
reduce matters to establishing \eqref{l8} with $u$ replaced by
$\Gamma(D)u$, with $\Gamma(\xi)$ supported where $|\xi_2|\le
c\,\xi_1$. This step loses compact support of $u$, but we may still
take the $L^8$ norm over the unit cube. Finally, arguments as in
\cite[Corollary 5]{Sm} reduce matters to considering $\hat u(\xi)$
supported where $|\xi|\approx\l$.

We now label $x_1=t$, and $x_2=x$, and let $(\tau,\xi)$ be the dual
variables to $(t,x)$.  Thus, with $c$ above small, $\hat u(\tau,\xi)$
is supported where $\{|\xi|\le \frac 12\l\,,\tau\approx\l\}$.  For
$|\xi|\le \tfrac 34\l$ we can factor
$$
-\g_\l(t,x)\cdot(\tau,\xi)^2+\l^2\rho_\l(t,x)=-\g^{00}_\l(t,x)\bigl(\tau+\tilde
a(t,x,\xi,\l)\bigr) \bigl(\tau-a(t,x,\xi,\l)\bigr)\,,
$$
where $\tilde a\,,a>0$, and both belong to $\l C^1S_{\l,\l}$ (on the
interval of $\xi$ where they are defined) according to the following
definition.

\begin{definition}
  Let $b(t,x,\xi,\l)$ be a family of symbols in $(x,\xi)$ depending on
  parameters $t$ and $\l$.  We say $b(t,x,\xi,\l)\in
  S_{\l,\l^{\delta}}$ if, for all multi-indices $\alpha,\beta$,
$$
|\partial_{t,x}^\alpha\partial_\xi^\beta b(t,x,\xi,\l)|\le
C_{\alpha,\beta}\l^{-|\beta|+\delta|\alpha|}\,,
$$
where the constants $C_{\alpha,\beta}$ depend only on $\alpha$ and
$\beta$.

We say $b(t,x,\xi,\l)\in C^1S_{\l,\l^\delta}$
if 
the stronger estimate holds
$$
|\partial_{t,x}^\alpha\partial_\xi^\beta b(t,x,\xi,\l)|\le
C_{\alpha,\beta}\l^{-|\beta|+\delta\max(0,|\alpha|-1)}\,.
$$
We write $b\in \l^m S_{\l,\l^{\delta}}$ to indicate $\l^{-m}b\in
S_{\l,\l^\delta}$.
\end{definition}

In our applications either $\delta=1$ or $\delta=\frac 23$, and we
suppress the explicit $\l$ when writing $b$.  The symbols $b\in
S_{\l,\l}$ are simply a bounded family of $S^0_{0,0}(\R^2,\R)$ symbols
rescaled by $(t,x,\xi)\rightarrow (\l t,\l x,\l^{-1}\xi)$, so that
$L^2(\R)$ boundedness of $b(t,x,D)$, as well as the Weyl quantization
$b^w(t,x,D)$, follows (with uniform bounds in $t$ and $\l$) by
\cite{weyl}.  For symbols in $C^1S_{\l,\l^\delta}$, the asymptotic
laws for composition and adjoint hold to first order. In particular,
if $a\in C^1S_{\l,\l^\delta}$ then
$$
a^w(t,x,D)= a(t,x,D)+r(t,x,D)\,,\quad r\in \l^{-1}S_{\l,\l^\delta}\,,
$$
and if $a\in S_{\l,\l^\delta}$, $b\in C^1S_{\l,\l^\delta}$, then
$$
a(t,x,D)b(t,x,D)=(ab)(t,x,D)+r(t,x,D)\,,\quad r\in
\l^{-1}S_{\l,\l^\delta}\,.
$$

Let $a_\l(t,x,\xi)$ be obtained by smoothly truncating the
$(t,x)$-Fourier transform of $a(\cdot\,,\cdot\,,\xi,\l)$ to
frequencies less than $c\l$. Then since $a\in \l C^1 S_{\l,\l}$,
$$
a(t,x,\xi,\l)-a_\l(t,x,\xi)\in S_{\l,\l}\,.
$$
The symbol $a_\l(t,x,\xi)$ inherits from $a(t,x,\xi,\l)$ the
estimates, for $|\xi|\le\frac 34\l$,
\begin{equation}\label{acond1}
  a_\l(t,x,\xi)\approx \l\,,\qquad \partial_\xi^2 a_\l(t,x,\xi)\approx -\l^{-1}\,.
\end{equation}
We now extend $a_\l$ globally in $\xi$ so that $a_\l(t,x,\xi)=\l$ for
$|\xi|\ge(\frac 34-c)\l$, and $a_\l$ is unchanged for $|\xi|\le \frac
58 \l$, while maintaining the frequency localization in the $(t,x)$
variables.  In particular, \eqref{acond1} holds for $|\xi|\le\frac
58\l$.  A similar observation holds for $\tilde a$.

By the above, we have the factorization over $|\xi|\le \frac 58\l$,
$$
\dstar\g_\l\,d+\l^2\rho_\l= \g^{00}_\l\bigl( D_t+\tilde
a^w_\l(t,x,D)\bigr)\bigl(D_t-a^w_\l(t,x,D)\bigr)+ r(t,x,D)\,,
$$
where $r\in \l S_{\l,\l}$. Since $a^w_\l(t,x,D)u$ is supported where
$\tau\approx\l$, and $D_t+a_\l(t,x,D)$ admits a parametrix in
$\l^{-1}S_{\l,\l}$ there, we have thus reduced Theorem \ref{l8theorem}
to establishing the following estimate over $(t,x)\in [0,1]\times\R$,
\begin{equation*}
  \|u\|_{L^8([0,1]\times\R)} \lesssim 
  \lambda^\frac14 (\log\l)^{\frac 32}\bigl(\,\|u\|_{L^2([0,1]\times\R)} + 
  \| (D_t - a_\l^w(t,x,D))u\|_{L^2([0,1]\times\R)}\bigr)\,.
\end{equation*}

We denote by $S(t,s)$ the evolution operators for $a^w_\l(t,x,D)$,
which are unitary on $L^2(\R)$.  Precisely, $u(t,x)=S(t,t_0)f$
satisfies
\begin{equation}
  \bigl(D_t - a_\l^w(t,x,D)\bigr) u = 0 \,, \qquad u(t_0,\cdot\,) = f\,.
  \label{directional-hom}
\end{equation}
If $\hat f$ is supported in $|\xi|\le \frac 34\l$, then so is $\hat
u(t,\cdot\,)$, since $a_\l=\l$ for $|\xi|\ge (\frac 34-c)\l$, and
$a_\l$ is spectrally localized in $x$ to the $c\l$ ball.  By the
Duhamel formula, it then suffices to prove that
\begin{equation*}
  \|u\|_{L^8([0,1]\times\R)} \lesssim \lambda^\frac14 (\log\l)^{\frac 32}\|u_0\|_{L^2(\R)}\,,\qquad
  u=S(t,0)u_0\,,
\end{equation*}
with $\widehat u_0$ supported in $|\xi|\le \frac 34\l$, and
$a_\l(t,x,\xi)$ as above.

Henceforth, we will take $\|u_0\|_{L^2(\R)}=1$.


{\bf Step 2:} {\em Decomposition in a wave packet frame on $\l^{-\frac
    13}$ time slices.}  Let $a_{\l^{2/3}}(t,x,\xi)$ be obtained by
smoothly truncating the $(t,x)$-Fourier transform of
$a_\l(\cdot\,,\cdot\,,\xi)$, or equivalently that of $a(t,x,\xi,\l)$,
to frequencies less than $c\l^{\frac 23}$. Then $a_{\l^{2/3}}\in \l
S_{\l,\l^{2/3}}$, and $a_{\l^{2/3}}$ also satisfies \eqref{acond1},
for $|\xi|\le \frac 58\l$ in case of the second estimate in
\eqref{acond1}.

We divide the time interval $[0,1]$ into subintervals of length
$\l^{-\frac13}$, and thus write $[0,1]\times \R$ as a union of slabs
$[l\l^{\frac 13},(l+1)\l^\frac13]\times\R$.  Within each such slab we
will consider an expansion of $u$ in terms of homogeneous solutions
for $D_t-a^w_{\l^{2/3}}(t,x,D)$, We refer to the homogeneous solutions
on each $\l^{-\frac 13}$ time interval as \textit{tube solutions},
since they will be highly localized to a collection of tubes $T$.

The collection of tubes is indexed by triples of integers $T=(l,m,n)$,
with $0\le l\le \l^{\frac 13}$ referencing the slab $[l\l^{\frac
  13},(l+1)\l^\frac13]\times\R$.  We will describe the construction
for the slab $[0,\l^{-\frac 13}]\times\R$; the tube solutions
supported on the other slabs are obtained in an identical manner.
Thus, $T$ is here identified with a pair $(m,n)\in\Z^2$.

We start with a $\l^{\frac 23}$-scaled Gabor frame on $\R$, with
compact frequency support. That is, we select a Schwartz function
$\phi$, with $\hat\phi$ supported in $|\xi|\le \frac 98$, such that
for all $\xi$
$$
\sum_{n\in\Z}|\hat\phi(\xi-2n)|^2=1\,.
$$
It follows that, with $x_T=\l^{-\frac 23}m$ and $\xi_T=2\l^{\frac
  23}n$, the space-frequency translates
$$
\phi_T(x)=\l^{\frac 13} e^{ix\xi_T}\phi(\l^{\frac 23}(x-x_T))
$$
form a tight frame, in that for all $f\in L^2(\R)$,
$$
f=\sum_{m,n} c_T\phi_T\,,\quad\text{where}\quad c_T=\int
\overline{\phi_T(x)}f(x)\,dx\,,
$$
from which it follows that
$$
\|f\|^2_{L^2(\R)}=\sum_T |c_T|^2\,.
$$
Since the function $f$ in our application will be frequency restricted
to $|\xi|\le\l$, the index $\xi_T$ will only run over $|n|\le
\l^{\frac 13}$, by the compact support condition on $\hat\phi$.

The frame is not orthogonal, so it is not necessarily true, for
arbitrary coefficients $b_T$, that $\|\sum b_T
\phi_T\|_{L^2(\R)}^2\approx \sum |b_T|^2$. However, since the
functions $\widehat\phi_T$ have almost disjoint support for different
$\xi_T$, this does hold if the sum is over a collection of $T$ for
which the corresponding $\xi_T$ are distinct,
\begin{equation}\label{Gaborsum}
  \Bigl\|\sum_{T\in\Lambda} b_T \phi_T \Bigr\|_{L^2(\R)}^2\approx \sum_{T\in\Lambda} |b_T|^2 
  \quad \text{if}\quad
  \xi_T\ne\xi_{T'} \quad\text{when}\quad T\ne T'\in\Lambda\,.
\end{equation}

Let $v_T$ denote the solution to
$$
\bigl(D_t - a_{\l^{2/3}}^w(t,x,D)\bigr) v_T = 0 \,, \qquad
v_T(0,\cdot\,) = \phi_T\,.
$$
We define $x_T(t)$ by
$$
x_T(t)=x_T-t\,\partial_\xi a_{\l^{2/3}}(0,x_T,\xi_T)\,.
$$
For $t\in[0,\l^{-\frac 13}]$ the function $v_T(t,\cdot)$ is a
$\l^{\frac 23}$-scaled Schwartz function with frequency center
$\xi_T$, and spatial center $x_T(t)$, where the envelope function
satisfies uniform Schwartz bounds over $t$.  This follows, for
example, by Theorem \ref{tubeloc} or \cite[Proposition 4.3]{KT}.
Thus, $v_T$ is localized, to infinite order in $x$, to the following
tube, which we also refer to as $T$,
$$
T=\{(t,x):|x-x_T(t)|\le\l^{-\frac 23},\;t\in [0,\l^{-\frac 13}]\}\,.
$$

Since the $a^w_{\l^{2/3}}(t,x,D)$ flow is unitary, for each $t$ the
functions $\{v_T(t,\cdot\,)\}$ form a tight frame on $L^2(\R)$. On
each $\l^{-\frac 13}$ time slab we can thus expand
$$
u(t,x)=\sum_T c_T(t)\,v_T(t,x)\,, \qquad c_T(t)=\int
\overline{v_T(t,x)}\,u(t,x)\,dx\,.
$$
Differentiating the equation, we see that
$$
c_T'(t)=i\int
\overline{v_T(t,x)}\,\bigl(a_\l^w(t,x,D)-a^w_{\l^{2/3}}(t,x,D)\bigr)u(t,x)\,
dx\,.
$$
Since $a_\l(t,x,\xi)-a_{\l^{2/3}}(t,x,\xi)\in \l^{\frac 13}S_{\l,\l}$,
and $\|u_0\|_{L^2}=1$, we then have uniformly for $t\in[0,\l^{-\frac
  13}]$,
\begin{equation*}
  \sum_{m,n} |c_T(t)|^2\lesssim 1 \,,
  \qquad
  \sum_{m,n} |c'_T(t)|^2\lesssim \l^{\frac 23}\,,
\end{equation*}
which together imply the following bounds, that will then hold
uniformly on each $\l^{-\frac 13}$ time slab,
\begin{equation}\label{cTbounds2}
  \sum_{T:l=l_0} \;\|c_T\|^2_{L^\infty} + \lambda^{-\frac13}\| c'_T\|^2_{L^2} 
  \lesssim 1\,. 
\end{equation}
We apply this expansion separately to the solution $u$ on each
$\l^{-\frac 13}$ time slab, and obtain the full tube decomposition
$u=\sum_T c_T v_T$, where if $T=(l,m,n)$, the functions $c_T$ and
$v_T$ are supported by $t\in I_T\equiv [l\l^{-\frac
  13},(l+1)\l^{-\frac 13}]$.


{\bf Step 3:} {\em Interval decomposition according to packet size.}
Here, given a coefficient $c_T$, we partition the time interval $I_T$
into smaller dyadic subintervals where the coefficient $c_T$ is
essentially constant.  This is done according to the following lemma.
\begin{lemma}\label{cTlemma}
  Let $c:I \to \C$ with
  \[
  \| c\|^2_{L^\infty(I)} + |I|\cdot\|c'\|^2_{L^2(I)} = B\,.
  \]
  Given $\epsilon>0$, there is a partition of $I$ into dyadic
  sub-intervals $I_j$, for each of which either
  \begin{equation}
    \| c\|^2_{L^\infty(I_j)} \geq 4 |I_j| \cdot \| c'\|^2_{L^2(I_j)}
    \quad\text{or}\quad \| c\|_{L^\infty(I_j)}<\epsilon  \,.
    \label{iselect}
  \end{equation}
  Independent of $\epsilon$, the following bound holds
  \[
  \sum_{j} |I_j|^{-1} \| c\|_{L^\infty(I_j)}^2 \le 16B|I|^{-1}\,.
  \]
\end{lemma}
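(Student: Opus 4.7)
The plan is to prove the lemma by a stopping-time construction of the partition followed by an induction on the resulting tree.

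For the construction, I would start with $\{I\}$ and, at each step, replace any interval in the current collection that fails both (1) and (2) by its two dyadic halves. Termination is automatic: if $J$ fails both conditions, then $\|c\|_{L^\infty(J)} \ge \epsilon$ combined with $\|c\|_{L^\infty(J)}^2 < 4|J|\|c'\|_{L^2(J)}^2$ forces $|J| > \epsilon^2/(4\|c'\|_{L^2(I)}^2)$, so intervals cannot shrink past this positive threshold and the procedure halts in finitely many steps, producing the desired partition $\{I_j\}$.

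The central per-leaf estimate comes from the parent condition: for any non-root piece $I_j$ with parent $\hat I_j$, the parent was split and so failed (1), giving
\[
\|c\|_{L^\infty(I_j)}^2 \le \|c\|_{L^\infty(\hat I_j)}^2 < 4|\hat I_j|\,\|c'\|_{L^2(\hat I_j)}^2 = 8|I_j|\,\|c'\|_{L^2(\hat I_j)}^2,
\]
whence $|I_j|^{-1}\|c\|_{L^\infty(I_j)}^2 \le 8\|c'\|_{L^2(\hat I_j)}^2$. With this in hand I would prove the summation bound by induction on the tree, establishing a local estimate of the form
\[
\sum_{I_j \subset J} |I_j|^{-1}\|c\|_{L^\infty(I_j)}^2 \le C_1\,|J|^{-1}\|c\|_{L^\infty(J)}^2 + C_2\,\|c'\|_{L^2(J)}^2
\]
for every dyadic $J \subset I$ in the tree, with $C_1$ and $C_2$ calibrated so that applying it at $J = I$ yields the desired bound $\le 16 B/|I|$.

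The main obstacle will be closing this induction with constants independent of both $\epsilon$ and the depth of the tree. Summing the two children bounds at an internal node $J$ produces a coefficient $4C_1$ rather than $C_1$ on the $|J|^{-1}\|c\|_{L^\infty(J)}^2$ term, since $\|c\|_{L^\infty(J_1)}^2 + \|c\|_{L^\infty(J_2)}^2 \le 2\|c\|_{L^\infty(J)}^2$ while $|J_i| = |J|/2$. This excess must be recovered by invoking the internal-node inequality $\|c\|_{L^\infty(J)}^2 < 4|J|\|c'\|_{L^2(J)}^2$ to trade the offending $\|c\|_{L^\infty(J)}^2/|J|$ term for additional $\|c'\|_{L^2(J)}^2$ at the parent scale. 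Because the ancestors at each fixed dyadic level are pairwise disjoint, the accumulated $\|c'\|^2$ contributions telescope to a bounded multiple of $\|c'\|_{L^2(I)}^2$ rather than growing with the tree depth; a careful choice of $C_1$ and $C_2$ (aiming at $C_1 \le 4$ and $C_2 \le 12$) then yields the advertised universal constant $16$.
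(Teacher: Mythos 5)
Your stopping-time construction, the termination argument, and the per-leaf estimate $|I_j|^{-1}\|c\|_{L^\infty(I_j)}^2\le 8\|c'\|_{L^2(\hat I_j)}^2$ are correct and agree with the paper's proof (the paper runs the same bisection and then sums the per-parent bound using an overlap count of the parent intervals, rather than a tree induction). The gap is in your summation step. The induction with a single fixed pair $(C_1,C_2)$ does not close: as you yourself compute, combining the two children at an internal node $J$ gives $4C_1|J|^{-1}\|c\|_{L^\infty(J)}^2+C_2\|c'\|_{L^2(J)}^2$, and trading the excess $3C_1|J|^{-1}\|c\|_{L^\infty(J)}^2$ via the failed test replaces $C_2$ by $C_2+12C_1$. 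This happens at \emph{every} internal generation, and the disjointness of ancestors at a fixed dyadic level does not help, because the accumulation occurs along a single nested chain $I\supset A_2\supset A_3\supset\cdots$ (one ancestor per level, all containing the deep end of the chain): the derivative mass sitting at the bottom of such a ``staircase'' tree --- each internal node having one leaf child and one internal child --- gets charged once per generation, so the constant produced by your scheme grows linearly in the depth. This staircase configuration is precisely the case the argument must handle, and it is not handled.

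Moreover the failure is not just bookkeeping: any argument that uses only (i) monotonicity of $\|c\|_{L^\infty}$ under inclusion, (ii) additivity of $\|c'\|_{L^2}^2$ over disjoint intervals, and (iii) the pass/fail inequalities at leaves and internal nodes --- which is all your induction invokes --- cannot prove the lemma. Indeed, on a staircase tree of depth $r$ one can assign abstract values $S(J)$ and $\mu(J)$ satisfying (i)--(iii): let $\mu$ be a unit point mass at the left endpoint of $I$, put $S(A_i)^2=2^{-i+2}$ on the ancestors $A_i$ of length $2^{-i+1}$ (so each $A_i$ fails \eqref{iselect}), and give each sibling leaf the same $S$ as its parent (it passes \eqref{iselect} trivially since it carries no $\mu$); then $\sum_j|I_j|^{-1}S(I_j)^2\approx 4r$ while $S(I)^2+\mu(I)=3$. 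No actual function $c$ realizes this, but only because of the ingredient you never use: continuity of $c$ across the shared endpoint of the two halves of a split interval, i.e. $\|c\|_{L^\infty(L)}\le\|c\|_{L^\infty(L')}+|L|^{1/2}\|c'\|_{L^2(L)}$ for adjacent halves $L,L'$. With that input one can telescope the sups down a chain of ancestors against the derivative mass on the pairwise disjoint sibling halves and sum with a geometric gain (Cauchy--Schwarz against the kernel $2^{-|i-s|/2}$), which is exactly the step missing from your write-up; as it stands, the proposal does not prove the summation bound.
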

\begin{proof}
  If the test \eqref{iselect} holds on $I$ then no partition is
  needed.  Otherwise we divide the interval in half and retest. The
  test automatically is true if $|I_j|\le \epsilon^2B^{-1}|I|/4$. The
  sum bound then holds by comparing the sum to the $L^2$ norm of $c'$
  over the parent intervals of the $I_j$, which have overlap at most
  2.
\end{proof}

We will take $\epsilon$ to be $\l^{-\frac 13}$.  For each $T$, this
gives a finite partition of its corresponding time interval $I_T$ into
dyadic subintervals, 
\[
I_T = \bigcup_j I_{T,j}
\]
so that \eqref{iselect} holds for $c_T$ in each subinterval,
\begin{equation}
  \| c_T\|^2_{L^\infty(I_{T,j})} \geq 4 |I_{T,j}|\cdot \| c'_{T}\|^2_{L^2(I_{T,j})} \quad
  \text{or}\quad  \| c_T\|_{L^\infty(I_{T,j})} < \l^{-\frac 13}\,,
  \label{iselecta}\end{equation}
and we have the square summability relation
\begin{equation}\label{cTbounds3}
  \sum_j \lambda^{-\frac13}
  |I_{T,j}|^{-1} \| c_T \|_{L^\infty(I_{T,j})}^2 \lesssim 
  \|c_T\|^2_{L^\infty} + \lambda^{-\frac13}\| c'_T\|^2_{L^2}\,.
\end{equation}
We introduce the notation
$$
c_{T,j}=1_{T,j}c_T\,,\qquad c'_{T,j}=1_{T,j}c'_T\,.
$$
Using these interval decompositions, we partition the function $u$ on
$[0,1]\times\R$ into a dyadically indexed sum
\begin{equation}\label{udecomp}
  u = \sum_{a \leq 1} \sum_{k \ge 0} u_{a,k}+u_\epsilon\,,
\end{equation}
where the index $a$ runs over dyadic values between
$\epsilon=\l^{-\frac 13}$ and $1$, and
\[
u_{a,k} = \sum_{(T,j) \in \T_{a,k}} c_{T,j} v_T\,,
\]
with
\[
\T_{a,k} = \{ (T,j):|I_{T,j}| = 2^{-k}\lambda^{-\frac13}, \
\|c_T\|_{L^\infty(I_{T,j})}\in (a,2a] \,\}\,.
\]
We call the functions $u_{a,k}$ above $(a,k)$-packets, and note that,
as the first condition in \eqref{iselecta} holds if $(T,j)\in
\T_{a,k}$ since $a\ge\l^{-\frac 13}$, then
\begin{equation*}
  \frac a4\le |c_T(t)|\le a\,,\quad t\in I_{T,j}\,.
\end{equation*}
We will separately bound in $L^8$ each of the functions $u_{a,k}$.
Since there are at most $\l^{\frac 13}$ tubes $T$ over any point, and
$|v_T|\lesssim\l^{\frac 13}$, we see that
$\|u_\epsilon\|_{L^\infty}\lesssim\l^{\frac 13}$.  On the other hand,
since the decomposition \eqref{udecomp} is almost orthogonal, we have
$\|u_\epsilon\|_{L^2}\lesssim 1$, and hence
$\|u_\epsilon\|_{L^8}\lesssim\l^{\frac 14}$, as desired. For $p>8$ the
bounds on $u_\epsilon$ are even better than needed.

We note here that, by \eqref{cTbounds2} and \eqref{cTbounds3},
$$
\sum_{(T,j)\in\T_{a,k}} \|c_{T,j}\|_{L^2}^2\lesssim 2^{-2k}\,,
$$
hence $\|u_{a,k}\|_{L^2([0,1]\times\R)}\lesssim 2^{-k}$.  \medskip


{\bf Step 4:} {\em Localization weights and bushes.}  To measure the
size of each packet $v_T$ we introduce a bump function in
$I_T\times\R$, namely
\[
\chi_T(t,x) = 1_{I_T}(t)\,\bigl(\,1+\lambda^{\frac23}
|x-x_{T}(t)|\,\bigr)^{-2}\,.
\]
To measure the local density of $(a,k)$-packets we introduce the
function
\[
\chi_{a,k} = \sum_{(T,j) \in \T_{a,k}} 1_{I_{T,j}} \chi_{T}\,.
\]
We note that
\[
|v_T| \lesssim \lambda^{\frac13} \chi_{T}\,,
\]
therefore we have the straightforward pointwise bound
\begin{equation*}
  |u_{a,k}| \lesssim \lambda^{\frac13} a \,\chi_{a,k}\,.
\end{equation*}
This suffices in the low density region
\[
A_{a,k,0} = \{ \chi_{a,k} \le 1 \}\,,
\]
as interpolating the above pointwise bound with the above estimate $\|
u_{a,k}\|_{L^2} \lesssim 2^{-k} $, we obtain
\begin{equation*}
  \| u_{a,k}\|_{L^8(A_{a,k,0})} \lesssim  \lambda^{\frac14} a^{\frac34} 2^{-\frac k4}\,.
\end{equation*}
We may sum over $k\ge 0$ and $a\le 1$ to obtain the desired $L^8$
bound without log factors.  For $p>8$ the resulting bound is even
better than needed.

To obtain bounds over sets where $\chi_{a,k}$ is large, we need to
consider how the solution $u$ behaves on regions larger than a single
$\l^{-\frac 13}$ slab, in addition to more precise bounds within each
slab.


{\bf Step 5:} {\em Concentration scales and bushes.}  Here we
introduce a final parameter $m\ge 1$ which measures the dyadic size of
the packet density. Precisely, we consider the sets
\[
A_{a,k,m} = \{ (t,x)\in [0,1]\times\R: 2^{m-1}< \chi_{a,k}(t,x) \le
2^m \}\,.
\]
The points in $A_{a,k,m}$ are called $(a,k,m)$-bush centers, since as
shown in the next section they correspond to the intersection at time
$t$ of about $2^m$ of the packets comprising $u_{a,k}$.

We remark that by fixed-time $L^2$ bounds on $u$, and tube overlap
considerations, the parameter $m$ must satisfy
\begin{equation}
  2^m a^2 \lesssim 1\,, \qquad 2^m \lesssim \lambda^{\frac13}\,.
  \label{msize}\end{equation}

Our goal will be to bound $\|u_{a,k}\|_{L^8( A_{a,k,m})}$.  Two
considerations guide the proof of our bound.

We first note that a collection of $2^m$ tubes that overlap at a
common time $t$, which we call a $2^m$-bush, can retain full overlap
for time $\delta t = 2^{-m}\l^{-\frac 13}$.  For this to happen the
tubes in the bush must have close angles; if the bush is more spread
out then the overlap time decreases.

On the other hand, for certain Lipschitz metrics like the examples in
\cite{SmSo} and \cite{SmTa}, a focused $2^m$-bush may come back
together after time $\delta t = 2^m\l^{-\frac 13}$.  This indicates
that beyond this scale our only available tool is summation with
respect to the number of such time intervals. Indeed, for each $m$ the
$L^8$ estimate \eqref{l8} is saturated (except for the factors of
$\log\l$) by such a periodically repeating $2^m$-bush.


Given these considerations, we decompose the unit time interval
$[0,1]$ into a collection $\I_m$ of intervals of size $\delta t =
2^{-m}\l^{-\frac 13}$; such intervals are then dyadic subintervals of
the decomposition into $\l^{-\frac 13}$ time slices made in step 2.
The proof of Theorem \ref{l8theorem} is concluded using the following
two propositions. The first one counts how many of these slices may
contain $(a,k,m)$-bushes.

\begin{proposition}
  There are at most $\approx\lambda^{\frac13} 2^{-3m}
  a^{-4}\bigl\langle\log (2^{m} a^{2})\bigr\rangle^3 $ intervals
  $I\in\I_m$ which intersect $ A_{a,k,m}$.
  \label{p:bushcount}\end{proposition}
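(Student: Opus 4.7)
The plan is to count bush intervals by combining an a priori bound on the number of $(a,k)$-packets with a pairwise intersection estimate for bushes, refined through cascading dyadic decompositions to recover the exponents $2^{-3m}a^{-4}$.

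First, for each $I\in\I_m$ meeting $A_{a,k,m}$, I would extract a bush $B_I\subset\T_{a,k}$ by fixing a point $(t_I,x_I)\in I\times\R$ with $\chi_{a,k}(t_I,x_I)>2^{m-1}$ and collecting those $(T,j)$ with $t_I\in I_{T,j}$ and $|x_T(t_I)-x_I|\lesssim \l^{-\frac23}$. The Schwartz decay of $\chi_T$ together with the lower bound on $\chi_{a,k}$ yields $|B_I|\gtrsim 2^m$. Since distinct packets sharing a common frequency $\xi_T$ travel along parallel lines with spacing $\ge\l^{-\frac23}$, at most $O(1)$ packets per frequency slot can appear in $B_I$, so the $\xi_T$ for $(T,j)\in B_I$ fill a frequency window of width $\gtrsim 2^m\l^{\frac23}$.

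The key geometric input is a pairwise intersection bound: for two bushes in a common $\l^{-\frac13}$ slab with time-separation $\tau=|t_I-t_{I'}|\geq 2^{-m}\l^{-\frac13}$, a packet common to $B_I$ and $B_{I'}$ must satisfy both near-coincidence conditions, and the linear evolution $x_T(t)\approx x_T-t\,\partial_\xi a(0,x_T,\xi_T)$ then pins $\xi_T$ to a window of width $\lesssim \l^{\frac13}/\tau$, so $|B_I\cap B_{I'}|\lesssim (\tau\l^{\frac13})^{-1}$. Making this rigorous for the actual $v_T$ rather than the idealized straight-tube model is where the tube localization referenced in Theorem \ref{tubeloc} and the Lipschitz energy propagation estimates of section \ref{sec:wavepacket} enter; this is the step I expect to be the main obstacle, the remainder being essentially combinatorial.

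Given the pair bound, I would combine it with the a priori count $|\T_{a,k}|\lesssim \l^{\frac13}(2^k a^2)^{-1}$ from \eqref{cTbounds2}--\eqref{cTbounds3} and the trivial observation that each packet of lifetime $2^{-k}\l^{-\frac13}$ participates in at most $\langle 2^{m-k}\rangle$ bushes. The count proceeds by a nested Cauchy--Schwarz: estimate $\sum_{I,I'}|B_I\cap B_{I'}|$ in two ways, on the one hand via
\[
\sum_{I,I'}|B_I\cap B_{I'}|=\sum_{(T,j)}\bigl|\{I:(T,j)\in B_I\}\bigr|^2\,\ge\,\frac{(N_m\cdot 2^m)^2}{|\T_{a,k}|}\,,
\]
and on the other by the dyadic sum of the pair estimate over $\tau$. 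In the regime $a^2\gtrsim 2^{-2m}$ the naive bound already implies the target; to reach the sharp exponent uniformly down to $a^2\sim\l^{-\frac13}$ the argument must be iterated with a dyadic decomposition of each $B_I$ into angular sub-bushes spanning frequency windows of width $2^j\l^{\frac23}$, $0\le j\le m$, applying the pair bound at each scale.

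The three logarithmic factors arise from three independent dyadic summations in this scheme: (i) the pair-separation $\tau$, (ii) the angular sub-scale $j$ within each bush, and (iii) the slab-grouping used when patching the per-slab count to the global count. Each step costs one factor of $\langle\log(2^m a^2)\rangle$ via Cauchy--Schwarz, producing $\langle\log(2^m a^2)\rangle^3$ in total.
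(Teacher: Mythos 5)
There is a genuine gap, and it is structural rather than technical: your counting scheme never uses the solution $u$, its normalization $\|u_0\|_{L^2}=1$, or the evolution operators $S(t,s)$, and it is based on bushes at different times sharing packets. But the wave packet frame is rebuilt independently on each $\l^{-\frac13}$ slab, so two bushes lying in different slabs are built from disjoint families of pairs $(T,j)$ and satisfy $|B_I\cap B_{I'}|=0$; your pairwise intersection bound and the double-counting identity $\sum_{I,I'}|B_I\cap B_{I'}|=\sum_{(T,j)}|\{I:(T,j)\in B_I\}|^2$ therefore carry no information across slabs. Since the asserted bound $\l^{\frac13}2^{-3m}a^{-4}\langle\log(2^ma^2)\rangle^3$ can be far smaller than the total number $2^m\l^{\frac13}$ of intervals in $\I_m$ (and, when $2^ma^2\approx 1$, even smaller than the number $\l^{\frac13}$ of slabs), any argument that only constrains incidences within a slab cannot reach it. The scenario the proposition must exclude is exactly the one the paper flags as the saturating example: a $2^m$-bush that refocuses periodically, realized on successive slabs by \emph{different} packets carrying the same portion of the $L^2$ mass of $u$. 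Ruling that out is an energy-overlap statement about the Lipschitz flow over long times, not a tube-incidence statement, and your proposal defers precisely this point (``making this rigorous \ldots is where \ldots the Lipschitz energy propagation estimates enter'') without supplying the mechanism. In addition, your claimed pair bound $|B_I\cap B_{I'}|\lesssim(\tau\l^{\frac13})^{-1}$ presumes straight-line separation of frequencies over times up to $\l^{-\frac13}$ only, and your per-scale Cauchy--Schwarz bookkeeping does not visibly produce the exponents $2^{-3m}a^{-4}$; a per-slab version of your scheme yields at best a bound of the form $\l^{\frac23}2^{-m}a^{-2}$ up to logarithms, which is not the statement.

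For comparison, the paper's proof converts the problem into an $L^2$ almost-orthogonality statement for the flow: each bush $B_n$ at time $t_n$ defines a rank-one projection $P_nf=2^{-m}\langle w_n,f\rangle w_n$ built from the bush state $w_n$, with $\|P_nu\|_{L^2}^2\approx 2^ma^2$, and the key input is Lemma \ref{l:packet},
\[
\| P_{n'} S(t_{n'},t_n) P_n \|_{L^2 \to L^2} \lesssim 2^{-m}\alpha\,\langle\log(2^{-m}\alpha)\rangle\,,
\]
proved from the weighted propagation estimate \eqref{weightedest} for the Lipschitz symbol $a_\l$. A $TT^*$/duality argument then gives \eqref{Msumbound}, $2^ma^2M^2\lesssim M+\sum_{n\ne n'}2^{-m}\alpha\langle\log(2^{-m}\alpha)\rangle$, on each block of $\epsilon\,2^{3m}a^2\langle\log(2^ma^2)\rangle^{-1}$ consecutive intervals (the block length chosen so the large-separation terms are absorbed), yielding $M\lesssim(2^ma^2)^{-1}\langle\log(2^ma^2)\rangle^2$ per block; multiplying by the number of blocks gives the stated count, with one logarithm from the block size and two from the per-block bound. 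To repair your argument you would need an ingredient playing the role of Lemma \ref{l:packet} — some quantitative bound on how much of the fixed $L^2$ mass of $u$ distinct bushes at well-separated times can simultaneously capture — and this cannot be extracted from tube combinatorics alone.
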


The second one estimates $\|u_{a,k}\|_{L^8( A_{a,k,m})}$ on a single
$2^{-m}\l^{-\frac 13}$ time slice.

\begin{proposition}
  For each interval $I\in\I_m$, we have
  \begin{equation}\label{e:shorttime}
    \| u_{a,k}\|_{L^8( A_{a,k,m} \cap I\times\R)} \lesssim \lambda^{\frac{5}{24}} 
    2^{\frac{3m}8} a^{\frac12} 2^{-\frac k4}\,.
  \end{equation}
  \label{p:shorttime}\end{proposition}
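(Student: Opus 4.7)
The plan is to prove \eqref{e:shorttime} by combining a short-time Strichartz estimate for the homogeneous flow of $D_t-a_{\l^{2/3}}^w$ with a bilinear $L^4$-type estimate for transverse tube solutions, and then exploiting the density constraint $\chi_{a,k}\le 2^m$ that defines $A_{a,k,m}$. First, I would fix $I\in\I_m$ of length $\delta t = 2^{-m}\l^{-1/3}$, which sits inside one of the $\l^{-1/3}$-slabs used to construct the tube frame, restrict to the index set $\Lambda = \{(T,j)\in\T_{a,k}: I_{T,j}\cap I\ne\emptyset\}$, and write $u_{a,k}|_I = \sum_{(T,j)\in\Lambda} c_{T,j}v_T$. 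By the first alternative in \eqref{iselecta} together with $a\ge\l^{-1/3}$, each $c_{T,j}$ is essentially constant of size $\asymp a$ on $I_{T,j}$, while \eqref{cTbounds2}--\eqref{cTbounds3} yield the $L^2$ coefficient budget $\sum_{(T,j)\in\Lambda}\|c_{T,j}\|_{L^2}^2\lesssim 2^{-2k}$.

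Next, on $I$ each $v_T$ is essentially concentrated on a straight tube of width $\l^{-2/3}$ and spatial length $\lesssim 2^{-m}\l^{2/3}$, so a single-packet computation gives $\|v_T\|_{L^8(I\times\R)}\lesssim \l^{5/24}\,2^{-m/8}$, matching the single-tube profile of the target bound. To handle the full sum I would square and write $\|u_{a,k}\|_{L^8}^2 = \|u_{a,k}^2\|_{L^4}$, expand $u_{a,k}^2 = \sum_{T,T'} c_T c_{T'}\,v_T v_{T'}$, and dyadically decompose the pairs by angular separation $\theta\asymp|\xi_T-\xi_{T'}|/\l$. For transverse $\theta$, the bilinear estimate of Section \ref{sec:bilinear} controls $\|v_T v_{T'}\|_{L^2(I\times\R)}$ with a $\theta^{-1/2}$ gain over Cauchy--Schwarz. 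For near-parallel pairs I would use the restriction to $A_{a,k,m}$: the inequality $\chi_{a,k}\le 2^m$ bounds the number of tubes overlapping at any point by $\lesssim 2^m$, so the diagonal and parallel contributions are capped by $2^m$ rather than by the full tube count. Summing dyadically in $\theta$, combining the bilinear gain with the parallel counting and the coefficient mass bound, and balancing the resulting powers, one arrives at the claimed exponent $\l^{5/24}\,2^{3m/8}\,a^{1/2}\,2^{-k/4}$.

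The main obstacle will be the interaction between the bilinear angular gain and the density constraint. The factor $2^{3m/8}$ is intermediate between the $2^{m/2}$ that naive $L^2$ orthogonality among $2^m$ overlapping packets would produce and the $2^m$ coming from pointwise overlap, so recovering it forces a delicate compromise: transverse pairs must be squeezed via the bilinear $\theta^{-1/2}$ gain, near-parallel clusters must be bounded by the local density $2^m$, and the angular decomposition must be organized so that the sum of these competing contributions is sharp. A secondary technical point is that the bilinear and Strichartz bounds are natural for the smoother symbol $a_{\l^{2/3}}$, so the difference $a_\l-a_{\l^{2/3}}\in\l^{1/3}S_{\l,\l}$ must be absorbed as a harmless perturbation on the short interval $I$, using the smallness of $|I|\cdot\l^{1/3}=2^{-m}$.
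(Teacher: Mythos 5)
Your setup (restriction to $I$, the coefficient normalization $|c_{T,j}|\approx a$, the budget $Na^2\lesssim 2^{-k}$, the single-tube bound $\|v_T\|_{L^8(I\times\R)}\lesssim\l^{5/24}2^{-m/8}$, and the treatment of transverse pairs by the bilinear $L^2$ estimate of Section \ref{sec:bilinear} interpolated with the $L^\infty$ bound on $A_{a,k,m}$) matches the paper's reduction to Lemma \ref{l8blemma} and its high-angle argument, where the threshold is fixed at $\theta=\l^{-\frac13}2^m$. The genuine gap is in your treatment of the near-parallel part. Capping the "diagonal and parallel contributions" by the local density $2^m$ is only a pointwise statement, and pointwise control plus the $L^2$ mass bound cannot reach the exponent $2^{3m/8}$: for a single angular sector (say $k=0$, all tubes within angle $\l^{-\frac13}2^m$, so no transverse pairs at all) the best one gets from $|u_{a,k}|\lesssim\l^{\frac13}2^m a$ on $A_{a,k,m}$ and $\|u_{a,k}\|_{L^2(I\times\R)}\lesssim a N^{\frac12}|I|^{\frac12}$ is $\l^{\frac5{24}}a^{\frac34}2^{\frac{5m}8}$, which when $a\approx 2^{-\frac m2}$ is larger than the target by $2^{\frac m8}$ (i.e.\ it gives $2^{\frac m2}$ in place of $2^{\frac{3m}8}$). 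This is precisely the regime (a focused $2^m$-bush) that saturates the estimate, so no rebalancing of the dyadic angular sum can recover the loss; an additional dispersive input is required.

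What the paper uses to close this is the short-time weighted Strichartz machinery of Section \ref{sec:wavepacket}, which is absent from your outline. Each sector piece $v_l$ is further split into bushes $v_l=\sum_B v_B$ according to the $\l^{-\frac23}$ spatial cell of the initial centers; on the interval $|I|=2^{-m}\l^{-\frac13}$ the frequency spread $\l^{\frac23}2^m$ of a sector produces spatial spread $\lesssim\l^{-\frac23}$, so Theorem \ref{weightedStrichartz} gives an $L^6$ bound for $v_B$ with the weight $\bigl(1+\l^{\frac43}|x-x_B(t)|^2\bigr)$ following a single bicharacteristic. Summing over $B$ with the counting $N_B\le 2^m$, $\sum_B N_B=N_l$ yields $\|v_l\|_{L^6(I\times\R)}\lesssim\l^{\frac16}(\sum_B N_B^3)^{\frac16}\le\l^{\frac16}N_l^{\frac16}2^{\frac m3}$, which combined with the full-slab bound $\l^{\frac16}N_l^{\frac12}$ gives \eqref{vlbounds1}; the derivative weights $(\l^{-\frac23}2^{-m}(D-\xi_l))^n$ are needed so that the multipliers $Q_l$ give almost-orthogonality of the $v_l^2$ in $L^3$, and only then is the result interpolated with $L^\infty$ on $A_m$ to get the $L^4$ bound $\l^{\frac5{12}}N^{\frac12}2^{\frac34 m}$. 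Without this bush decomposition, the bicharacteristic-weighted $L^6$ estimates (which crucially use $|I|\le2^{-m}\l^{-\frac13}$), and the $Q_l$ summation device, your plan does not produce the claimed exponent. A minor point: your worry about $a_\l-a_{\l^{2/3}}$ is moot at this stage, since the $v_T$ solve the $a_{\l^{2/3}}$ equation exactly and the discrepancy has already been absorbed into the coefficients $c_T$ (via $c_T'$) in Step 2 of Section \ref{sec:argument}.
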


Combining the two propositions we obtain
\[
\| u_{a,k}\|_{L^8( A_{a,k,m})} \lesssim \lambda^{\frac14}
\bigl\langle\log(2^{m}a^{2})\bigr\rangle^{\frac 38}\,2^{-\frac k4}\,.
\]
The sets $A_{a,k,m}$ are disjoint, and
$\langle\log(2^{m}a^{2})\bigr\rangle\lesssim\log\l$.  Since there are
at most $\log\l$ values of $m$, we obtain
$$
\|u_{a,k}\|_{L^8([0,1]\times \R)}\lesssim \l^{\frac 14}(\log\l)^{\frac
  12}2^{-\frac k4}\,.
$$
We may sum over $k\ge 0$ without additional loss, and there are at
most $\log\l$ distinct values of $a$, which yields the desired
conclusion \eqref{l8}.

For $p>8$, we interpolate \eqref{e:shorttime} with $|u_{a,k}|\lesssim
\l^{\frac 13}2^ma$ to obtain
\begin{equation*}
  \| u_{a,k}\|_{L^p(A_{a,k,m}\cap I\times\R)}^p \lesssim \lambda^{\frac p3-1} \,
  2^{m(p-5)}a^{p-4}\,2^{-2k}\,,
\end{equation*}
and summing over intervals yields
\begin{align*}
  \| u_{a,k}\|_{L^p( A_{a,k,m})}^p &\lesssim
  \lambda^{\frac{p-2}3}\,2^{m(p-8)}a^{p-8}\,2^{-2k}\bigl\langle\log(2^{m}a^{2})\bigr\rangle^3\\
  &=\l^{\frac p2 -2}(2^\frac m2 a)^{p-8}\,(\l^{-\frac 16}2^{\frac
    m2})^{p-8}\,2^{-2k}\,
  \bigl\langle\log(2^{m}a^{2})\bigr\rangle^3\,.
\end{align*}
By \eqref{msize}, the quantity $a$ takes on dyadic values less than
$2^{-\frac m2}$, whereas $2^m$ takes on dyadic values less than
$\l^{\frac 13}$. We may thus sum over $a,k,m$ to obtain the desired
bound
$$
\|u\|_{L^p([0,1]\times\R)}^p\lesssim \l^{\frac p2 - 2}\,,
$$
which together with the estimate for $p=8$ concludes the proof of
Theorem \ref{theorem1.1}.  \qed


\newsection{Bush counting}\label{sec:bushcount} In this section we
reduce the proof of Proposition~\ref{p:bushcount} to Lemma
\ref{l:packet} below.  There are $2^m \l^\frac13$ intervals in $\I_m$,
so the bound is trivial unless $a \ge 2^{-m}$.  We fix a small number
$\epsilon$, to be determined, and consider $\epsilon \,2^{3m}
a^2\langle \log (2^{m} a^{2})\rangle^{-1}$ consecutive intervals in
$\I_m$. Letting $I_\epsilon$ denote their union, then
$$
|I_\epsilon|=\epsilon\,\l^{-\frac 13}2^{2m}
a^2\langle\log(2^ma^2)\rangle^{-1}\,.
$$
It suffices to prove that, if $I_\epsilon$ contains $M$ intervals from
$\I_m$ which intersect $A_{a,k,m}$, then
\begin{equation}\label{Mbound}
  M\lesssim (2^m a^2)^{-1} \langle \log (2^{m} a^{2})\rangle^2\,.
\end{equation}


Heuristically we would like to say that a point in $A_{a,k,m}$
corresponds to $2^m$ packets through a point. To make this precise, we
need to take into account the tails in the bump functions $\chi_{T}$.

Consider a point $(t,x)$ in a $2^{-m}\l^{-\frac 13}$ slice
$I\times\R$, such that $\chi_{a,k}(t,x)\ge 2^m$.  For each $y\in\R$,
we denote by $N(y)$ the number of tubes $T$ in the definition of
$\chi_{a,k}$ which are centered near $y$ at time $t$, i.e.
\[
N(y)= \#\bigl\{(T,j)\in \T_{a,k}\,:\,t\in
I_{T,j}\;\;\text{and}\;\;|x_T(t)-y|\le\l^{-\frac 23}\bigr\}\,.
\]
Then
\[
\chi_{a,k}(t,x) \lesssim \lambda^{\frac23} \int \bigl(1+
\lambda^{\frac23}|x-y|\bigr)^{-2} N(y)\, dy\,.
\]
Hence there must exist some point $y$ such that $N(y) \gtrsim
2^m$. Thus, we can find a point $y$ and $\gtrsim 2^m$ indices
$(T,j)\in\T_{a,k}$ for which $|x_T(t)-y|\le \l^{-\frac 23}$ and $t\in
I_{T,j}$.  Since there are at most 5 values of $T$ with the same
$\xi_T$ for which $|x_T(t)-y|\le\l^{-\frac 23}$, we may select a
subset of $\approx 2^m$ packets which have distinct values of $\xi_T$.
We call this an \textit{$(a,k,m)$-bush} centered at $(t,y)$. For
simplicity, we assume the bush contains exactly $2^m$ terms.

Consider then a collection $\{B_n\}_{n=1}^M$ of $M$ distinct
$(a,k,m)$-bushes, centered at $(t_n,x_n)$, with
\begin{equation}\label{epsdef}
  \epsilon\,\l^{-\frac 13}2^{2m} a^2\langle\log(2^ma^2)\rangle^{-1}
  \ge|t_n-t_{n'}|\ge \l^{-\frac 13}2^{-m}\quad\text{when}\quad n\ne n'\,.
\end{equation}
Denote by $\{v_{n,l}\}_{l=1,2^m}$ the collection of $2^m$ terms $v_T$
comprising $B_n$.  For each $n$ we define
$$
w_n=a\sum_l \overline{c_{n,l}}(t_n)^{-1} v_{n,l}(t_n, \cdot\,)\,.
$$
Since $|c_{n,l}(t_n)| \approx a$, and the $\xi_T$ are distinct, then
$\|w_n\|_{L^2(\R)}\approx 2^{\frac m2}$, by \eqref{Gaborsum} and the
fact that $v_T(t_n,\cdot)$ is the image of $\phi_T$ under the unitary
flow of $a^w_{\l^{2/3}}(t,x,D)$.

We then define the approximate projection operators $P_n$ on $L^2(\R)$
by
\[
P_n f = 2^{-m} \langle w_n,f\rangle \,w_n\,.
\]
With $u$ the solution to \eqref{directional-hom} we recall that
$\langle v_{n,l}(t,\cdot\,),u(t,\cdot\,)\rangle=c_{n,l}(t)$. Applying
$P_n$ to $u$ at time $t_n$, we then have
\[
P_n u = a\,w_n\,,
\]
therefore
\begin{equation}
  \| P_n u\|_{L^2(\R)}^2 \approx 2^m a^2\,.
  \label{ag}\end{equation}

If these projectors were orthogonal with respect to the flow of
\eqref{directional-hom}, that is
\[
P_{n'} S(t_{n'},t_{n}) P_{n} = 0\,,
\]
then we would obtain
\[
1 = \|u_0\|_{L^2(\R)}^2 \gtrsim \sum_{n} \| P_n u\|_{L^2(\R)}^2
\approx M 2^m a^2\,,
\]
and \eqref{Mbound} would be trivial. This is too much to hope
for. Instead, we will prove that the operators $P_n$ satisfy an almost
orthogonality relation:
\begin{lemma}\label{l:packet}
  Let $\alpha=\max\bigl(\l^{-\frac 13}|t_{n'}-t_n|^{-1},\l^{\frac
    13}|t_{n'}-t_n|\bigr)$.  Then the operators $P_n$ satisfy
  \begin{equation}
    \| P_{n'} S(t_{n'},t_n) P_n \|_{L^2(\R) \to L^2(\R)} \lesssim 
    { 2^{-m}\alpha }\,{ \langle\log (2^{-m}\alpha)\rangle }\,.
    \label{ao}
  \end{equation}
  \label{l:packet}\end{lemma}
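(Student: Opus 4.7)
The plan is to reduce to a rank-one matrix-element estimate and then bound that matrix element by propagating wave packets and counting phase-space collisions. Since $P_n f = 2^{-m}\langle w_n,f\rangle w_n$ is rank one with $\|w_n\|_{L^2}^2\approx 2^m$, the composition $P_{n'}S(t_{n'},t_n)P_n$ is itself rank one and satisfies
\[
\|P_{n'}S(t_{n'},t_n)P_n\|_{L^2\to L^2} \approx 2^{-m}\bigl|\langle w_{n'},S(t_{n'},t_n)w_n\rangle\bigr|.
\]
Hence it suffices to prove $\bigl|\langle w_{n'},S(t_{n'},t_n)w_n\rangle\bigr|\lesssim \alpha\,\langle\log(2^{-m}\alpha)\rangle$, writing $\tau=|t_{n'}-t_n|$ henceforth.

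Next I would expand $w_n=a\sum_l c_{n,l}(t_n)^{-1}v_{n,l}(t_n,\cd)$ as a sum of $2^m$ wave packets with bounded coefficients (since $|c_{n,l}(t_n)|\approx a$), spatially centered at $x_n$ at time $t_n$ but indexed by $2^m$ distinct Gabor frequencies $\xi_{n,l}$. Applying the wave-packet propagation theorem (Theorem~\ref{tubeloc}) for the $a^w_\lambda$ flow, together with the symbolic comparison $a_\lambda-a_{\lambda^{2/3}}\in \lambda^{1/3}S_{\lambda,\lambda}$, the image $S(t_{n'},t_n)v_{n,l}(t_n,\cd)$ stays concentrated, with rapid off-tube decay, along the Hamilton bicharacteristic through $(x_n,\xi_{n,l})$; its frequency remains within $O(\lambda^{2/3})$ of $\xi_{n,l}$, and its spatial width is $\max(\lambda^{-2/3},\tau\lambda^{-1/3})$, the broadening reflecting the dispersion $\partial_\xi^2 a\approx -\lambda^{-1}$.

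With these propagation bounds, one expands $\langle w_{n'},S(t_{n'},t_n)w_n\rangle$ as a double sum over $(l,l')$ with scalar coefficients of modulus $\approx 1$, and estimates each wave-packet inner product by its phase-space overlap, which decays rapidly in the spatial and frequency mismatch. Counting the essentially non-negligible pairs exploits the convexity of $a$ in $\xi$: the fan of bicharacteristics emanating from $x_n$ spreads at rate $\lambda^{-1/3}$ per Gabor step $\lambda^{2/3}$, so in the short-time regime $\tau\le\lambda^{-1/3}$ approximately $\lambda^{-1/3}\tau^{-1}=\alpha$ values of $l$ yield tubes reaching within a packet width of $x_{n'}$, each paired with a single frequency-matched $l'$ of $O(1)$ overlap; in the long-time regime $\tau\ge\lambda^{-1/3}$ only one frequency class of $l'$ survives but individual overlaps are weakened to $(\tau\lambda^{1/3})^{-1/2}=\alpha^{-1/2}$ by the dispersed envelope. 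Both regimes combine to give the claimed bound of order $\alpha$, with the factor $\langle\log(2^{-m}\alpha)\rangle$ arising from dyadic summation of the polynomial packet tails over pairs that are slightly off the bicharacteristic hit locus.

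The main obstacle will be making the wave-packet propagation statement quantitative across the entire allowed range $\lambda^{-1/3}2^{-m}\le \tau\le \epsilon\lambda^{-1/3}2^{2m}a^2$, which far exceeds the $\lambda^{-1/3}$ slab scale on which the Gabor frame was originally defined. This requires combining Theorem~\ref{tubeloc} with the Lipschitz energy-propagation results of section~\ref{sec:wavepacket}, controlling both the centroid drift and the envelope dispersion uniformly in $\tau$; once these are in place, the bicharacteristic counting above yields the stated decay.
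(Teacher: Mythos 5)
Your rank-one reduction is fine: since $P_nf=2^{-m}\langle w_n,f\rangle w_n$ with $\|w_n\|_{L^2}\approx 2^{m/2}$, the bound \eqref{ao} is indeed equivalent to $|\langle w_{n'},S(t_{n'},t_n)w_n\rangle|\lesssim\alpha\,\langle\log(2^{-m}\alpha)\rangle$, and your final count of $\approx\alpha$ ``colliding'' frequencies plus harmonic tails parallels the sum $\sum_j(1+\alpha^{-1}j)^{-1}$ that closes the paper's argument. But the analytic core of your plan --- that $S(t_{n'},t_n)v_{n,l}$ remains a coherent packet with rapid off-tube decay along the bicharacteristic, frequency-localized to $O(\l^{2/3})$ and of spatial width $\max(\l^{-2/3},\tau\l^{-1/3})$, uniformly for $\tau$ up to nearly unit size --- is precisely what is \emph{not} available for Lipschitz coefficients, and it is the reason the whole paper is structured the way it is. Theorem \ref{tubeloc} concerns the mollified flow $a^w_{\l^{2/3}}$ on a single $\l^{-1/3}$ slab; $S$ is the $a^w_\l$ evolution, whose symbol is only in $\l C^1S_{\l,\l}$, and $\l^{-1/3}$ is exactly the known coherence threshold (the examples of Smith--Sogge and Smith--Tataru show bushes can fully refocus after time $2^m\l^{-1/3}$, so no dispersive per-packet decay like your $(\tau\l^{1/3})^{-1/2}$ in the long-time regime can hold in general). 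Your proposed fix, ``combine Theorem \ref{tubeloc} with the energy-propagation results of section \ref{sec:wavepacket},'' does not supply the missing ingredient: the long-time estimate \eqref{weightedest} controls only a first moment with the linear weight $\delta^{-2}\bigl(x-x_0+(t-t_0)\partial_\xi a_\l(t_0,x_0,\xi)\bigr)$, i.e.\ localization of $x$ only to scale $\delta^2$ (which is $\tau^2$, not $\tau\l^{-1/3}$) and of $\xi$ only to scale $\delta\l$, with a single power of decay rather than Schwartz tails --- the paper itself flags this ``order one localization'' as the bottleneck in the higher-dimensional argument. With only that, your pairwise phase-space-overlap estimates and the collision count over $(l,l')$ cannot be justified, and in the long-time regime your own bookkeeping ($2^m$ packets each contributing $\alpha^{-1/2}$) does not even heuristically yield $\lesssim\alpha$ when $\alpha\ll 2^{2m}$.

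The paper's actual route avoids per-packet propagation entirely: it factors $P_1S(t_1,t_0)P_0$ through the spatial weights $\langle\delta^{-2}(x-x_j)\rangle^{\pm2}$ and the constant-coefficient multipliers $m_j(D)=\langle q_j\rangle$ built from the transported centers, proves the weighted $L^2$ bound \eqref{weightedest} by a commutator/Gronwall computation using only $a_\l\in\l C^1S_{\l,\l}$, and then the gain comes from testing $m_0(D)^{-1/2}$ against the bush $w_1$: since $\partial_\xi a_\l(t_0,x_0,\xi_l)-\partial_\xi a_\l(t_0,x_0,\xi_{l'})\approx\l^{-1}(\xi_{l'}-\xi_l)$, the sum $\sum_l m_0(\xi_l)^{-1}$ is bounded by the worst-case sum $\sum_{j\le 2^m}(1+\alpha^{-1}j)^{-1}\lesssim\alpha\langle\log(2^m\alpha^{-1})\rangle$, which is \eqref{wbound}. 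So the separation of group velocities, not packet coherence, is what produces the factor $\alpha$ and the logarithm. To repair your proposal you would need either to prove a genuinely new long-time coherence statement for $C^1$ symbols (not believed to hold at the scales you assert) or to abandon the pairwise-overlap scheme in favor of the weight/multiplier factorization.
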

We postpone the proof of Lemma \ref{l:packet} to the end of section
\ref{sec:wavepacket}.  This estimate is not strong enough to allow us
to use Cotlar's lemma. However, we can prove a weaker result, namely
that for any solution $u$ to \eqref{directional-hom} we have
\begin{equation}
  \sum_n \| P_n u\|_{L^2(\R)} \lesssim   
  C^{\frac12}
  \| u_0\|_{L^2(\R)}\,, \qquad C =  M+ \sum_{n,n'}
  { 2^{-m}\alpha } \,\langle\log (2^{-m}\alpha)\rangle \,.
  \label{ug}\end{equation}
Indeed, by duality \eqref{ug} is equivalent to
\[
\Bigl\|\, \sum_n S(0,t_n) P_n f_n \,\Bigr\|_{L^2(\R)} \lesssim
C^{\frac12} \sup_n \|f_n\|_{L^2(\R)}\,.
\]
Using \eqref{ao} we have
\[
\begin{split}
  \Bigl\|\, \sum_n S(0,t_n) P_n f_n\, \Bigr\|_{L^2(\R)}^2 =&
  \,\sum_{n,n'} \langle f_{n'}, P_{n'} S(t_{n'},t_n) P_n f_n \rangle
  \\ \lesssim & \, \Bigl(M+ \sum_{n\ne n'}{ 2^{-m}\alpha
  }\,\langle\log (2^{-m}\alpha)\rangle \Bigr) \sup_n
  \|f_n\|_{L^2(\R)}^2\,,
\end{split}
\]
establishing \eqref{ug}.

Comparing \eqref{ag} and \eqref{ug} applied to $u$, it follows that
\[
2^{\frac m2}aM\lesssim C^{\frac 12}\,,
\]
or, in expanded form,
\begin{equation}\label{Msumbound}
  2^m a^2 M^2 \lesssim  M+ \sum_{n\ne n'}{ 2^{-m}\alpha }\,{ \langle\log (2^{-m}\alpha)\rangle }
  \,.
\end{equation}
This will be the source of our bound in \eqref{Mbound} for $M$.  If
\[
2^m a^2 M^2 \lesssim M
\]
then we are done, so we consider the summation term.

First consider the sum over terms where $\l^{\frac 13}|t_{n'}-t_n|\ge
1$, for which we have
$$
\sum_{n\ne n'}2^{-m}\l^{\frac 13}|t_{n'}-t_n|\, \bigl\langle\log
(2^{-m}\l^{\frac 13}|t_{n'}-t_n|\,)\bigr\rangle \lesssim
\epsilon\,|\log\epsilon|\, 2^m a^2 M^2\,,
$$
where we use that $r\langle \log r\rangle$ is an increasing function,
and by \eqref{epsdef} and \eqref{msize} that
$$
2^{-m}\l^{\frac 13}|t_{n'}-t_n|\le \epsilon \, 2^{m}a^2\langle\log(2^m
a^2)\rangle^{-1}\lesssim \epsilon\,.
$$
Taking $\epsilon$ small we can thus absorb these terms into the left
hand side of \eqref{Msumbound}.

To conclude the proof, we consider the sum over $\l^{\frac
  13}|t_{n'}-t_n|\le 1$. By the $2^{-m}\l^{-\frac 13}$ separation of
the $M$ points $t_n$, we have
$$
\sum_{n\ne n'} {2^{-m}\l^{-\frac 13}|t_{n'}-t_n|^{-1}} \,
{\bigl\langle\log (2^m\l^{\frac 13}|t_{n'}-t_n|)\bigr\rangle} \lesssim
M(\log M)^2\,.
$$
We conclude that
\[
2^m a^2 M \lesssim (\log M)^2\,,
\]
hence that
\[
M \lesssim (2^{m} a^{2})^{-1} \bigl\langle\log (2^{m}
a^{2})\bigr\rangle^{2}\,.
\]


\newsection{Short time bounds}\label{sec:shorttime} In this section we
reduce the proof of Proposition~\ref{p:shorttime} to a combination of
weighted Strichartz estimates and bilinear estimates, which are proved
respectively in Sections \ref{sec:wavepacket} and \ref{sec:bilinear}.
We remark that the use of Strichartz estimates alone leads to
Proposition \ref{p:shorttime'} instead, and for $d=2$ this yields the
estimates of \eqref{qboundc} only for $p>10$.

We recall the bound we need,
\begin{equation*}
  \| u_{a,k}\|_{L^8(A_{a,k,m}\cap I\times\R)} \lesssim 
  \lambda^{\frac{5}{24}}    2^{\frac38m} a^{\frac12} 2^{-\frac k4}
\end{equation*}
where $|I|=2^{-m}\l^{-\frac 13}$.  Here $u_{a,k}$ on $I\times\R$ has
the form
\begin{equation*}
  1_I(t)\cdot u_{a,k} = 
  \sum_{(T,j)\in\T_{a.k}\,:\,I_{T,j}\cap I\ne\emptyset}1_{I}c_{T,j} v_T \,,
\end{equation*}
where we recall that $c_{T,j}=1_{I_{T,j}}c_T$,
$c'_{T,j}=1_{I_{T,j}}c'_T$.  Also,
$|I_{T,j}|=2^{-k}\lambda^{-\frac13}$, and
\[
|c_{T,j}|\approx a\,, \qquad \|c'_{T,j}\|_{L^2} \lesssim
\lambda^{\frac16}2^{\frac k2}a\,.
\]
Note that if $k\le m$, then $I_{T,j}\supseteq I$ for each term in the
sum, whereas if $k>m$, then $I_{T,j}$ is a dyadic subinterval of $I$,
and there may be multiple terms associated to a tube $T$.

We let $N$ denote the number of terms in the sum for $u_{a,k}$, and
note that, by \eqref{cTbounds2} and \eqref{cTbounds3}, we have
\begin{equation}\label{Nakbound}
  N a^2 \lesssim 2^{-k}\,.
\end{equation}
Using this bound, and dividing $u_{a,k}$ by $a$, we then need
establish the following.

\begin{lemma}\label{l8blemma} 
  Let $\T$ be a collection of $N$ distinct pairs $(T,j)$, and
  $I_{T,j}$ corresponding intervals of length $2^{-k}\l^{-\frac 13}$
  which intersect the interval $I$ of length $2^{-m}\l^{-\frac
    13}$. Assume that
$$
\|c_{T,j}\|_{L^\infty}+2^{-\frac k2}\l^{-\frac
  16}\|c'_{T,j}\|_{L^2}\le 1\,.
$$
Then with $v=\sum_{\T} c_{T,j} v_T$, the following holds
\[
\| v\|_{L^8(A_m\cap I\times\R)} \lesssim \lambda^{\frac{5}{24}}
N^{\frac14} 2^{\frac38m}\,,
\]
where
\[
A_m = \{ \chi \approx 2^m \}\,,\qquad \chi =\sum_{T\in\T} \chi_T\,.
\]
\end{lemma}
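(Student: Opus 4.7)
My plan is to prove the $L^8$ bound by interpolating two simpler estimates: a pointwise $L^\infty$ bound on $A_m$ that uses the density constraint $\chi\approx 2^m$, and a Strichartz-type $L^4$ bound on the full slab $I\times\R$ obtained via the bilinear methods of the following two sections.

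The pointwise bound is immediate. Using $|v_T|\lesssim \l^{\frac 13}\chi_T$ together with $|c_{T,j}|\le 1$, the triangle inequality gives
\[
|v(t,x)|\lesssim \l^{\frac 13}\chi(t,x)\lesssim 2^m\l^{\frac 13}\quad\text{on}\quad A_m\cap I\times\R.
\]

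For the $L^4$ estimate I would aim to establish
\[
\|v\|_{L^4(I\times\R)}\lesssim \l^{\frac{1}{12}}\,N^{\frac 12}\,2^{-\frac m4}.
\]
The single-packet case $\|v_T\|_{L^4(I\times\R)}\approx \l^{\frac{1}{12}}2^{-\frac m4}$ follows from $|v_T|\approx\l^{\frac 13}$ on a set of measure $\approx\l^{-\frac 23}|I|=2^{-m}\l^{-1}$; the $N^{\frac 12}$ factor reflects the expected gain from bilinear orthogonality of transverse packets. To prove the bound I would square and expand
\[
\|v\|_{L^4(I\times\R)}^2=\|v^2\|_{L^2(I\times\R)},\qquad v^2=\sum_{(T,j),(T',j')}c_{T,j}\,c_{T',j'}\,v_Tv_{T'},
\]
decompose dyadically in the frequency separation $|\xi_T-\xi_{T'}|$, invoke the bilinear $L^2$ estimates of Section \ref{sec:bilinear} that supply a gain $\approx|\xi_T-\xi_{T'}|^{-\frac 12}$ from the transversality of tubes at distinct frequencies, and then sum using the frame orthogonality \eqref{Gaborsum} within each angular shell. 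The derivative hypothesis $2^{-\frac k2}\l^{-\frac 16}\|c'_{T,j}\|_{L^2}\le 1$ is used here to control the mismatch between the $a_{\l^{2/3}}^w$-flow that $v_T$ satisfies and the true $a_\l^w$-evolution, in the spirit of the derivations of \eqref{cTbounds2} and \eqref{cTbounds3}.

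Interpolating the two bounds yields
\[
\|v\|_{L^8(A_m\cap I\times\R)}^8\le \|v\|_{L^\infty(A_m)}^{4}\,\|v\|_{L^4(I\times\R)}^{4}\lesssim \bigl(2^m\l^{\frac 13}\bigr)^{4}\bigl(\l^{\frac{1}{12}}N^{\frac 12}2^{-\frac m4}\bigr)^{4}=\l^{\frac 53}\,N^{2}\,2^{3m},
\]
which is precisely the asserted bound. The main obstacle is the sharp $L^4$ estimate with its $N^{\frac 12}$ dependence: the packets $v_T$ are not pairwise orthogonal, so one must extract a genuine bilinear gain from the angular separation of distinct $\xi_T$ in order to absorb the off-diagonal contributions, and a purely diagonal square-summation argument is insufficient for nearly parallel tubes. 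A potential logarithmic loss from the dyadic angular decomposition should be avoidable through the weighted Strichartz formulation developed in Section \ref{sec:wavepacket}, which handles the density weight $\chi$ more directly than the crude $L^\infty$ bound does.
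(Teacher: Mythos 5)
Your interpolation scheme is internally consistent (the exponents do recombine to $\lambda^{5/24}N^{1/4}2^{3m/8}$), but the estimate it rests on, namely $\| v\|_{L^4(I\times\R)} \lesssim \lambda^{1/12}N^{1/2}2^{-m/4}$ over the \emph{full} slab, is false, and this is a genuine gap rather than a technical one. Take coefficients $c_{T,j}\equiv 1$ and $N$ packets, $1\ll N\le 2^m$, all with the same initial center $x_T=x_0$ and with $N$ consecutive frequencies on the $\lambda^{2/3}$ lattice; already in the flat case the sum is a Dirichlet-kernel focus at $t=0$: $|v|\approx \lambda^{1/3}N$ on a spatial set of width $\approx\lambda^{-2/3}N^{-1}$, and the quadratic dispersion $\partial_\xi^2 a\approx\lambda^{-1}$ only destroys the coherence after time $\approx\lambda^{-1/3}N^{-2}$. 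Hence $\|v\|_{L^4(I\times\R)}^4\gtrsim \lambda^{1/3}\min\bigl(N,\,N^3 2^{-m}\bigr)$, which exceeds your claimed $\lambda^{1/3}N^2 2^{-m}$ by a factor $\min(N,2^mN^{-1})$, e.g. by $2^{m/2}$ when $N=2^{m/2}$. Note two things: first, such a focus has packet density $\approx N<2^m$, so it lies \emph{outside} $A_m$ — the restriction $\chi\approx 2^m$ must therefore be kept in the ``low'' norm and cannot be dropped before interpolating, which is exactly the step your Hölder inequality performs. Second, all the packets in this example have mutual angles $\le \lambda^{-1/3}N\le\lambda^{-1/3}2^m$, i.e. they sit inside a single angular sector, so the transversality gain $\theta^{-1/2}$ of Lemma \ref{bilinearlemma} (and any bilinear refinement of the type you invoke) gives nothing for it; the near-parallel interactions, which you flag as the main obstacle, are precisely where your proposed $L^4$ bound breaks.

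This is also where the paper's proof is structured differently: it never proves an unrestricted $L^4$ bound for $v$, but bounds $\|v^2\|_{L^4(A_m\cap I\times\R)}$ after splitting $v^2$ at the angle $\theta=\lambda^{-1/3}2^m$. The large-angle part is handled by interpolating the slab-wise bilinear $L^2$ bound \eqref{l2bi} against the pointwise bound $\lesssim\lambda^{2/3}2^{2m}$ which is valid only on $A_m$. The small-angle part $\sum_l v_l^2$ needs an entirely different mechanism: the weighted bound \eqref{vlbounds}, obtained from the $L^6$ weighted Strichartz estimates of Theorem \ref{weightedStrichartz} after a further decomposition of each $v_l$ into spatial bushes $v_B$, with the combinatorial input $N_B\le 2^m$ (at most $\approx 2^m$ lattice frequencies fit in the sector $K_l$) and $\sum_B N_B^3\le 2^{2m}N_l$, followed by the $\ell^{3/2}$ summation in $l$ via the multipliers $Q_l$ and, once more, interpolation with $L^\infty(A_m)$. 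If you want to salvage your outline, the $L^4$ (or rather $L^3$ of $v_l^2$) input must be formulated either on $A_m$ or with weights that see the spatial concentration, and you must supply a substitute for the bush/weighted-Strichartz step; as written, the proposal has no mechanism for the near-parallel regime and its key intermediate estimate fails there.
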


To start the proof, we first show that we can dispense with the high
angle interactions. We want to establish
\[
\| v^2\|_{L^4(A_m\cap I\times\R)} \lesssim \lambda^{\frac{5}{12}}
N^{\frac 12} 2^{\frac34m}\,.
\]
We express $v^2$ using a bilinear angular decomposition.  Fixing some
reference angle $\theta$ we can write
$$
v^2 = \sum_{l} \pm\, v_l^2 \,\;+ \sum_{\angle(T,S) \ge \theta}c_{T,j}
v_T \cdot c_{S,k} v_S
$$
where $v_l=\sum_{\xi_T\in K_l}c_{T,j}v_T$ consists of the terms for
which $\xi_T$ lies in an interval $K_l$ of length $\approx\l\theta$,
where the $K_l$ have overlap at most 3. The second sum is over a
subset of $\T\times\T$ subject to the condition
$\angle(T,S)=\l^{-1}|\xi_T-\xi_S|\ge \theta$.

For the second term we have a bilinear $L^2$ estimate,

\begin{lemma}\label{bilinearlemma}
  The following bilinear $L^2$ bound holds,
  \begin{equation}
    \biggl\| \,\sum_{\angle(T,S) \ge \theta} c_{T,j} v_T\cdot  c_{S,k} v_S \,\biggr\|_{L^2(I\times\R)}
    \lesssim
    \theta^{-\frac12}\sum_{T\in\T}\,\Bigl(\, \|c_{T,j}\|_{L^\infty}^2+
    2^{-k}\l^{-\frac13}\|c'_{T,j}\|_{L^2}^2 \,\Bigr)
    \,.
    \label{l2bi}\end{equation}
\end{lemma}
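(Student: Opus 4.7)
The plan is a bilinear almost-orthogonality argument in the spacetime Fourier variables.  Each tube solution $v_T$ has spacetime Fourier transform essentially concentrated along the characteristic curve $\tau=a_{\l^{2/3}}(t,x,\xi)$ in the region $|\xi-\xi_T|\lesssim \l^{2/3}$, whose tangent at the center has slope $\partial_\xi a(\xi_T)$.  Consequently the product $v_Tv_S$ is spacetime-Fourier supported, up to rapidly decaying tails, in a parallelogram $\Pi_{T,S}$ centered at $(\xi_T+\xi_S,\,a(\xi_T)+a(\xi_S))$ whose two $\l^{2/3}$-length sides lie along directions whose slopes differ by $|\partial_\xi a(\xi_T)-\partial_\xi a(\xi_S)|\approx \l^{-1}|\xi_T-\xi_S|\gtrsim\theta$ by \eqref{acond1}.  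Each $\Pi_{T,S}$ therefore has area $\approx \theta\l^{4/3}$.

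The crucial combinatorial input is that these parallelograms have $O(1)$ overlap: the map $(\xi_T,\xi_S)\mapsto(\xi_T+\xi_S,\,a(\xi_T)+a(\xi_S))$ has Jacobian determinant $|\partial_\xi a(\xi_T)-\partial_\xi a(\xi_S)|\gtrsim\theta$, so the images of the $\l^{2/3}$-spaced lattice of $(\xi_T,\xi_S)$ lie in frequency space with density $\theta^{-1}\l^{-4/3}$, exactly matching the area of each $\Pi_{T,S}$.  Plancherel together with this bounded overlap reduces the claim to
\[
\biggl\|\sum_{\angle(T,S)\ge \theta} c_{T,j}v_T\cdot c_{S,k}v_S\biggr\|_{L^2(I\times\R)}^2 \lesssim \sum_{\angle(T,S)\ge \theta} \|c_{T,j}v_T\cdot c_{S,k}v_S\|_{L^2(I\times\R)}^2\,.
\]
For each individual pair, the pointwise bound $|v_T|\lesssim \l^{1/3}\chi_T$ combined with the fact that the tubes $T,S$ cross transversally at relative velocity $\approx\theta$ (so they meet in a spacetime region of measure $\approx \l^{-4/3}\theta^{-1}$) yields $\|v_Tv_S\|_{L^2}\lesssim \theta^{-1/2}$.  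Using $|c_{T,j}|\le\|c_{T,j}\|_{L^\infty}$ and factoring the double sum gives the right side $\theta^{-1}\bigl(\sum_{(T,j)\in\T}\|c_{T,j}\|_{L^\infty}^2\bigr)^2$, which after the square root matches the claim with only the $L^\infty$ terms.

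The main technical obstacle is the time envelope $c_{T,j}(t)$, which is not constant: multiplying by it broadens the $\tau$-direction Fourier support of $c_{T,j}v_T$ by an amount that could spoil the overlap count, and the bound is to depend on $c_{T,j}$ only through $\|c_{T,j}\|_{L^\infty}$ and $\|c'_{T,j}\|_{L^2}$.  I would handle this by a dyadic Fourier decomposition of $c_{T,j}$ in $t$: the component with time-frequency $\lesssim|I_{T,j}|^{-1}=2^k\l^{1/3}$ acts essentially as a constant modulator of size $\|c_{T,j}\|_{L^\infty}$, while each higher dyadic component at scale $2^l$ has $L^2$-mass $\lesssim\|c'_{T,j}\|_{L^2}/2^l$ by Plancherel and, after summing in $l$, produces the $2^{-k}\l^{-1/3}\|c'_{T,j}\|_{L^2}^2$ correction.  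A secondary concern is that $a_{\l^{2/3}}$ depends on $(t,x)$, so the characteristic curves above are only approximate; the $S_{\l,\l^{2/3}}$ symbol calculus recalled in Section \ref{sec:argument} controls the resulting errors on the scales involved.
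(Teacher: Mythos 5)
There is a genuine gap, and it is precisely the point you set aside as a ``secondary concern.'' Your argument is essentially the classical flat-case proof: you place the spacetime Fourier support of each product $v_Tv_S$ in a parallelogram of thickness $\approx\theta_{TS}\l^{2/3}$ about $(\xi_T+\xi_S,\,a(\xi_T)+a(\xi_S))$ and deduce almost orthogonality from an $O(1)$-overlap count. The paper itself notes that this works when $a$ is independent of $(t,x)$ (it is the two-dimensional restriction-theorem argument), but here $a_{\l^{2/3}}$ is built from a Lipschitz metric: $|\partial_{t,x}a_{\l^{2/3}}|\sim\l$, so the instantaneous time frequency of $v_T$ along its tube drifts by as much as $\l\cdot\l^{-1/3}=\l^{2/3}$ over a single time slab (and by $\l^{1/3}$ across the tube width). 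Consequently $\widehat{v_T}$ is not confined to an $O(\l^{1/3})$-neighborhood of a line, the quantity $a(\xi_T)+a(\xi_S)$ is not even a well-defined center, and the $\tau$-separation $\approx\theta\l^{2/3}$ between neighboring parallelograms (with $\theta$ as small as $\l^{-1/3}2^m$) is completely swamped by this drift. The bounded-overlap step, and hence the key inequality $\|\sum c_{T,j}v_T\,c_{S,k}v_S\|_{L^2}^2\lesssim\sum\|c_{T,j}v_T\,c_{S,k}v_S\|_{L^2}^2$, is therefore unjustified; the symbol calculus does not rescue it, because the obstruction is not a lower-order error but the leading-order roughness of the coefficients. (Your single-pair bound $\|v_Tv_S\|_{L^2}\lesssim\theta^{-1/2}$ via transversality is fine; it is the orthogonality across pairs that fails.)

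The paper's proof is designed exactly to capture the cancellation that your Fourier-support picture cannot see: for four packets with frequency centers $\xi_{m\pm j},\xi_{n\pm i}$, the combination $a_{(4')}=a(t,x,\xi_{m+j})+a(t,x,\xi_{m-j})-a(t,x,\xi_{m+i})-a(t,x,\xi_{m-i})$, evaluated at the \emph{same} point $(t,x)$, cancels the rough $(t,x)$-dependence and obeys two-sided bounds $\approx\l^{1/3}(j^2-i^2)$ by the curvature condition \eqref{acond1}. One then integrates by parts in physical space with $L=D_t-a_\xi(t,x,\xi_m)D$ against the fourfold product $v_{(4)}$, controls the error terms by the $S_{\l,\l^{2/3}}$ calculus and the pointwise tube localization, gains the $\langle m-n\rangle$ decay separately by inserting factors of $\l^{-2/3}(D-\xi_T)$, uses the transversal-intersection bound \eqref{tubeproduct} to produce the $\theta^{-1}$, and sums with Schur's lemma. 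Finally, the reduction to constant coefficients $c_{T,j}(t_0)$, $c'_{T,j}(s)$ is done by the fundamental theorem of calculus on each interval $J$ with $I_{T,j}=J$, which is simpler and avoids the slowly decaying Fourier tails of the sharp cutoffs $1_{I_{T,j}}$ that your dyadic-in-$t$ decomposition would have to confront. To repair your approach you would need to replace the global Fourier-support claim by some version of this pointwise cancellation; as written, the central orthogonality step does not hold for Lipschitz coefficients.
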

For this estimate there is no restriction on the number of tubes, nor
do we require equal size of the $c_T$. The integral can furthermore be
taken over the interval of length $\l^{-\frac 13}$ containing $I$; the
short time condition is needed only for the small angle interactions.

We prove \eqref{l2bi} in section \ref{sec:bilinear}. In our case, each
term in the sum on the right is bounded by 1.  On the other hand, we
have $|v_T| \lesssim \lambda^\frac13 \chi_T$ therefore
\[\, \biggl| \,\sum_{\angle(T,S) \ge \theta} c_{T,j} v_T \cdot c_{S,k}
v_S\, \biggr| \lesssim \lambda^\frac23\chi^2\,,
\]
which yields
\[
\biggl\| \,\sum_{\angle(T,S) \ge \theta} c_{T,j} v_T \cdot c_{S,k}
v_S\, \biggr\|_{L^\infty(A_m)} \lesssim \lambda^\frac23 2^{2m}
\]
Interpolating the $L^2$ and the $L^\infty$ bounds we obtain
\[
\biggl\| \, \sum_{\angle(T,S) \ge \theta} c_{T,j} v_T \cdot c_{S,k}
v_S\,\biggr\|_{L^4(A_m)} \lesssim \lambda^{\frac13} N^\frac12 2^m
\theta^{-\frac14} \,.
\]
This is what we need for the high angle component provided that
\[
\lambda^{\frac13} N^\frac12 2^m \theta^{-\frac14} =
\lambda^\frac{5}{12} N^\frac12 2^{\frac34m}\,,
\]
or equivalently
\[
\theta = \lambda^{-\frac13} 2^m\,.
\]

Hence it suffices to restrict ourselves to the terms $v_l$, where
$\xi_T\in K_l$, an interval of width $\delta \xi = \lambda^{\frac23}
2^m$ centered on $\xi_l$.  Let $N_l$ denote the number of terms
$(T,j)$ in $v_l$, so that $\sum_l N_l\le 3N$.  We will prove that
\begin{equation}\label{vlbounds}
  \bigl\|(1+\l^{-\frac 43}2^{-2m}| D-2\xi_l|^2) v_l^2\bigr\|_{L^3(I\times\R)}\lesssim 
  \l^\frac13 N_l^\frac23 2^{\frac m3 }\,.
\end{equation}
Let $Q_l(D)=(1+\l^{-\frac 43}2^{-2m}| D-2\xi_l|^2)^{-1}$. We observe
that, for $w_l\in\mathcal S(\R)$,
$$
\Bigl\|\,\sum_l Q_l w_l\,\Bigr\|_{L^3(\R)}\lesssim
\Bigl(\sum_l\|w_l\|_{L^3(\R)}^{\frac 32}\Bigr)^\frac23\,,
$$
which follows by interpolating the bounds
$$
\Bigl\|\,\sum_l Q_l w_l\,\Bigr\|_{L^\infty(\R)}\lesssim
\sum_l\|w_l\|_{L^\infty(\R)}\,,\qquad \Bigl\|\,\sum_l Q_l
w_l\,\Bigr\|_{L^2(\R)}\lesssim
\Bigl(\sum_l\|w_l\|_{L^2(\R)}^2\Bigr)^\frac12\,.
$$
The second follows by the finite overlap condition, the first since
$Q_l$ is convolution with respect to an $L^1$ function. Applying this
to \eqref{vlbounds} yields
$$
\Bigl\|\,\sum_l v_l^2\,\Bigr\|_{L^3(I\times\R)}\lesssim \l^\frac 13
N^\frac23 2^\frac m3\,.
$$
Interpolating with the $L^\infty$ bounds as above yields the desired
bound
$$
\Bigl\|\,\sum_l v_l^2\,\Bigr\|_{L^4(A_m\cap I\times\R)} \lesssim
\l^\frac5{12} N^\frac 12 2^{\frac34 m}\,.
$$

By Leibniz' rule and H\"older's inequality, \eqref{vlbounds} follows
from showing, for $n\le 2$, that
\begin{equation}\label{vlbounds1}
  \bigl\|\bigl(\l^{-\frac 23}2^{-m}(D-\xi_l)\bigr)^n v_l\bigr\|_{L^6(I\times\R)}\lesssim 
  \l^\frac16 N_l^\frac13 2^{\frac m6 }\,.
\end{equation}

We first note the following bound on $v_l$ in $L^6$ over the entire
$2^{-\min(k,m)}\l^{-\frac 13}$ time slice $I^*\times\R$ on which the
$v_l$ are supported:
\begin{equation}\label{vlbounds2}
  \bigl\|\bigl(\l^{-\frac 23}2^{-m}(D-\xi_l)\bigr)^n v_l\bigr\|_{L^6(I^*\times\R)}\lesssim 
  \l^\frac16 N_l^\frac 12\,.
\end{equation}
To establish this, it suffices by the generalized Minkowski inequality
to establish it on a time interval $J$ of length $2^{-k}\l^{-\frac
  13}$, with $N_l$ replaced by the number $N_J$ of $(T,j)$ for which
$I_{T,j}=J$.  If $k\le m$, then there is only one interval $J$ to
consider, whereas $k>m$ means $J$ is a dyadic subdivision of $I$.  If
$t_0$ is the left endpoint of $J$, then we have the initial data bound
$$
\|\bigl(\l^{-\frac 23}2^{-m}(D-\xi_l)\bigr)^n
v_l(t_0)\|_{L^2(\R)}\lesssim \Bigl(\sum |c_{T,j}(t_0)|^2\Bigr)^{\frac
  12}\lesssim N_J^\frac 12\,,
$$
and for the inhomogeneous term we have
$$
\|\bigl(\l^{-\frac
  23}2^{-m}(D-\xi_l)\bigr)^n\bigl(D_t-a_{\l^{2/3}}^w(t,x,D)\bigr)v_l
\|_{L^1_tL^2_x(J\times\R)} \lesssim |J|^{\frac 12}\Bigl(\sum
\|c'_{T,j}\|_{L^2(J)}^2\Bigr)^{\frac 12}\lesssim N_J^\frac 12\,.
$$
The result then holds by the weighted Strichartz estimates, Theorem
\ref{weightedStrichartz}.

To obtain the gain in the norm over the slice $I\times\R$, we make a
further decomposition $v_l=\sum_B v_B$ into ``bushes".  This is made
by decomposing the $x$-axis into disjoint intervals of radius
$\l^{-\frac 23}$, indexed by $B$, with center $x_B$, and letting $v_B$
denote the sum of the $c_{T,j} v_T$ in $v_l$ whose center $x_T$ at
time $t_0$ satisfies $|x_T-x_B|\le\l^{-\frac 23}$.

For simplicity, we take $t_0=0$.  Let $x_B(t)$ denote the
bicharacteristic curve passing through $(x_B,\xi_l)$. Then
$|x_T-x_B|\lesssim \l^{-\frac 23}$, provided $T$ is part of $v_B$. By
Theorem \ref{weightedStrichartz} we thus have the weighted Strichartz
estimates,
\begin{equation*}
  \bigl\|
  \bigl(1+\l^{\frac 43}|x-x_B(t)|^2\bigr)
  \bigl(\l^{-\frac 23}2^{-m}(D-\xi_l)\bigr)^n v_B\bigr\|_{L^6(I\times\R)}\lesssim 
  \l^\frac16 N_B^\frac12\,,
\end{equation*}
where $N_B$ is the number of terms in $v_B$.  We may sum over $B$ to
obtain
\begin{equation}\label{vlbounds4}
  \bigl\|\bigl(\l^{-\frac 23}2^{-m}(D-\xi_l)\bigr)^n v_l\bigr\|_{L^6(I\times\R)}\lesssim 
  \l^\frac16 \Bigl(\,\sum_B N_B^3\Bigr)^\frac 16
  \le \l^\frac 16 N_l^\frac 16 2^{\frac m3}
  \,,
\end{equation}
where at the last step we used $N_B\le 2^m$, and $\sum_B N_B=N_l$.
Combining \eqref{vlbounds4} with \eqref{vlbounds2} yields
\eqref{vlbounds1}.  \qed


\newsection{Wave packet propagation}\label{sec:wavepacket} In this
section we establish the basic properties of the wave packet solutions
$v_T$ on the $\l^{-\frac 13}$ time scale, and prove weighted
Strichartz estimates. In addition, we give the proof of
Lemma~\ref{l:packet}.  The results of this section are closely related
to those of \cite[Section 4]{KT} through a space-time rescaling, but
for completeness we provide full proofs.

Throughout this section, we let $A=a^w_{\l^{2/3}}(t,x,D)\,,$ and let
$u$ solve
$$
(D_t-A)u=0\,,\qquad u(0,\cdot\,)=u_0\,.
$$ 
We assume $u_0\in\mathcal{S}$, so that all derivatives of $u$ are
rapidly decreasing in $x$. Throughout, $I$ is an interval with left
hand endpoint $0$ and $|I|\le\l^{-\frac 13}$.

\begin{lemma}\label{freqloc}
  For any $m,n\ge 0$ and $\xi_0\in\R$,
  \begin{equation*}
    \sum_{j=0}^n\bigl(\l^{-\frac 23}2^{-m}\bigr)^j\|(D-\xi_0)^j u\|_{L^\infty L^2(I\times\R)}
    \le C_n
    \sum_{j=0}^n\bigl(\l^{-\frac 23}2^{-m}\bigr)^j\|(D-\xi_0)^j u_0\|_{L^2(\R)}\,.
  \end{equation*}
\end{lemma}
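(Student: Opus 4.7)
The plan is to prove this by induction on $n$, using an energy estimate for the evolution $(D_t - A)$ together with a commutator expansion. The key structural inputs are: (i) $A = a^w_{\l^{2/3}}(t,x,D)$ is essentially self-adjoint since $a_{\l^{2/3}}$ is real, so $S(t,0)$ is unitary on $L^2(\R)$; (ii) the $(t,x)$-Lipschitz regularity of the underlying symbol $a$ ensures $|\partial_x a_{\l^{2/3}}|\lesssim \l$ (not $\l^{5/3}$), with each further $x$-derivative costing a factor $\l^{2/3}$ from the frequency truncation; and (iii) the time interval satisfies $|I|\le \l^{-1/3}$.

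The base case $n=0$ is unitarity: $\|u\|_{L^\infty L^2(I)}=\|u_0\|_{L^2}$. For the inductive step, fix $1\le j\le n$ and set $v_j=(D-\xi_0)^j u$, so that
\[
(D_t-A)v_j = [(D-\xi_0)^j,\,A]\,u\,,
\]
and by self-adjointness of $A$ plus Duhamel,
\[
\|v_j(t)\|_{L^2}\le \|v_j(0)\|_{L^2}+\int_0^{|t|} \|[(D-\xi_0)^j,A]u(s)\|_{L^2}\,ds\,.
\]
Since $(D-\xi_0)^j$ is a Fourier multiplier of polynomial order $j$, the Weyl-Moyal expansion of the commutator terminates and produces a finite sum of terms dominated by $(\partial_x^k a_{\l^{2/3}})^w\,(D-\xi_0)^{j-k}$ for $1\le k\le j$. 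Each factor $(\partial_x^k a_{\l^{2/3}})^w$ has operator norm $\lesssim \l^{1+2(k-1)/3}$, by rescaling $S_{\l,\l^{2/3}}$ to $S^0_{0,0}$ as the paper notes for \cite{weyl}.

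Writing $W_i = (\l^{-2/3}2^{-m})^i\|(D-\xi_0)^i u\|_{L^\infty L^2(I)}$ and $W_i^0 = (\l^{-2/3}2^{-m})^i\|(D-\xi_0)^i u_0\|_{L^2}$, multiplication of the Duhamel bound by $(\l^{-2/3}2^{-m})^j$ collapses the powers of $\l$ to unity: the $k$-th commutator contribution becomes
\[
\l^{-1/3}\cdot\l^{1+2(k-1)/3}\cdot(\l^{-2/3}2^{-m})^k\cdot W_{j-k} \;=\; 2^{-mk}\,W_{j-k}\,.
\]
Since $m\ge 0$, one obtains a triangular system $W_j\le W_j^0+C\sum_{i<j}W_i$, which iterates to $\sum_{i\le n}W_i\le C_n\sum_{i\le n}W_i^0$, completing the induction.

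The main obstacle is the bookkeeping of the $\l$-powers in the commutator expansion; in particular one must carefully use the Lipschitz structure to get the pointwise bound $|\partial_x a_{\l^{2/3}}|\lesssim\l$ (consistent with $\l C^1S_{\l,\l^{2/3}}$), since the nominal $\l S_{\l,\l^{2/3}}$ bound $\l^{5/3}$ would be just barely too weak. Once this refined bound is secured, the cancellation between $|I|\le\l^{-1/3}$, the symbol growth $\l^{1+2(k-1)/3}$, and the weights $(\l^{-2/3}2^{-m})^k$ produces precisely the factor $2^{-mk}$ and makes the induction close without any $\l$-loss.
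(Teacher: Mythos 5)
Your argument is correct and follows essentially the same route as the paper: induction on the number of factors of $(D-\xi_0)$, an energy/Duhamel estimate using self-adjointness of $A$, and $L^2$ boundedness of the commutator terms after rescaling, with the loss $\l^{1/3}$ absorbed by $|I|\le\l^{-1/3}$. Your explicit bookkeeping (in particular the observation that the $C^1$ structure gives $|\partial_x a_{\l^{2/3}}|\lesssim\l$, so that $\l^{-2/3}[(D-\xi_0),A]=\l^{1/3}b^w$ with $b\in S_{\l,\l^{2/3}}$) is exactly the point the paper's proof relies on, just phrased there via the symbol classes and an iterated one-factor commutation rather than a full Moyal expansion.
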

\begin{proof}
  We use induction on $n$.  The case $n=0$ follows by self-adjointness
  of $A$, so we assume the result holds for $n-1$.  We may write the
  commutator
$$
\l^{-\frac 23}[(D-\xi_0),A]=\l^{\frac 13} b^w(t,x,D)\,,\quad b\in
S_{\l,\l^{2/3}}\,,
$$
whereas commuting with $\l^{-\frac 23}(D-\xi_0)$ preserves the set of
Weyl-pseudodifferential operators with symbol in
$S_{\l,\l^{2/3}}$. Hence, we may write
$$
(D_t-A)\bigl(\l^{-\frac 23}2^{-m}\bigr)^n(D-\xi_0)^n u= \l^{\frac
  13}\sum_{j=0}^{n-1} b^w(t,x,D) \bigl(\l^{-\frac
  23}2^{-m}\bigr)^j(D-\xi_0)^j u\,,
$$
where $b\in S_{\l,\l^{2/3}}$ may vary with $j$. The proof follows by
$L^2$ boundedness of $b^w(t,x,D)$ and the Duhamel formula, since
$|I|\le\l^{-\frac 13}$.
\end{proof}

A similar proof, using the fact that
$$
\l^{\frac 13}[x,A]=\l^{\frac 13}b^w(t,x,D)\,,\quad b\in
S_{\l,\l^{2/3}}\,,
$$
and that commuting with $\l^{\frac 13}x$ preserves $S_{\l,\l^{2/3}}$,
yields the following.
\begin{corollary}\label{c:weight}
  For any $l,m,n\ge 0$, and all $x_0,\xi_0\in\R$,
  \begin{multline*}
    \sum_{j=0}^n\sum_{k=0}^l
    \l^{\frac k3 }\bigl(\l^{-\frac 23}2^{-m}\bigr)^j\|(x-x_0)^k(D-\xi_0)^j u\|_{L^\infty L^2(I\times\R)}\\
    \le C_{n,l} \sum_{j=0}^n\sum_{k=0}^l \l^{\frac k3}\bigl(\l^{-\frac
      23}2^{-m}\bigr)^j\|(x-x_0)^k(D-\xi_0)^j u_0\|_{L^2(\R)}\,.
  \end{multline*}
\end{corollary}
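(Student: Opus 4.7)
The strategy is to induct on the total order $l+n$, taking Lemma~\ref{freqloc} (the case $l=0$, all $n$) as the base. Set $X := \l^{\frac13}(x-x_0)$ and $Y := \l^{-\frac23}2^{-m}(D-\xi_0)$, so the sum on the left of the corollary becomes $\sum_{k\le l,\,j\le n}\|X^k Y^j u\|_{L^\infty L^2(I\times\R)}$, and the one on the right the analogous sum for $u_0$ at $t=0$. Note $[X,Y]=0$, and $u_0\in\mathcal S$ ensures all quantities are finite to begin with. The goal is then to apply $(D_t-A)$ to $X^k Y^j u$ and close via Duhamel on $|I|\le \l^{-\frac13}$, exactly mimicking the proof of Lemma~\ref{freqloc}.

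I would first record the two key commutator identities. The proof of Lemma~\ref{freqloc} already gives $[Y,A]=2^{-m}\l^{\frac13}b^w(t,x,D)$ with $b\in S_{\l,\l^{2/3}}$, and the hint stated before the corollary gives $[X,A]=\l^{\frac13}b^w(t,x,D)$ with $b\in S_{\l,\l^{2/3}}$. Both are $L^2$-bounded with norm $O(\l^{\frac13})$. Next I would verify the \emph{preservation property}: for any $c\in S_{\l,\l^{2/3}}$, the operators $[X,c^w]$ and $[Y,c^w]$ are again Weyl quantizations of symbols in $S_{\l,\l^{2/3}}$, hence uniformly $L^2$-bounded. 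This follows from the Weyl formulas $[x,c^w]=(-i\partial_\xi c)^w$ and $[D,c^w]=(-i\partial_x c)^w$: on $S_{\l,\l^{2/3}}$ a $\partial_\xi$ gains $\l^{-1}$ but is multiplied back by $\l^{\frac13}$ inside $X$, and a $\partial_x$ loses $\l^{\frac23}$ but is multiplied by $\l^{-\frac23}$ inside $Y$, so the classes are preserved.

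With these ingredients in hand the inductive step is largely mechanical. Expanding
\[
(D_t-A)X^k Y^j u = -[X^k Y^j, A]\,u
\]
by the Leibniz rule for commutators yields a finite sum of terms containing exactly one factor of $[X,A]$ or $[Y,A]$. Using the preservation property to commute the resulting pseudodifferential factor past the remaining $X$'s and $Y$'s (at the cost of further commutator terms of strictly lower total order, still in the same class), one arrives at a sum of expressions of the form $\l^{\frac13}c^w X^{k'}Y^{j'}u$ with $c\in S_{\l,\l^{2/3}}$ uniformly and $k'+j'<k+j$. Since such $c^w$ are $L^2$-bounded by \cite{weyl} and $|I|\le\l^{-\frac13}$, Duhamel's formula yields
\[
\|X^k Y^j u\|_{L^\infty L^2(I\times\R)} \le \|X^k Y^j u_0\|_{L^2} + C\!\!\sum_{k'+j'<k+j}\!\!\|X^{k'}Y^{j'}u\|_{L^\infty L^2(I\times\R)},
\]
and the inductive hypothesis closes the estimate.

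The only real obstacle is the bookkeeping in the preservation step: one must check that every auxiliary commutator generated by sliding $X$'s or $Y$'s past the factor $b^w$ coming from $[X,A]$ or $[Y,A]$ produces a Weyl operator whose symbol still lies in $S_{\l,\l^{2/3}}$, so that no stray powers of $\l$ appear. The precise calibration of the normalizations $\l^{\frac13}$ in $X$ and $\l^{-\frac23}$ in $Y$ against the derivative losses built into $S_{\l,\l^{2/3}}$ is exactly what makes this go through, and is what forces the weights to take the form stated in the corollary.
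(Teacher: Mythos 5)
Your proposal is correct and is essentially the paper's own argument: the paper proves the corollary by exactly this route, citing $[x,A]=b^w(t,x,D)$ with $b\in S_{\l,\l^{2/3}}$, the fact that commuting with $\l^{\frac13}x$ (and with $\l^{-\frac23}(D-\xi_0)$) preserves $S_{\l,\l^{2/3}}$, and then repeating the commutator--Duhamel induction of Lemma \ref{freqloc} on $|I|\le\l^{-\frac13}$. One inessential slip: $[X,Y]=i\,2^{-m}\l^{-\frac13}\ne 0$, but since this is a small scalar (and your expansion never actually needs to reorder $X$'s and $Y$'s among themselves) it does not affect the argument.
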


To obtain weighted localization in $x$ at the $\l^{-\frac 23}$ scale
we need to evolve the spatial center of $u$ along the bicharacteristic
flow.  Additionally, we must work on a time interval $I$ so that the
spread of bicharacteristics due to the spread of frequency support is
less than $\l^{-\frac 23}$.

\begin{lemma}\label{spacefreqloc}
  Let $x_0(t)=x_0-t\,\partial_\xi a_{\l^{2/3}}(0,x_0,\xi_0)\,,$ and
  suppose that $|I|\le 2^{-m}\l^{-\frac 13}$. Then for $l\le n$, and
  general $m,n,x_0,\xi_0$,
  \begin{multline*}
    \sum_{j=0}^n\sum_{k=0}^l
    \l^{\frac {2k}3 }\bigl(\l^{-\frac 23}2^{-m}\bigr)^j\|(x-x_0(t))^k(D-\xi_0)^j u\|_{L^\infty L^2(I\times\R)}\\
    \le C_n \sum_{j=0}^n\sum_{k=0}^l \l^{\frac {2k}3}\bigl(\l^{-\frac
      23}2^{-m}\bigr)^j\|(x-x_0)^k(D-\xi_0)^j u_0\|_{L^2(\R)}\,.
  \end{multline*}
\end{lemma}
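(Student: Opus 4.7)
My plan is induction on $l$, with the base case $l=0$ (all $j\le n$) directly given by Lemma \ref{freqloc}. For the inductive step, I use the Weyl calculus identity $[A,x]=\tfrac{1}{i}(\partial_\xi a_{\lambda^{2/3}})^w$ (exact, since $x$ is linear in the phase-space variables) together with $[D_t,x-x_0(t)]=i\dot x_0(t)$ to obtain, with $\dot x_0(t)=-\partial_\xi a_{\lambda^{2/3}}(0,x_0,\xi_0)$,
\begin{equation*}
  [D_t-A,\,x-x_0(t)] = -\tfrac{1}{i}\bigl[(\partial_\xi a_{\lambda^{2/3}})^w - \partial_\xi a_{\lambda^{2/3}}(0,x_0,\xi_0)\bigr].
\end{equation*}
A Taylor expansion of $g := \partial_\xi a_{\lambda^{2/3}}$ about $(0,x_0,\xi_0)$ gives
\begin{equation*}
  g(t,x,\xi) - g(0,x_0,\xi_0) = t\,\tilde g_1 + (x-x_0)\,\tilde g_2 + (\xi-\xi_0)\,\tilde g_3
\end{equation*}
with integral Taylor remainders. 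The $C^1S_{\lambda,\lambda}$ structure of the original $a$, which passes through the Fourier truncation for first-order derivatives, yields $\tilde g_1,\tilde g_2$ bounded in $S_{\lambda,\lambda^{2/3}}$, while \eqref{acond1} forces $\tilde g_3\in\lambda^{-1}S_{\lambda,\lambda^{2/3}}$.

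Substituting $(x-x_0)=(x-x_0(t))+t\,\partial_\xi a_{\lambda^{2/3}}(0,x_0,\xi_0)$ and applying the Weyl calculus identities (exact for linear factors), the commutator decomposes into three error types whose $L^2\to L^2$ norms at time $t$ are controlled respectively by $C|t|$, by a bounded operator composed with multiplication by $(x-x_0(t))$, and by a $C\lambda^{-1}$-bounded operator composed with $(D-\xi_0)$. Scaling by $\lambda^{2/3}$ to pass to $X_n:=\lambda^{2/3}(x-x_0(t))$ and $\Xi_n:=\lambda^{-2/3}2^{-m}(D-\xi_0)$, and integrating over $|I|\le 2^{-m}\lambda^{-1/3}$, the three contributions become respectively $O(2^{-2m})\|u\|_{L^\infty L^2}$, $O(2^{-m}\lambda^{-1/3})\|X_nu\|_{L^\infty L^2}$ (absorbable for large $\lambda$), and $O(1)\|\Xi_nu\|_{L^\infty L^2}$. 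The Duhamel formula combined with $L^2$-unitarity of $S(t,0)$ then yields
\begin{equation*}
  \|X_nu\|_{L^\infty L^2(I\times\R)} \lesssim \|X_nu_0\|_{L^2} + \|u\|_{L^\infty L^2(I\times\R)} + \|\Xi_nu\|_{L^\infty L^2(I\times\R)},
\end{equation*}
and Lemma \ref{freqloc} controls the last two norms by initial data. The short-time constraint $|I|\le 2^{-m}\lambda^{-1/3}$ is essential here, precisely so that the $(D-\xi_0)$-generated error integrates to $O(1)\|\Xi_nu\|_{L^\infty L^2}$ rather than to a larger multiple.

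For general $k\le l$ and $j\le n$, one iterates this commutator identity on $X_n^k\Xi_n^ju$, also commuting $\Xi_n$'s past $D_t-A$; each such commutation costs only $\lesssim 2^{-m}\lambda^{1/3}$ at the operator level, which integrates to $\lesssim 2^{-2m}$ and is absorbable. After $k$ commutations one is reduced to bounding norms $\|X_n^{k'}\Xi_n^{j'}u\|_{L^\infty L^2}$ with $k'<k\le l$, handled by the inductive hypothesis together with Lemma \ref{freqloc}. The condition $l\le n$ ensures that the $\Xi_n$-budget stays within the claimed range throughout this iterative chain. The principal obstacle is the bookkeeping of the iterated symbol-class estimates and Weyl-calculus error terms, verifying at each step that the precise $L^2\to L^2$ norm of the intermediate operators combines with the available integration gain from $|I|\le 2^{-m}\lambda^{-1/3}$ to yield only norms falling within the initial-data range permitted by the statement.
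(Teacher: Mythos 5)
Your argument is essentially the paper's: the same exact commutator identity $i\,[x-x_0(t),D_t-A]=(\partial_\xi a_{\l^{2/3}})^w-\partial_\xi a_{\l^{2/3}}(0,x_0,\xi_0)$, the same Taylor split of this difference into a $t$-piece, an $(x-x_0)$-piece and a $(\xi-\xi_0)$-piece with coefficients in $S_{\l,\l^{2/3}}$ (respectively $\l^{-1}S_{\l,\l^{2/3}}$ for the last), and the same use of Duhamel over $|I|\le 2^{-m}\l^{-\frac13}$ together with Lemma \ref{freqloc}. The one substantive difference is the $(x-x_0)$-piece: the paper deliberately keeps the fixed center and the weaker weight $\l^{\frac13}(x-x_0)$, which is precisely what Corollary \ref{c:weight} was set up to control, whereas you pass to the moving-center weight and absorb it using the smallness $2^{-m}\l^{-\frac13}$ of the time integral. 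Your variant is legitimate (the a priori finiteness needed for absorption is available since $u_0\in\mathcal S$), and it trades the appeal to Corollary \ref{c:weight} for a bootstrap; the sign in $x-x_0=(x-x_0(t))-t\,\partial_\xi a_{\l^{2/3}}(0,x_0,\xi_0)$ is the opposite of what you wrote, but that is immaterial since the term is treated as an $O(|t|)$ error either way.

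The one claim you make that is not justified is that ``$l\le n$ ensures that the $\Xi_n$-budget stays within the claimed range.'' It does not: the $(\xi-\xi_0)$-piece of the commutator converts a factor of $\l^{\frac23}(x-x_0(t))$ into a factor of $\l^{-\frac23}2^{-m}(D-\xi_0)$ with coefficient $\l^{\frac13}2^m$, which after integration over $|I|\le 2^{-m}\l^{-\frac13}$ is $O(1)$ --- not a small error that can be absorbed. So the recursion for $\|(x-x_0(t))^k(D-\xi_0)^j u\|$ passes through $\|(x-x_0(t))^{k-1}(D-\xi_0)^{j+1}u\|$ and ultimately, via Lemma \ref{freqloc}, requires data norms with frequency weights of order up to $j+k\le n+l$, not $n$; the hypothesis $l\le n$ only caps this at $2n$. (That the conversion is genuinely there, and not a proof artifact, is already visible in the constant-coefficient model $a(\xi)=\l-\xi^2/2\l$, where conjugating $\l^{\frac23}x$ by the flow over a time of length $2^{-m}\l^{-\frac13}$ produces exactly an additional $O(1)$ multiple of $\l^{-\frac23}2^{-m}(D-\xi_0)$.) The paper's own one-line reduction ``proceeds along the lines of Lemma \ref{freqloc}'' has the same structure and quietly carries the same feature; in all applications the data are frequency localized at scale $2^m\l^{\frac23}$ about $\xi_0$, so the extra frequency weights on the data are harmless there. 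But in a self-contained write-up you should either enlarge the right-hand side to include $j\le n+l$, or restrict the left-hand indices to $j+k\le n$, rather than attribute the closing of the induction to $l\le n$.
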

\begin{proof}
  We write
$$
i\,[x-x_0(t),D_t-A]= (\partial_\xi a_{\l^{2/3}})^w(t,x,D)-\partial_\xi
a_{\l^{2/3}}(0,x_0,\xi_0)\,,
$$
and taking a Taylor expansion write
\begin{multline*}
  \l^{\frac23}
  \bigl(\partial_\xi a_{\l^{2/3}}(t,x,\xi)-\partial_\xi a_{\l^{2/3}}(0,x_0,\xi_0)\bigr)=\\
  \l^{\frac 13}2^m\Bigl( b_1(t,x,\xi)\l^{\frac
    13}(x-x_0)+b_2(t,x,\xi)\l^{\frac 13} t +b_3(t,x,\xi)\l^{-\frac
    23}2^{-m}(\xi-\xi_0)\Bigr)\,,
\end{multline*}
with $b_j\in S_{\l,\l^{2/3}}$, where we use $2^m\ge 1$. Additionally,
commuting with $\l^{\frac 23}x$ preserves the class of $b^w(t,x,D)$
with $b\in S_{\l,\l^{2/3}}$. The proof now proceeds along the lines of
the proof of Lemma \ref{freqloc}, using that $|I|\le\l^{-\frac
  13}2^{-m}$.
\end{proof}

We remark that the proof of Lemma \ref{spacefreqloc} in fact shows
that one may bound
\begin{multline*}
  \sum_{j=0}^n\sum_{k=0}^l
  \l^{\frac {2k}3 }\bigl(\l^{-\frac 23}2^{-m}\bigr)^j\|(D_t-A)(x-x_0(t))^k(D-\xi_0)^j u\|_{L^1 L^2(I\times\R)}\\
  \le C_n \sum_{j=0}^n\sum_{k=0}^l \l^{\frac {2k}3}\bigl(\l^{-\frac
    23}2^{-m}\bigr)^j\|(x-x_0)^k(D-\xi_0)^j u_0\|_{L^2(\R)}\,,
\end{multline*}
provided $l\le n$ and $|I|\le 2^{-m}\l^{-\frac 13}$, or
$|I|\le\l^{-\frac 13}$ in case $l=0$. We thus can prove weighted
Strichartz estimates as an easy corollary of the unweighted
version. We state the result for $p=q=6$, but it holds for all
allowable values of $(p,q)$ for which the unweighted version holds.

\begin{theorem}\label{weightedStrichartz}
  Let $x_0(t)=x_0-t\,\partial_\xi a_{\l^{2/3}}(0,x_0,\xi_0)\,,$ and
  suppose that $|I|\le 2^{-m}\l^{-\frac 13}$. Then for $l\le n$, and
  general $m,n,x_0,\xi_0$,
  \begin{multline*}
    \sum_{j=0}^n\sum_{k=0}^l
    \l^{\frac {2k}3 }\bigl(\l^{-\frac 23}2^{-m}\bigr)^j\|(x-x_0(t))^k(D-\xi_0)^j u\|_{L^6(I\times\R)}\\
    \le C_n\,\l^{\frac 16} \sum_{j=0}^n\sum_{k=0}^l \l^{\frac
      {2k}3}\bigl(\l^{-\frac 23}2^{-m}\bigr)^j\|(x-x_0)^k(D-\xi_0)^j
    u_0\|_{L^2(\R)}\,.
  \end{multline*}
  If $l=0$, then the result holds for $|I|\le\l^{-\frac 13}$.
\end{theorem}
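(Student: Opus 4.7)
The plan is to deduce Theorem \ref{weightedStrichartz} from the unweighted Strichartz estimate for the equation $(D_t - A)w = f$, combined with the commutator bookkeeping already carried out in Lemma \ref{spacefreqloc} and the remark following it. Specifically, I would invoke as a black box the standard $(p,q) = (6,6)$ endpoint in one spatial dimension on the $\l^{-1/3}$ scale,
\[
\|w\|_{L^6(I\times\R)} \lesssim \l^{1/6}\bigl(\|w(0,\cdot)\|_{L^2(\R)} + \|(D_t - A)w\|_{L^1 L^2(I\times\R)}\bigr)\,,
\]
valid for $|I| \le \l^{-1/3}$. This is the natural Strichartz bound for the half-wave type operator $D_t - A$ with $A = a_{\l^{2/3}}^w(t,x,D)$, whose principal part has size $\l$ and whose symbol has $\xi$-curvature of order $\l^{-1}$, as recorded in \eqref{acond1}.

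The second step is to apply the displayed inequality to each of the weighted derivatives
\[
w_{k,j}(t,x) := (x - x_0(t))^k (D - \xi_0)^j u(t,x)\,, \qquad 0 \le j \le n,\; 0 \le k \le l\,.
\]
Evaluating at $t = 0$ gives $w_{k,j}(0,\cdot\,) = (x - x_0)^k (D - \xi_0)^j u_0$, so after multiplication by the weights $\l^{2k/3}(\l^{-2/3} 2^{-m})^j$ and summation over $(k,j)$ the initial-data term already reproduces the right-hand side of the target inequality. The inhomogeneous contribution $(D_t - A)w_{k,j}$ is exactly what the remark immediately following Lemma \ref{spacefreqloc} controls: under the hypotheses $l \le n$ and $|I| \le 2^{-m}\l^{-1/3}$ (or $l = 0$ and $|I| \le \l^{-1/3}$), the weighted sum of its $L^1 L^2$ norms is dominated by the same right-hand side. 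Assembling the two contributions produces the claimed bound, with the $L^\infty L^2$ estimate of Lemma \ref{spacefreqloc} playing only an auxiliary role (ensuring that the Duhamel formula and the Strichartz bound are consistent).

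The only genuinely substantive input is the unweighted Strichartz estimate itself, which the paper implicitly treats as known on the $\l^{-1/3}$ scale and for symbols in $\l\,C^1 S_{\l,\l^{2/3}}$; this is the step I would flag as the main obstacle, since it requires a parametrix or $TT^*$ argument adapted to symbols with only limited regularity in $(t,x)$. Granting it, the remainder is mechanical: apply Strichartz to each $w_{k,j}$, bound the inhomogeneity via the remark after Lemma \ref{spacefreqloc}, multiply by the indicated weights, sum over $j \le n$ and $k \le l$, and read off the estimate. The restriction $l \le n$ is forced at the level of Lemma \ref{spacefreqloc}, where each commutator with $(x - x_0(t))$ can trade a power of position for a factor of $(D - \xi_0)/\l^{2/3}$, consuming one unit of the frequency budget indexed by $j$; when $l = 0$ no such trade is needed, which is why the weaker interval length $|I| \le \l^{-1/3}$ suffices.
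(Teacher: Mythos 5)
Your argument is essentially the paper's own proof: the paper likewise reduces Theorem \ref{weightedStrichartz} by Duhamel to the unweighted case $n=l=0$, using precisely the remark after Lemma \ref{spacefreqloc} to control the weighted $L^1L^2$ norms of $(D_t-A)(x-x_0(t))^k(D-\xi_0)^j u$ under the stated constraints $l\le n$, $|I|\le 2^{-m}\l^{-\frac13}$ (or $l=0$, $|I|\le\l^{-\frac13}$). The one ingredient you flag as the main obstacle, the unweighted Strichartz estimate for $A=a^w_{\l^{2/3}}$ on the $\l^{-\frac13}$ time scale, is also not proved in the paper but cited to \cite[Theorem 2.5]{KT}, with an alternate argument in \cite{BSS}, so treating it as a black box is consistent with the intended proof.
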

\begin{proof}
  By the above remarks, the result follows by the Duhamel theorem from
  the case $n=l=0$.  That case, in turn, follows from \cite[Theorem
  2.5]{KT}. An alternate proof is contained in \cite{BSS}. The paper
  \cite{BSS} dealt with $\l^{-1}\Delta_\g$ instead of $A$, but the
  analysis is similar for $A$ as above.
\end{proof}

If we take $m=0$, then Lemma \ref{spacefreqloc} applies to the
evolution of a $\l^{-\frac 23}$ packet. The following should be
compared to \cite[Proposition 4.3]{KT}.
\begin{theorem}\label{tubeloc}
  Suppose that $\phi$ is a Schwartz function, and $\phi_T=\l^{\frac
    13}e^{ix\xi_T}\phi(\l^{\frac 23}(x-x_T))$.  Let $v_T$ satisfy
$$
\bigl(D_t - A\bigr) v_T = 0 \,, \qquad v_T(0,\cdot\,) = \phi_T\,.
$$
Then with
$$
x_T(t)=x_T-t\,\partial_\xi a_{\l^{2/3}}(0,x_T,\xi_T)\,,
$$
for $t\in[0,\l^{-\frac 13}]$ one can write
$$v_T(t,\cdot)=
\l^{\frac 13}e^{ix\xi_T}\psi_T\bigl(t,\l^{\frac 23}(x-x_T(t))\bigr)\,,
$$
where $\{\psi_T(t,\cdot)\}_{t\in I}$ is a bounded family of Schwartz
functions on $\R$, with all Schwartz norms uniformly bounded over $T$,
$t\in I$, and $\lambda\ge 1$.
\end{theorem}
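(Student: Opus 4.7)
The assertion is that, after rescaling around the bicharacteristic $x_T(t)$ and after removing the phase $e^{ix\xi_T}$, the packet $v_T$ is a uniformly Schwartz function of $y = \lambda^{2/3}(x - x_T(t))$. Concretely, set
\[
\psi_T(t,y) := \lambda^{-\frac13}e^{-i(y\lambda^{-2/3}+x_T(t))\xi_T}\,v_T\bigl(t,\,y\lambda^{-\frac23}+x_T(t)\bigr)\,,
\]
so that $v_T(t,x) = \lambda^{\frac13}e^{ix\xi_T}\psi_T\bigl(t,\lambda^{\frac23}(x-x_T(t))\bigr)$ by construction. The task then reduces to bounding $\|y^k\partial_y^j\psi_T(t,\cdot)\|_{L^\infty(\R)}$ uniformly in $t\in I$, $T$, and $\lambda\ge 1$ for every $j,k\ge 0$.

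The plan is to invoke Lemma \ref{spacefreqloc} with $m=0$, $x_0=x_T$, $\xi_0=\xi_T$, and $u=v_T$ on the interval $I=[0,\lambda^{-1/3}]$, which satisfies $|I|\le\lambda^{-1/3}$ as required. A direct change of variables $y=\lambda^{2/3}(x-x_T(t))$, combined with the identity $D[e^{-ix\xi_T}f] = e^{-ix\xi_T}(D-\xi_T)f$, yields the scaling relation
\[
\|y^k D_y^j\psi_T(t,\cdot)\|_{L^2(\R)} = \lambda^{\frac{2k}{3}}\bigl(\lambda^{-\frac23}\bigr)^j\,\|(x-x_T(t))^k(D-\xi_T)^j v_T(t,\cdot)\|_{L^2(\R)}\,.
\]
The analogous identity at $t=0$ reduces the initial-data seminorms appearing on the right of Lemma \ref{spacefreqloc} to $\|y^k\phi^{(j)}(y)\|_{L^2(\R)}$, which is finite for every $j,k\ge 0$ since $\phi$ is Schwartz. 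Applying the lemma with $l=k$ and $n=\max(j,k)$ thus gives, uniformly in $T$ and $\lambda$,
\[
\sup_{t\in I}\,\|y^k D_y^j\psi_T(t,\cdot)\|_{L^2(\R)}\le C_{j,k}\,.
\]

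To upgrade these $L^2$ seminorm bounds to pointwise Schwartz bounds one applies the one-dimensional Sobolev embedding $\|f\|_{L^\infty(\R)}^2\lesssim \|f\|_{L^2(\R)}\|f'\|_{L^2(\R)}$ to $f=y^k\partial_y^j\psi_T(t,\cdot)$; the Leibniz rule expresses $f'$ as a linear combination of terms of the same type $y^{k'}\partial_y^{j'}\psi_T$ with $k'\le k$ and $j'\le j+1$, each of which is already controlled by the preceding step. Iterating this yields uniform bounds on all $\|y^k\partial_y^j\psi_T(t,\cdot)\|_{L^\infty(\R)}$, which is precisely the assertion that $\{\psi_T(t,\cdot)\}_{t\in I}$ is a bounded family of Schwartz functions, with norms uniform in $T$ and in $\lambda\ge 1$.

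The only genuinely nontrivial ingredient is Lemma \ref{spacefreqloc}, which is already established. The remaining work is the clean algebraic bookkeeping of how weights in $x$ and derivatives in $D$ transform under the rescaling $y=\lambda^{2/3}(x-x_T(t))$ and the conjugation by $e^{ix\xi_T}$, plus a standard Sobolev conversion from $L^2$ to $L^\infty$; this is mechanical and presents no additional analytic obstacle.
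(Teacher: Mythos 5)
Your proposal is correct and matches the paper's argument: the paper proves Theorem \ref{tubeloc} precisely by the remark that Lemma \ref{spacefreqloc} with $m=0$, $x_0=x_T$, $\xi_0=\xi_T$ controls the weighted seminorms $\l^{\frac{2k}{3}}\l^{-\frac{2j}{3}}\|(x-x_T(t))^k(D-\xi_T)^j v_T\|_{L^\infty L^2}$, which after the rescaling/conjugation you describe are exactly the $L^2$ Schwartz seminorms of $\psi_T(t,\cdot)$. Your bookkeeping of the change of variables and the Sobolev upgrade from $L^2$ to $L^\infty$ seminorms correctly fills in the routine details the paper leaves implicit.
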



We conclude this section with the proof of Lemma~\ref{l:packet}.  Let
$P_0$ denote the bounded linear functional on $L^2(\R)$ defined by
$P_0f=2^{-m}\langle w_0,f\rangle w_0$, where $w_0$ is a sum of $2^m$
$L^2$-bounded packets centered at $x_0$ with disjoint frequency
centers; that is,
$$
w_0=\sum_{l=1}^{2^m} \l^{\frac 13}e^{ix\xi_l}\psi_l\bigl(\l^{\frac
  23}(x-x_0)\bigr)\,,
$$
where the $\psi_l$ are a bounded collection of Schwartz functions, and
the $\xi_l$ are distinct points on the $\l^{\frac 23}$-spaced lattice
in $\R$, with $|\xi_l|\le \frac 12\l$.  We take $P_1$ to be similarly
defined where $x_0$ is replaced by $x_1$, possibly with a different
set of $\xi_l$ and $\psi_l$.  We need to prove
\begin{equation}
  \| P_1 S(t_1,t_0) P_0\|_{L^2(\R) \to L^2(\R)} \lesssim 
  { 2^{-m}\alpha }\,{ \langle\log (2^{-m}\alpha)\rangle }\,,
  \label{ao2}
\end{equation}
where $\alpha=\max\bigl(\l^{-\frac 13}|t_1-t_0|^{-1},\l^{\frac
  13}|t_1-t_0|\bigr)$.

The proof of \eqref{ao2} requires control of the solution $u$ over
times greater than $\l^{-\frac 13}$, which we will express in the form
of weighted energy estimates. Heuristically, for $t\le\l^{-\frac 13}$
one can localize energy flow at the symplectic $\l^{\frac 23}$
scale. For $t\ge\l^{-\frac 13}$, energy flow cannot be localized finer
than the uncertainty in the Hamiltonian flow, where $\xi$ is
determined only within $\l t$, with resulting uncertainty in $x$.  In
the notation below, our weighted energy estimate localizes $\xi$ to
within $\delta\l$, and $x$ to within $\delta^2$. The linear growth of
the weights reflects the Lipschitz regularity of $a_\l(t,x,\xi)$.

In the following, we let
$$
\delta=\begin{cases}\l^{-\frac 13}\,,& |t_0-t_1|\le \l^{-\frac 13}\,,\\
  |t_0-t_1|\,,& |t_0-t_1|\ge \l^{-\frac 13}\,.
\end{cases}
$$
Let $q_j(t,x,D)$, $j=0,1$, denote the symbol
$$
q_j(t,x,\xi)=\delta^{-2}\bigl(x-x_j+(t-t_j)\,\partial_\xi
a_\l(t_j,x_j,\xi)\bigr)\,,
$$
and set $Q_j(t)=q_j(t,x,D)$.  We will prove that
\begin{equation}\label{weightedest}
  \|Q_0(t_1)S(t_1,t_0)f\|_{L^2(\R)}\lesssim \|f\|_{L^2(\R)}
  +\|Q_0(t_0)f\|_{L^2(\R)} +\delta^{-1}\|(x-x_0)f\|_{L^2(\R)}\,.
\end{equation}

Assuming \eqref{weightedest} for the moment, we consider the constant
coefficient symbols
$$
m_0(\xi)=\langle q_0(t_1,x_1,\xi)\rangle\,,\qquad\quad
m_1(\xi)=\langle q_1(t_0,x_0,\xi)\rangle\,.
$$
We will use \eqref{weightedest} to prove that
\begin{equation}\label{weightedest2}
  \bigl\|m_0(D)^{\frac 12}\langle\delta^{-2}(x-x_1)\rangle^{-2}S(t_1,t_0)
  \langle\delta^{-2}(x-x_0)\rangle^{-2}
  m_1(D)^{\frac 12}f\bigr\|_{L^2(\R)}\lesssim \|f\|_{L^2(\R)}\,,
\end{equation}
with bounds uniform over the various parameters.  We then factor
\begin{multline*}
  P_1S(t_1,t_0)P_0=
  P_1\langle\delta^{-2}(x-x_1)\rangle^{2}m_0(D)^{-\frac
    12}m_0(D)^{\frac 12}
  \langle\delta^{-2}(x-x_1)\rangle^{-2}\\
  S(t_1,t_0) \langle\delta^{-2}(x-x_0)\rangle^{-2} m_1(D)^{\frac
    12}m_1(D)^{-\frac 12}\langle\delta^{-2}(x-x_0)\rangle^{2}P_0\,,
\end{multline*}
which reduces \eqref{ao2} to showing that
\begin{equation}\label{weightedest3}
  \bigl\|P_1\langle\delta^{-2}(x-x_1)\rangle^{2}m_0(D)^{-\frac 12}f\bigr\|_{L^2(\R)}^2\lesssim 
  { 2^{-m}\alpha }\,{ \langle\log (2^{-m}\alpha)\rangle }\|f\|_{L^2(\R)}^2\,,
\end{equation}
where we use symmetry and adjoints to conclude that the rightmost
factor above satisfies the same bounds. Since $P_1 f=2^{-m}\langle
w_1,f\rangle w_1$, and $\|w_1\|_{L^2(\R)}\approx 2^{\frac m2}$,
\eqref{weightedest3} is implied by
\begin{equation}\label{wbound}
  \bigl\|m_0(D)^{-\frac 12}\langle\delta^{-2}(x-x_1)\rangle^{2}w_1\|_{L^2(\R)}^2\lesssim
  \alpha\,{ \langle\log (2^{-m}\alpha)\rangle }\,.
\end{equation}
To prove \eqref{wbound}, note that since $\delta\ge\l^{-\frac 13}$,
and the packets in $w_1$ are centered at $x_1$, the function
$\langle\delta^{-2}(x-x_1)\rangle^{2}w_1$ is of the same form as
$w_1$. The Fourier transform of $w_1$ is a sum of $2^m$
$L^2$-normalized Schwartz functions, concentrated on the $\l^{\frac
  23}$-scale about the distinct $\xi_l$, and the left hand side of
\eqref{wbound} can thus be compared to
$$
\l^{-\frac 23}\int\,\Bigl|\,\sum_l m_0(\xi)^{-\frac
  12}\bigl(1+\l^{-\frac 23}|\xi-\xi_l|\bigr)^{-N} \Bigr|^2\,d\xi
\lesssim \sum_l m_0(\xi_l)^{-1}\,.
$$
By \eqref{acond1},
$$
\partial_\xi a_\l(t_0,x_0,\xi_l)-\partial_\xi
a_\l(t_0,x_0,\xi_{l'})\approx \l^{-1}(\xi_{l'}-\xi_{l})\,.
$$
Since $\l^{\frac 13}\delta^{2}|t_1-t_0|^{-1}=\alpha\ge 1$, the
estimate \eqref{wbound} then follows by comparison to the worst case
sum
$$
\sum_{j=0}^{2^m}(1+\alpha^{-1} j)^{-1}\lesssim
\alpha\bigr(1+|\log(2^m\alpha^{-1})|\,)\,.
$$

To see that equation \eqref{weightedest2} is a consequence of
\eqref{weightedest}, we observe that \eqref{weightedest2} follows by
interpolation from showing, with uniform bounds,
\begin{equation}\label{weightedest4}
  \begin{split}
    \bigl\|m_0(D)\langle\delta^{-2}(x-x_1)\rangle^{-2}S(t_1,t_0)\langle\delta^{-2}(x-x_0)\rangle^{-2}
    f\bigr\|_{L^2(\R)} &\lesssim\|f\|_{L^2(\R)}\,,
    \\
    \bigl\|\langle\delta^{-2}(x-x_1)\rangle^{-2}S(t_1,t_0)\langle\delta^{-2}(x-x_0)\rangle^{-2}m_1(D)
    f\bigr\|_{L^2(\R)} &\lesssim \|f\|_{L^2(\R)}\,.\rule{0pt}{17pt}
  \end{split}
\end{equation}
The second line follows from the first by symmetry and adjoints, so we
prove the estimate of the first line in \eqref{weightedest4}.  We
first note that
$$
\|m_0(D)\langle\delta^{-2}(x-x_1)\rangle^{-2}g\|_{L^2(\R)}\lesssim
\|g\|_{L^2(\R)}+
\|Q_0(t_1)\langle\delta^{-2}(x-x_1)\rangle^{-2}g\|_{L^2(\R)}\,.
$$
The commutator of $Q_0(t_1)$ and
$\langle\delta^{-2}(x-x_1)\rangle^{-2}$ is bounded on $L^2(\R)$; this
uses the fact that $\partial_\xi^2 a_\l(t,x,\xi)\in\l^{-1}S_{\l,\l}$,
that $|t_1-t_0|\le\delta$, and that $\delta^{-3}\le\l$.  Thus,
\eqref{weightedest4} reduces to showing that
$$
\|Q_0(t_1)S(t_1,t_0)\langle\delta^{-2}(x-x_0)\rangle^{-2}f\|_{L^2(\R)}\lesssim
\|f\|_{L^2(\R)}\,,
$$
which follows from \eqref{weightedest}, since the purely spatial
weight $Q_0(t_0)\langle\delta^{-2}(x-x_0)\rangle^{-2}$ is bounded, and
$\delta\le 1$.

To establish \eqref{weightedest}, we calculate
$$
\partial_t\|Q_0(t)S(t,t_0)f\|_{L^2(\R)}^2= 2\,\text{Re}\, \bigl\langle
(\partial_tQ_0+i[Q_0,a_\l^w(t,x,D)])S(t,t_0)f,Q_0(t)S(t,t_0)f\bigr\rangle\,,
$$
so since $|t_1-t_0|\le\delta$ it suffices to show that
$$
\|(\partial_t Q_0+i[Q_0,a_\l^w(t,x,D)])S(t,t_0)f\|_{L^2(\R)} \lesssim
\delta^{-1}\|f\|_{L^2(\R)}+\delta^{-2}\|(x-x_0)f\|_{L^2(\R)}\,.
$$
The operator $\partial_tQ_0+i[Q_0,a_\l^w(t,x,D)]$ is equal to
$$
\delta^{-2}\bigl(\partial_\xi a_\l(t_0,x_0,D)-\partial_\xi
a_\l^w(t,x,D)\bigr) +i\delta^{-2}(t-t_0)\bigl[\partial_\xi
a_\l(t_0,x_0,D),a_\l^w(t,x,D)\bigr]\,.
$$
The commutator term is bounded on $L^2$ since $a_\l\in \l C^1
S_{\l,\l}$, and $\partial_\xi^2 a_\l(t_0,x_0,\xi)\in \l^{-1} C^1
S_{\l,\l}$.  Since $|t-t_0|\le\delta$, the second term is thus bounded
by $\delta^{-1}$.  The $L^2$ norm of the first term is bounded by
$\delta^{-1}+\delta^{-2}(x-x_0)$, so we have to bound
$$
\delta^{-1}\|(x-x_0)S(t,t_0)f\|_{L^2(\R)}\lesssim
\|f\|_{L^2(\R)}+\delta^{-1}\|(x-x_0)f\|_{L^2(\R)}\,.
$$
This follows by Corollary \ref{c:weight}, since
$\delta^{-1}\le\l^{\frac 13}$.  \qed


\newsection{Bilinear $L^2$ estimates}\label{sec:bilinear} We prove
here the bilinear estimate Lemma \ref{bilinearlemma}. For this
section, we will let
$$
\chi_T(t,x)=\bigl(1+\l^{\frac23}|x-x_T(t)|\bigr)^{-N}
$$ 
for some suitably large but fixed $N$, and use the fact that
$|v_T|\lesssim \l^{\frac13}\chi_T$, by Theorem \ref{tubeloc}.  Also in
this section we let
$$
a(t,x,\xi)=a_{\l^{2/3}}(t,x,\xi)\,,\quad a_\xi(t,x,\xi)=\partial_\xi
a_{\l^{2/3}}(t,x,\xi)\,.
$$

We first reduce Lemma \ref{bilinearlemma} to the case that the $c_T$
are constants, that is, to the following lemma. In the case that $a$
is independent of $(t,x)$ the following lemma is a simple consequence
of the proof of the restriction theorem in two dimensions; see for
example \cite[Section IX.5]{Stein}.

\begin{lemma}\label{bilinearlemmab}
  Suppose that $b_T\,,d_S\in\C$. Then, for any subset $\Lambda$ of
  tube pairs $(T,S)$ satisfying $\angle(T,S)\ge\theta$, the following
  bilinear $L^2$ bound holds,
  \begin{equation}\label{l2bib}
    \biggl\| \,\sum_{(T,S)\in\Lambda} b_T v_T\cdot  d_S v_S \,\biggr\|_{L^2} \lesssim
    \theta^{-\frac12}
    \Bigl(\,\sum_T\,|b_T|^2\Bigr)^\frac 12 
    \Bigl(\,\sum_S\,|d_S|^2\Bigr)^\frac 12
    \,.
  \end{equation}
  The norm is taken over the common $\l^{-\frac 13}$ time slice in
  which the tubes lie, and there is no restriction on the number of
  terms.
\end{lemma}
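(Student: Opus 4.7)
The plan is to adapt the proof of the bilinear restriction theorem in two dimensions, as suggested by the hint, to the variable coefficient setting. The core mechanism is the Jacobian lower bound $|\partial_\xi a(t,x,\xi_T)-\partial_\xi a(t,x,\xi_S)|\gtrsim \theta$, which follows from $\angle(T,S)\ge \theta$, i.e.\ $|\xi_T-\xi_S|\ge \l\theta$, together with the concavity bound $\partial_\xi^2 a_{\l^{2/3}}\approx -\l^{-1}$ from \eqref{acond1}.

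I would begin with the model case $a_{\l^{2/3}}=a(\xi)$. Modulo rapidly decaying errors, $v_T(t,x)$ coincides with $\int e^{i(x\xi+ta(\xi))}\hat\phi_T(\xi)\,d\xi$, so the space-time Fourier transform of $\sum_{(T,S)\in\Lambda}b_Td_Sv_Tv_S$ is
\begin{equation*}
G(\tau,\xi)=\int F(\xi_1,\xi)\,\delta\bigl(\tau-a(\xi_1)-a(\xi-\xi_1)\bigr)\,d\xi_1,\qquad F(\xi_1,\xi)=\sum_{\Lambda}b_Td_S\hat\phi_T(\xi_1)\hat\phi_S(\xi-\xi_1).
\end{equation*}
Resolving the $\delta$-function and changing variables via $(\xi_1,\xi)\mapsto (a(\xi_1)+a(\xi-\xi_1),\xi)$, whose Jacobian is $|a'(\xi_1)-a'(\xi-\xi_1)|$, one obtains by Plancherel
\begin{equation*}
\|G\|_{L^2_{\tau,\xi}}^2 \lesssim \int\!\!\int\frac{|F(\xi_1,\xi)|^2}{|a'(\xi_1)-a'(\xi-\xi_1)|}\,d\xi_1\,d\xi.
\end{equation*}
Because the $\hat\phi_T$ have essentially disjoint $\l^{2/3}$-supports, for each $(\xi_1,\xi)$ only $O(1)$ pairs $(T,S)$ contribute, and on that pair's support $|a'(\xi_1)-a'(\xi-\xi_1)|\approx \angle(T,S)\ge\theta$. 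The right-hand side collapses to $\theta^{-1}\sum_{\Lambda}|b_Td_S|^2\le \theta^{-1}\bigl(\sum|b_T|^2\bigr)\bigl(\sum|d_S|^2\bigr)$, giving the model case.

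To pass to variable coefficients I would freeze $a_{\l^{2/3}}(t,x,\xi)$ at a reference point $(t_\star,x_\star)$ in the $\l^{-\frac13}$ slab. Since $a_{\l^{2/3}}\in \l S_{\l,\l^{2/3}}$, the difference $a_{\l^{2/3}}(t,x,\xi)-a_{\l^{2/3}}(t_\star,x_\star,\xi)$ is of order $\l^{-\frac13}$ with favorable derivative bounds, so by Duhamel the flow $S(t,s)$ equals the frozen flow modulo an $L^2$-error controlled by the wave packet machinery of Section~\ref{sec:wavepacket}. Using the Schwartz profile from Theorem~\ref{tubeloc} to represent each $v_T$ in frozen coordinates, one applies the model case estimate and absorbs the resulting errors through the summability of $\sum|b_T|^2$ and $\sum|d_S|^2$ together with the Schwartz decay of $\psi_T$.

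The main obstacle will be to preserve the sharp $\theta^{-1/2}$ constant through the freeze. The Jacobian perturbation coming from the symbol variation is of order $\l^{-1}\cdot \l^{1/3}=\l^{-2/3}$ in $a'$, which is a relative perturbation of order $\theta^{-1}\l^{-2/3}$ to the bound $\theta$. In our applications $\theta\ge\l^{-1/3}2^m\ge\l^{-1/3}$, so this perturbation is benign; but the bookkeeping needed to verify that the error terms in the frozen-coefficient parametrix enjoy the required $L^2$ summability — in particular that their $L^2$ contribution carries an extra factor of $\theta^{-1/2}$ rather than $1$ — is the most delicate part of the argument and the step I would approach most carefully.
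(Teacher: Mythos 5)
Your constant-coefficient model computation is fine, and the paper itself concedes that case (it is the classical $TT^*$/restriction argument from Stein). The genuine gap is the passage to variable coefficients by freezing $a_{\l^{2/3}}$ at a single reference point $(t_\star,x_\star)$ of the slab. The quantitative claim underlying that step is wrong: since $a_{\l^{2/3}}\in \l\, C^1S_{\l,\l^{2/3}}$, one only has $|\partial_{t,x}a_{\l^{2/3}}|\lesssim \l$, so the variation of the symbol over the slab is of size up to $O(\l)$ in $x$ (the tubes are spread over the whole unit spatial interval) and $O(\l^{2/3})$ in $t$ alone --- not $O(\l^{-\frac13})$. Consequently the Duhamel error between the true and frozen flows is not perturbative on the $\l^{-\frac13}$ time scale, and it destroys the $\l^{-\frac23}$ spatial localization of the packets. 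Worse, the quantity you actually need to preserve is the group-velocity separation: $|\partial_x\partial_\xi a_{\l^{2/3}}|\lesssim 1$, so $a_\xi$ changes by $O(1)$ between $x_\star$ and a tube at unit distance (and by $O(\l^{-\frac13})$ from the time variation alone), whereas the transversality you must protect is only $\theta$, which in the application is as small as $\l^{-\frac13}$. Your figure $\l^{-1}\cdot\l^{\frac13}=\l^{-\frac23}$ for the perturbation of $a'$ is therefore off, and the Jacobian lower bound $|a'(\xi_1)-a'(\xi-\xi_1)|\gtrsim\theta$ is not stable under a global freeze. Since the whole content of the lemma is that the tubes are curved by the rough coefficients, the ``most delicate part'' you defer is in fact the entire proof, and the route you sketch does not close it.

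The paper's proof avoids freezing altogether and works directly with the variable-coefficient flow by almost orthogonality in physical space. Expanding $\|\sum b_Tv_T\,d_Sv_S\|_{L^2}^2$ into fourfold integrals $\int v_T v_S\overline{v_{T'}}\,\overline{v_{S'}}$, one introduces $L=D_t-a_\xi(t,x,\xi_m)D$ and the quantity $a_{(4')}$, which by the concavity $a_{\xi\xi}\approx-\l^{-1}$ in \eqref{acond1} satisfies $a_{(4')}\approx\l^{\frac13}(j^2-i^2)$ as in \eqref{a4est1}--\eqref{a4est2}; integrating by parts twice with $(L-a_{(4')})$ yields decay $\langle j-i\rangle^{-2}$ in the angular separation of the two pairs, while repeated integration by parts in $x$ (using $\xi_T+\xi_S-\xi_{T'}-\xi_{S'}=2\l^{\frac23}(m-n)$) yields \eqref{mnbound}; taking a geometric mean gives $\langle j-i\rangle^{-\frac32}\langle m-n\rangle^{-\frac32}$, and the factor $\theta^{-1}$ then comes from the transversal tube-intersection bound \eqref{tubeproduct} together with Schur's lemma. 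If you want to keep your Fourier-analytic viewpoint you would have to freeze along each bicharacteristic at the symplectic scale and control the interactions between different freezes, which essentially reproduces this almost-orthogonality scheme; as written, your argument has a genuine gap at the freezing step.
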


To make the reduction of Lemma \ref{bilinearlemma} to Lemma
\ref{bilinearlemmab}, we note that it suffices by Minkowski to
establish \eqref{l2bi} on an interval $J$ of size $2^{-k}\l^{-\frac
  13}$, including only the $c_{T,j}$ for which $I_{T,j}=J$.  On such
an interval we can write
$$
c_{T,j}(t)=c_{T,j}(t_0)+\int_0^t c'_{T,j}(s)\,ds\,,
$$
where $t_0$ is the left endpoint of $J$.  We can thus bound
\begin{multline*}
  \biggl|\,\sum c_{T,j}v_T\cdot c_{S,k}v_S\biggr|\le \biggl|\,\sum
  c_{T,j}(t_0)v_T\cdot c_{S,k}(t_0)v_S\biggr|+ \int_J \,\biggl|\,\sum
  c'_{T,j}(r)v_T\cdot c_{S,k}(t_0)v_S\biggr|\,dr\\+ \int_J
  \,\biggl|\,\sum c_{T,j}(t_0)v_T\cdot c'_{S,k}(s)v_S\biggr|\,ds+
  \int_{J\times J} \,\biggl|\,\sum c'_{T,j}(r)v_T\cdot
  c'_{S,k}(s)v_S\biggr|\,dr\,ds\,.
\end{multline*}

Bringing the integral out of the $L^2$ norm and applying \eqref{l2bib}
together with the Schwartz inequality yields the desired bound
\begin{equation*}
  \biggl\| \,\sum_{\angle(T,S) \ge \theta} c_{T,j} v_T\cdot  c_{S,k} v_S \,
  \biggr\|_{L^2(J\times\R)}
  \lesssim
  \theta^{-\frac12}\sum_{I_{T,j}=J}\,\Bigl(\, \|c_{T,j}\|_{L^\infty}^2+
  2^{-k}\l^{-\frac13}\|c'_{T,j}\|_{L^2}^2 \,\Bigr)
  \,.
\end{equation*}

We now turn to the proof of Lemma \ref{bilinearlemmab}.  One estimate
we will use is the following. Suppose that the tubes $T$ (respectively
$S$) all point in the same direction, that is, $\xi_T$ is the same for
all $T$, and $\xi_S$ is the same for all $S$, where $|\xi_T-\xi_S|\ge
\l\theta$. Then
\begin{equation}\label{tubeproduct}
  \int \Bigl(\sum_{T,S} |b_T|\chi_T\cdot |d_S|\chi_S\Bigr)^2\,dt\,dx\lesssim 
  \l^{-\frac 43}\theta^{-1}
  \Bigl(\sum_T |b_T|^2\Bigr)\Bigl(\sum_S |d_S|^2\Bigr)\,.
\end{equation}
This follows since different tubes $T$ (respectively $S$) are
disjoint, and the intersection of any pair of tubes $T$ and $S$ is a
$\l^{-\frac23}$ interval in $x$ times a $\theta^{-1}\l^{-\frac23}$
interval in $t$ about the center of the intersection.  Precisely, one
can make a change of variables of Jacobian $\l^{\frac43}\theta$ to
reduce matters to $\l=1$ and $\theta=\tfrac\pi 2$, where the result is
elementary.  We remark that since the terms are positive, this holds
even if the sum over $T$ and $S$ on the left includes just a subset of
the collection of all $T$ and $S$.

Consider now an integral
$$
\int v_T\,v_S\,\overline{v_{T'}}\,\overline{v_{S'}}\,dt\,dx\,.
$$
Recalling that $\l^{-\frac 23}\xi_T\in\Z$, we will relabel
$$
\xi_T=\xi_{m+j}\,,\quad \xi_S=\xi_{m-j}\,,\quad
\xi_{T'}=\xi_{n+i}\,,\quad \xi_{S'}=\xi_{n-i}\,,
$$
corresponding to $\xi_T=\l^{\frac 23}(m+j)$, etc.  Here, $m$, $n$,
$i$, and $j$ take on integer values; for simplicity we assume $m\ne n$
and $i\ne j$, and $i,j$ nonzero. The cases of equality are simpler in
what follows.  By symmetry we may take $j > i \ge 1$.  Assuming that
$\angle (T,S) \ge \theta$ implies $j\ge \l^{\frac 13}\theta$, and
similarly $\angle (T',S') \ge \theta$ implies $i\ge \l^{\frac
  13}\theta$.

We introduce the quantities
\begin{align*}
  v_{(4)}&=v_T\,v_S\,\overline{v_{T'}}\,\overline{v_{S'}}\,,\\
  \rule{0pt}{12pt}
  a_{(4)}&=a(t,x,\xi_{m+j})+a(t,x,\xi_{m-j})-a(t,x,\xi_{n+i})-a(t,x,\xi_{n-i})\,,\\
  \rule{0pt}{12pt}
  a_{(4')}&=a(t,x,\xi_{m+j})+a(t,x,\xi_{m-j})-a(t,x,\xi_{m+i})-a(t,x,\xi_{m-i})\,.
\end{align*}
Since $a_{\xi\xi}\approx\l^{-1}$, we have simultaneous upper and lower
bounds for $a_{(4')}$,
\begin{equation}\label{a4est1}
  a_{(4')}=\l^{\frac 23}\int_{i}^{j}a_\xi(t,x,\xi_{m+s})
  -a_\xi(t,x,\xi_{m-s})
  \;ds
  \approx
  \l^{\frac 13}\int_i^j s\,ds\approx
  \l^{\frac 13}(j^2-i^2)\,.
\end{equation}
We may similarly use $|\partial_{t,x}^{\alpha}a_{\xi\xi}|\lesssim
\l^{-1+\frac 23(|\alpha|-1)}$ for nonzero $\alpha$ to deduce
\begin{equation}\label{a4est2}
  \bigl|\partial_{t,x}^\alpha a_{(4')}\bigr|\lesssim 
  \l^{\frac 13+\frac 23(|\alpha|-1)}(j^2-i^2)\,,\qquad |\alpha|\ge 1\,.
\end{equation}
To control the difference of $a_{(4)}$ and $a_{(4')}$, we introduce
the quantity
\begin{align*}
  r_{(4)}&=a_{(4)}-a_{(4')}-2(\xi_m-\xi_n)a_\xi(t,x,\xi_m)\\
  \rule{0pt}{12pt} &=\sum_{\pm} a(t,x,\xi_{m \pm i})-a(t,x,\xi_{n \pm
    i})-(\xi_{m \pm i}-\xi_{n \pm i})\, a_\xi(t,x,\xi_m)\,,
\end{align*}
which by a Taylor expansion in $\xi$ is seen to satisfy
\begin{equation*}
  \bigl|\partial_{t,x}^\alpha r_{(4)}\bigr|\lesssim 
  \l^{\frac 13+\frac 23\max(0,|\alpha|-1)}(i+|m-n|)\,|m-n|\,.
\end{equation*}

Using a Taylor expansion of $a(t,x,\xi)$ about $\xi=\xi_T$, we can
write
\begin{equation*}
  a^w(t,x,D)v_T=
  \bigl[
  a(t,x,\xi_T)+a_\xi(t,x,\xi_T)(D-\xi_T)+r^w(t,x,D)
  \bigr]v_T\,,
\end{equation*}
where $\l^{-\frac 13}r^w(t,x,D)$ applied to a $\l^{\frac 23}$-scaled
Schwartz function with frequency center $\xi_T$, such as $v_T$, yields
a Schwartz function of comparable norm, uniformly over $\l$.

We then write
\begin{multline*}
  a_\xi(t,x,\xi_T)(D-\xi_T)v_T=
  \bigl[a_\xi(t,x,\xi_T)-a_\xi(t,x,\xi_m)\bigr](D-\xi_T)v_T\\
  +a_\xi(t,x,\xi_m)(D-\xi_T)v_T\,.
\end{multline*}
The same expansions hold with $\xi_T$ replaced by $\xi_S$, $\xi_{T'}$,
and $\xi_{S'}$.

Replacing $\xi_T$ by any of $\xi_{m\pm j}$ or $\xi_{n\pm i}$, the
function $a_\xi(t,x,\xi_T)-a_\xi(t,x,\xi_m)$ satisfies
$$
\bigl|\partial_{t,x}^\alpha\bigl(a_\xi(t,x,\xi_T)-a_\xi(t,x,\xi_m)\bigr)\bigr|\lesssim
(i+j+|m-n|)\l^{-\frac 13+\frac 23\max(0,|\alpha|-1)}\,.
$$

Let $L=D_t-a_\xi(t,x,\xi_m)D$, where $D=D_x$ as always.  Writing $D_t
v_T=a^w(t,x,D)v_T$, and using the above expansion for the latter, we
see that $(L-a_{(4')})v_{(4)}$ can be written as a sum of 5 terms,
\begin{multline*}
  (L-a_{(4')})v_{(4)}=
  (L_Tv_T)\,v_S\,\overline{v_{T'}}\,\overline{v_{S'}}+
  v_T\,(L_Sv_S)\,\overline{v_{T'}}\,\overline{v_{S'}}-
  v_T\,v_S\,\overline{(L_{T'}v_{T'})}\,\overline{v_{S'}}\\-
  v_T\,v_S\,\,\overline{v_{T'}}\overline{(L_{S'}v_{S'})}+
  r_{(4)}v_{(4)}\,,
\end{multline*}
where we wrote $\xi_T+\xi_S-\xi_{T'}-\xi_{S'}=2(\xi_m-\xi_n)$, and
where
\begin{align*}
  L_Tv_T&=\bigl(a^w(t,x,D)-a(t,x,\xi_T)-a_\xi(t,x,\xi_m)(D-\xi_T)\Bigr)v_T\\
  &=\Bigl(\bigl[a_\xi(t,x,\xi_T)-a_\xi(t,x,\xi_m)\bigr](D-\xi_T)+
  r^w(t,x,D)\Bigr)v_T\,.
\end{align*}
In the expressions for $L_S$, $L_{T'}$, and $L_{S'}$, $T$ is
respectively replaced by $S$, $T'$, and $S'$, but the $\xi_m$ is the
same for each. The $L_T$'s thus depend on all 4 subscripts, but this
is fine since the below analysis is applied separately to each term.

Since $(D-\xi_T)$ applied to $v_T$ counts as $\l^{\frac 23}$, then
$|L_T v_T|\lesssim \l^{\frac 13}(i+j+|m-n|)\l^{\frac 13}\chi_T$.
Indeed, $L_T v_T$ can be written, at each fixed time $t$, as
$\l^{\frac 13}(i+j+|m-n|)$ times a Schwartz function of the same scale
and phase space center as $v_T$.

Consequently,
$$
\bigl|(L-a_{(4')})v_{(4)}\bigr|\lesssim \l^{\frac
  13}(i+j+|m-n|)\,\langle m-n\rangle \,\l^{\frac 43}\chi_{(4)}\,,
$$
where $\chi_{(4)}$ is the product of the corresponding $\chi_T$.

We also need the estimate
$$
\bigl|(L-a_{(4')})^2v_{(4)}\bigr|\lesssim \l^{\frac
  23}(i+j+|m-n|)^2\langle m-n\rangle^2 \l^{\frac 43}\chi_{(4)}\,.
$$
Following the above arguments, we can write $(L-a_{(4')})^2v_{(4)}$ as
a sum of 16 terms like
$$
(L_T^2v_T)\,v_S\,\overline{v_{T'}}\,\overline{v_{S'}}+
(L_Tv_T)\,(L_Sv_S)\,\overline{v_{T'}}\,\overline{v_{S'}}-
(L_Tv_T)\,v_S\,\overline{(L_{T'}v_{T'})}\,\overline{v_{S'}}- \cdots
$$
plus 4 commutator terms
$$
\bigl(\,\bigl[D_t-a^w(t,x,D)\,,L_T\bigr]v_T\bigr)\,v_S\,
\overline{v_{T'}}\,\overline{v_{S'}}+
v_T\,\bigl(\,\bigl[D_t-a^w(t,x,D)\,,L_S\bigr]v_S\bigr)\,
\overline{v_{T'}}\,\overline{v_{S'}}-\cdots
$$
plus remainder terms
$$
(Lr_{(4)})\,v_{(4)}+r_{(4)}(L-a_{(4')})v_{(4)}\,.
$$
Except for the commutator terms, the desired bounds follow by the same
estimates as for $(L-a_{(4')})v_{(4)}$.  The commutator terms depend
on the fact that the commutator of two symbols in $C^1
S_{\l,\l^{2/3}}$ is in $\l^{-1}S_{\l,\l^{2/3}}$.

We let $\phi$ be a smooth cutoff to a $\l^{-\frac 13}$ time interval
in $t$. We can then write
\begin{multline*}
  \int v_{(4)}\,\phi\,dt\,dx= \int
  \frac{\bigl(L-a_{(4')}\bigr)^2v_{(4)}}{a_{(4')}^2}\,\phi\,dt\,dx
  +\int \frac{\bigl(\phi (L'a_{(4')})-a_{(4')}(L'\phi)\bigr)\,v_{(4)}}{a_{(4')}^2}\,dt\,dx\\
  -\int\frac{\bigl(2\phi
    (L'a_{(4')})-a_{(4')}(L'\phi)\bigr)\bigl(L-a_{(4')}\bigr)\,v_{(4)}}{a_{(4')}^3}
  \,dt\,dx\,,
\end{multline*}
where we have integrated by parts in the last two terms, and $L'$ is
the transpose of $L$.  We assume here that the $v_T$ are extended to a
slightly larger time interval to allow integration by parts in $t$.

By the above estimates the integrand of the first term on the right is
bounded by
$$
\frac{(i+j+|m-n|)^2\langle m-n\rangle^2 \l^{\frac
    43}\chi_{(4)}}{(j^2-i^2)^2} \le \frac{\langle
  m-n\rangle^4}{\langle \,j-i\,\rangle^2}\,\l^{\frac 43}\chi_{(4)}\,.
$$
By \eqref{a4est1} and \eqref{a4est2}, we have $(L'a_{(4')})\lesssim
a_{(4')}$.  The integrands of the last two terms on the right hand
side are then respectively dominated by
$$
\frac{\l^{\frac 43}\chi_{(4)}}{(j^2-i^2)}+ \frac{(i+j+|m-n|)\langle
  m-n\rangle\l^{\frac 43}\chi_{(4)}}{(j^2-i^2)^2} \le \frac{\langle
  m-n\rangle^2}{\langle j-i\rangle^2}\,\l^{\frac 43}\chi_{(4)}\,.
$$ 
Consequently, we have shown that we can write $\int
v_{(4)}\phi\,dt\,dx=\int w_{(4)}\,dt\,dx$, where
$$
|w_{(4)}|\lesssim \frac{\langle m-n\rangle^4}{\langle
  j-i\rangle^2}\,\l^{\frac 43}\chi_{(4)}\,.
$$
Given a collection $\Lambda$ of pairs of tubes $(T,S)$, we let
$$
b_n=\Bigl(\sum_{\xi_T=\xi_n}|b_T|^2\Bigr)^{\frac 12}\,,\qquad
d_n=\Bigl(\sum_{\xi_S=\xi_n}|d_S|^2\Bigr)^{\frac 12}\,,
$$
where the sum is over all $T$, respectively $S$, in the collection
that have frequency center $\xi_n$. Then
\begin{align*}
  \Bigl\|\,\phi\!\!\!\!\sum_{(T,S)\in\Lambda} b_T v_T\cdot d_S
  v_S\Bigr\|_{L^2}^2&=
  \sum_{(T,S)\in\Lambda}\sum_{(T',S')\in\Lambda}b_T\,d_S\,\overline{b_{T'}}\,\overline{d_{S'}}
  \int v_{(4)}\phi\,dt\,dx\\
  &\le\; \sum_{m,j}\sum_{n,i}
  \sum_{(T,S)\in\Lambda_{m,j}}\sum_{(T',S')\in\Lambda_{n,i}}
  \bigl|\,b_T\,d_S\,\overline{b_{T'}}\,\overline{d_{S'}}\,\bigr| \int
  w_{(4)}\,dt\,dx\,,
\end{align*}
where $\Lambda_{m,j}\subseteq\Lambda$ consists of the pairs
$(T,S)\in\Lambda$ such that $\xi_T=\xi_{m+j}$, and $\xi_S=\xi_{m-j}$.

By the above this is dominated by
\begin{multline*}
  \l^{\frac 43}\sum_{m,n,i,j}\frac{\langle
    m-n\rangle^4}{\langle\,i-j\,\rangle^2}\int
  \Bigl(\sum_{(T,S)\in\Lambda_{m,j}}|\,b_T\,d_S|\,\chi_T\chi_S \Bigr)
  \Bigl(\sum_{(T',S')\in\Lambda_{n,i}}|\,b_{T'}\,d_{S'}|\,\chi_{T'}\chi_{S'}\Bigr)
  \,dt\,dx\\
  \lesssim \theta^{-1}\sum_{m,n,i,j}\frac{\langle
    m-n\rangle^2}{\langle i-j\rangle^2}
  b_{m+j}\,d_{m-j}\,b_{n+i}\,d_{n-i}\,,
\end{multline*}
where we used the Cauchy-Schwartz inequality and \eqref{tubeproduct}.

We next show that we may write $\int v_{(4)}\phi\,dt\,dx=\int
w_{(4)}\,dt\,dx$, where
\begin{equation}\label{mnbound}
  |w_{(4)}|\lesssim 
  \frac{1}{\langle m-n\rangle^{18}}\,\l^{\frac 43}\chi_{(4)}\,.
\end{equation}
Since
$$
\min\Bigl(\frac{\langle m-n\rangle^4}{\langle\,i-j\,\rangle^2}\,,
\,\frac 1{\langle m-n\rangle^{18}}\Bigr)\le \frac{1}{\langle\,
  i-j\,\rangle^{\frac 32}\langle m-n\rangle^{\frac 32}}\,,
$$
this will establish that
$$
\Bigl\|\,\phi\!\!\!\!\sum_{(T,S)\in\Lambda} \, b_T v_T\cdot d_S
v_S\Bigr\|_{L^2}^2\lesssim
\theta^{-1}\sum_{m,n,i,j}\frac{1}{\langle\,i-j\,\rangle^{\frac
    32}\langle m-n\rangle^{\frac 32}}
b_{m+j}\,d_{m-j}\,b_{n+i}\,d_{n-i}\,.
$$
By Schur's lemma, this is in turn bounded by
\begin{equation*}
  \theta^{-1}\Bigl(\sum_{m,j} b_{m+j}^2\,d_{m-j}^2\Bigr)^{\frac 12}
  \Bigl(\sum_{n,i}b_{n+i}^2\,d_{n-i}^2\Bigr)^{\frac 12}
  \le\theta^{-1}\Bigl(\sum_T |b_T|^2\Bigr)\Bigl(\sum_S |d_S|^2\Bigr)\,,
\end{equation*}
where the last sum is over all $T,S$ that occur in $\Lambda$.

To prove \eqref{mnbound}, we write
$$
2(m-n)\!\int v_{(4)}\phi\,dt\,dx= \int\l^{-\frac
  23}(\xi_T+\xi_S-\xi_{T'}-\xi_{S'})\,v_{(4)}\phi\,dt\,dx=\int
w_{(4)}\phi\,dt\,dx \,,
$$
where
$$
w_{(4)}=(\l^{-\frac
  23}(D-\xi_{T})\,v_T)\,v_S\,\overline{v_{T'}}\,\overline{v_{S'}}+
v_T\,(\l^{-\frac
  23}(D-\xi_{S})\,v_S)\,\overline{v_{T'}}\,\overline{v_{S'}}-\cdots
$$
We repeat this process, and use that $|\l^{-\frac{2k}3}(D-\xi_T)^k
v_T|\lesssim\l^{\frac 13}\chi_T$.  \qed


\newsection{Results for dimension $d\ge 3$}\label{sec:highdimension}
In this section we work on a compact $d$-dimensional manifold $M$
without boundary. We consider spectral clusters for $\g,\rho\in
\text{Lip(M)}$ exactly as in Theorem \ref{theorem1.1}.  We will apply
the general procedure of the previous sections to prove the following,
which establishes the conjectured result \eqref{lipconj} for a partial
range of $p$.  The restriction on $p$ is partly due to the below
Propositions \ref{p:bushcount'} and \ref{p:shorttime'} being weaker
than one would hope for. In particular, Proposition \ref{p:shorttime'}
uses only Strichartz estimates.  It is not clear what the analogue of
the bilinear estimates used for $d=2$ to handle large angle
interactions should be in this case. The bound of Proposition
\ref{p:bushcount} also is strictly larger when $2^ma^2\ll 1$ than the
bound suggested by heuristic arguments.

\begin{theorem}\label{t:hid}
  Let $u$ be a spectral cluster on $M$, where $M$ is of dimension
  $d\ge 3$. Then
  \begin{equation}\label{qboundf}
    \|u\|_{L^{p}(M)}\le C_p\, \lambda^{d(\frac 12-\frac 1p)-\frac 12}\|u\|_{L^{2}(M)}\,,\quad 
    \tfrac{6d-2}{d-1}<p\le\infty\,.
  \end{equation}
\end{theorem}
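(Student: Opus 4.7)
The plan is to extend the 2D blueprint of Sections~\ref{sec:argument}--\ref{sec:bilinear} to $(d-1)$ transverse dimensions, using only the Strichartz leg of the argument. First I would repeat the reduction of Step~1 of Section~\ref{sec:argument}: paradifferential truncation of $\g,\rho$ at frequency $\l$, Coifman--Meyer commutator bounds, elliptic regularity, and a conic partition on the $(d-1)$-sphere of frequency directions reduce the proof to an $L^p([0,1]\times\R^{d-1})$ bound for the half-wave evolution $(D_t-a_\l^w(t,x,D))u=0$, with $a_\l\in\l\,C^1S_{\l,\l}$ satisfying the transverse ellipticity conditions analogous to~\eqref{acond1}. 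Wave packets are then set up via a $(d-1)$-dimensional Gabor frame at scale $\l^{-2/3}$, indexed by tubes $T=(l,m,n)$ with $m,n\in\Z^{d-1}$. The packet propagation results of Section~\ref{sec:wavepacket} carry over verbatim, since their proofs rest on coordinate-invariant symbolic calculus in $C^1S_{\l,\l^{2/3}}$; the interval decomposition and $(a,k,m)$-bush structure of Steps~3--5 transfer unchanged, with the constraints now reading $2^m a^2\lesssim 1$ and $2^m\lesssim\l^{(d-1)/3}$ (the latter from tube packing in the transverse cross-section).

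Next, I would prove higher-dimensional analogs of Propositions~\ref{p:bushcount} and~\ref{p:shorttime}. For the short-time bound, apply the weighted Strichartz estimate at the natural Schr\"odinger exponent $p_d=\tfrac{2(d+1)}{d-1}$ in $d-1$ spatial dimensions---the dimensional analog of Theorem~\ref{weightedStrichartz}, obtained by rescaling and the Keel--Tao dispersive estimate---and interpolate with the pointwise bound $|u_{a,k}|\lesssim\l^{(d-1)/3}2^m a$ on $A_{a,k,m}$ to obtain an $L^p$ estimate on each $2^{-m}\l^{-1/3}$ time slice, valid for all $p\ge p_d$. For bush counting, I would generalize Lemma~\ref{l:packet}: construct the packet projectors $P_n$ at bush centers with $\|P_nu\|_{L^2}^2\approx 2^m a^2$, and establish an almost-orthogonality estimate of the form
\[
\|P_{n'}S(t_{n'},t_n)P_n\|_{L^2\to L^2}\lesssim (2^{-m}\alpha)^{(d-1)/2}\langle\log(2^{-m}\alpha)\rangle
\]
by running the weighted energy argument of Section~\ref{sec:wavepacket} with $(d-1)$-dimensional transverse weights $\langle\d^{-2}(x-x_j(t))\rangle$ and momentum weights $m_j(\xi)$. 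The $TT^*$ duality of~\eqref{ug} then converts this into a bush count $M\lesssim (2^m a^2)^{-1}\langle\log(2^m a^2)\rangle^{O_d(1)}$ analogous to~\eqref{Mbound}.

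The main obstacle is that neither dimensional analog is as sharp as its 2D counterpart. Energy localization for Lipschitz coefficients degrades with dimension---the weighted estimates of Lemma~\ref{spacefreqloc} control only low-order transverse moments when the coefficient is merely Lipschitz---and in the absence of a bilinear $L^2$ estimate for widely separated tube angles (whose correct higher-dimensional formulation is not evident from the 2D proof of Lemma~\ref{bilinearlemma}), the short-time estimate is restricted to what plain Strichartz yields. Once both propositions are in hand, summation over dyadic $m\le\tfrac{d-1}{3}\log_2\l$, $a\le 1$, and $k\ge 0$ as at the end of Section~\ref{sec:argument} combines them; a direct computation shows that the pointwise leg of the Strichartz/$L^\infty$ interpolation dominates the packet count precisely when $p>\tfrac{6d-2}{d-1}$, which is the threshold appearing in Theorem~\ref{t:hid} and yields the conjectured scaling $\l^{d(1/2-1/p)-1/2}$ without loss in $\l$.
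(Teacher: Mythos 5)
Your overall architecture (Strichartz at $p_d$ interpolated with the pointwise bound on $A_{a,k,m}$ for the short-time estimate, packet projections plus a $TT^*$ argument for the slice count) is indeed the paper's route for $d\ge 3$, but two of your quantitative claims are not provable by these methods, and they are exactly where the threshold $\frac{6d-2}{d-1}$ is decided. First, the almost-orthogonality bound you assert, $\|P_{n'}S(t_{n'},t_n)P_n\|\lesssim (2^{-m}\alpha)^{(d-1)/2}\langle\log(2^{-m}\alpha)\rangle$, is stronger than what the weighted energy method yields. The weight $Q_j$ in \eqref{weightedest} is linear in $x$, i.e.\ it localizes energy along the Hamiltonian flow only to first order, and this is forced by the Lipschitz regularity of $a_\l$ (higher-order weights produce commutators with more than one derivative of the coefficients). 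Consequently $m_0(\xi)^{-1}$ decays only like the first power of the frequency separation, and the worst-case sum over the $2^m$ frequency centers of a bush --- now a $(d-1)$-dimensional lattice of side $2^{m/(d-1)}$ --- is $\sum_{|j|\le 2^{m/(d-1)}}(1+\alpha^{-1}|j|)^{-1}\approx 2^{m(d-2)/(d-1)}\alpha$, so the provable bound (Lemma \ref{l:packet'}) is only $2^{-m/(d-1)}\alpha$, with no gain of the power $(d-1)/2$. You yourself note that Lipschitz coefficients give only low-order transverse localization, but then still assert the stronger bound; the two statements are incompatible, and with the correct bound your bush count changes.

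Second, the time slicing is off: in dimension $d$ a $2^m$-bush has transverse angular spread $\gtrsim 2^{m/(d-1)}\l^{-\frac13}$, hence stays coherent only for time $2^{-m/(d-1)}\l^{-\frac13}$, and the intervals of $\I_m$ must have that length, not $2^{-m}\l^{-\frac13}$. With your finer slices a single bush meets $\sim 2^{m(d-2)/(d-1)}$ of them, so the per-block count $M\lesssim (2^ma^2)^{-1}\langle\log(2^ma^2)\rangle$ cannot hold as stated, and the near-diagonal sum in the $TT^*$ step (which uses the separation of the $t_n$ by one slice length) degrades accordingly. With the correct inputs --- at most $\l^{\frac13}2^{-\frac m{d-1}}(2^ma^2)^{-2}\langle\log(2^ma^2)\rangle$ slices meeting $A_{a,k,m}$ (Proposition \ref{p:bushcount'}) and the Strichartz/$L^\infty$ interpolation bound of Proposition \ref{p:shorttime'} --- the combined estimate carries the factor $(2^ma^2)^{\frac{p-p_d}2-2}$, and summability over dyadic $a$ and $m$ is precisely the condition $p>p_d+4=\frac{6d-2}{d-1}$. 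Your final ``direct computation'' is not carried out, and had your stronger orthogonality bound been true it would yield a better threshold, which signals that your claimed inputs are inconsistent with the stated range. A further (smaller) gap: the Step-1 reduction is not verbatim for $d\ge3$; for large $p$ Sobolev embedding no longer disposes of the frequency tails $(\phi u)_{<\l}$, $(\phi u)_{>\l}$, and one needs the $L^\infty$ spectral cluster bound \eqref{linfest} from \cite{Sm} together with interpolation before reducing to the frequency-localized half-wave evolution.
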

The following partial range result for the other estimate in
\eqref{lipconj},
\begin{equation*}
  \|u\|_{L^p(M)}\le 
  C\,\l^{\frac{2(d-1)}{3}(\frac 12-\frac 1p)}\,\|f\|_{L^2(M)}\,,
  \qquad 2\le p\le \tfrac{2(d+1)}{d-1}\,,
\end{equation*}
was established in \cite{Sm}, as was the $p=\infty$ case of
\eqref{qboundf}.

The proof of Theorem \ref{t:hid} follows the same general steps as
Theorem \ref{theorem1.1}, and so we focus below on the modifications
necessary in each step.

{\bf Step 1:} {\em Reduction to a frequency localized first order
  problem.}  Care must be taken in the frequency localization step to
handle the high-frequency terms, since Sobolev embedding as used in
the $d=2$ case is not sufficient to establish the desired result for
large $p$ in high dimensions. In particular, the analogue of Theorem
\ref{l8theorem} does not hold for $p=\tfrac{6d-2}{d-1}$. Instead, we
use the following estimate from \cite{Sm}, valid for Lipschitz
$\g,\rho$,
\begin{equation}\label{linfest}
  \|u\|_{L^\infty(M)}\lesssim \l^{\frac {d-1}2 }\|u\|_{L^2(M)}\,.
\end{equation}
We remark that this estimate used the strict spectral localization of
$u$ and intrinsic Sobolev embedding on $M$ to deduce it from results
for smaller $p$.

By \eqref{linfest}, if $\phi$ is a bump function supported in a local
coordinate patch, and
$$
\phi u=(\phi u)_{<\l}+(\phi u)_{\l}+(\phi u)_{>\l}
$$
is the decomposition of $\phi u$ into terms with local-coordinate
frequencies respectively less than $c\l$, comparable to $\l$, and
greater than $c^{-1}\l$, then each term in the decomposition has
$L^\infty$ norm bounded by $\l^{\frac {d-1}2}\|u\|_{L^2(M)}\,.$ The
proof of \cite[Corollary 5]{Sm}, together with \eqref{ellipticest} and
Sobolev embedding on $\R^d$, yields
$$
\|(\phi u)_{<\l}\|_{L^{\frac {2d}{d-2}}}+\|(\phi u)_{>\l}\|_{L^{\frac
    {2d}{d-2}}}\lesssim \|u\|_{L^2(M)}\,.
$$
Interpolation with \eqref{linfest} then yields even better bounds than
those of Theorem \ref{t:hid} for these terms.

Hence we are reduced to bounding $\|(\phi u)_\l\|_{L^p}$. With
$a_\l^w(t,x,D)$ and $S(t,t_0)$ defined as they are for $d=2$, where
$x$ and $\xi$ are now of dimension $d-1$, we then reduce matters as
before to establishing
\begin{equation*}
  \|u\|_{L^p([0,1]\times\R^{d-1})} \le 
  C_p \,\lambda^{d(\frac 12-\frac 1p)-\frac 12}\|u_0\|_{L^2(\R^{d-1})}\,,\quad
  u=S(t,0)u_0\,,\quad \tfrac{6d-2}{d-1}<p\le\infty\,,
\end{equation*}
with $\widehat u_0$ supported in $|\xi|\le \frac 34\l$.  As before we
will take $\|u_0\|_{L^2(\R^{d-1})}=1$.

The expansion of $u$ in terms of tube solutions $v_T$ on each
$\l^{-\frac 13}$ time slab and the definition of $A_{a,k,m}$ bushes
then proceeds for $d\ge 3$ the same as for $d=2$, but where we take
$\epsilon=\l^{-\frac{d-1}3}$ as the lower bound for $a$ in the sum
$u=\sum u_{a,k}$ in order to trivially obtain the desired bounds for
$u_\epsilon$.

In $d$-dimensions, a $2^m$-bush has angular spread at least $2^{\frac
  m{d-1}}\l^{-\frac 13}$, and so can retain full overlap for time
$\delta t=2^{-\frac m{d-1}}\l^{-\frac 13}$.  Thus, in dimension $d$ we
decompose the unit time interval into a collection $\I_m$ of intervals
of size $\delta t = 2^{-\lfloor \frac m{d-1}\rfloor}\l^{-\frac 13}$;
such intervals are then dyadic subintervals of the decomposition into
$\l^{-\frac 13}$ time slabs.

The proof of Theorem \ref{t:hid} will then be concluded using the
following two propositions.

\begin{proposition}
  There are at most $\lambda^{\frac13} 2^{-\frac{m}{d-1}} (2^m
  a^{2})^{-2} \langle\log (2^m a^2)\rangle$ intervals $I\in\I_m$ which
  intersect $ A_{a,k,m}$.
  \label{p:bushcount'}\end{proposition}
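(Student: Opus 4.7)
The plan is to adapt the $d=2$ argument of Proposition~\ref{p:bushcount} to the higher-dimensional geometry. I would partition $[0,1]$ into blocks $I_\epsilon$ of consecutive intervals from $\I_m$, with length $|I_\epsilon| = \epsilon\,\lambda^{-\frac 13}\,2^{\frac{m}{d-1}}\,(2^m a^2)$, chosen so that the long-time off-diagonal terms in the coming almost-orthogonality estimate can be absorbed. It then suffices to show that the number $M$ of intervals in any $I_\epsilon$ meeting $A_{a,k,m}$ satisfies $M\lesssim (2^m a^2)^{-1}\langle\log(2^m a^2)\rangle$; multiplying by the number $|I_\epsilon|^{-1}$ of blocks recovers the stated bound. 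For each such interval I would extract a bush at $(t_n,x_n)$ of $\approx 2^m$ packets $v_{n,l}$ with distinct frequency centers $\xi_{n,l}$ on the $2\lambda^{\frac 23}$-lattice in $\R^{d-1}$, and define rank-one projectors $P_n f = 2^{-m}\langle w_n, f\rangle w_n$ with $\|P_n u\|_{L^2(\R^{d-1})}^2\approx 2^m a^2$, exactly as in Section~\ref{sec:bushcount}.

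The main obstacle is proving the $d$-dimensional analogue of Lemma~\ref{l:packet}:
\[
\|P_{n'}S(t_{n'},t_n)P_n\|_{L^2(\R^{d-1})\to L^2(\R^{d-1})}\lesssim
\min\bigl(2^{-\frac{m}{d-1}}\alpha,\,1\bigr)\,,
\]
where $\alpha=\max(\lambda^{-\frac13}|t_{n'}-t_n|^{-1},\lambda^{\frac13}|t_{n'}-t_n|)$. The weighted energy estimate \eqref{weightedest}, the factorization leading to \eqref{wbound}, and the reduction to the lattice sum $\sum_l m_0(\xi_l)^{-1}$ all extend to $\R^{d-1}$ with $q_j$ and $m_0$ now vector-valued. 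The new ingredient is evaluating this sum over a $(d-1)$-dimensional lattice: the map $\xi_l\mapsto (t_1-t_0)\partial_\xi a_\lambda(t_0,x_0,\xi_l)$ is, by the $d$-dimensional analogue of \eqref{acond1}, a near-isometry up to scale $\lambda^{-1}|t_1-t_0|$, so the displaced lattice has spacing $\alpha^{-1}$ and $2^m$ points. Riemann-sum comparison then yields
\[
\sum_l m_0(\xi_l)^{-1}\lesssim \alpha^{d-1}\int_0^{2^{m/(d-1)}/\alpha}r^{d-2}(1+r)^{-1}\,dr
\lesssim \alpha\cdot 2^{\frac{m(d-2)}{d-1}}\,,
\]
from which the operator-norm bound follows after dividing by $\|w_n\|_{L^2}^2\approx 2^m$. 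Crucially, the integral in $d-1\ge 2$ variables no longer produces the logarithmic factor present in $d=2$, which accounts for the log exponent in the proposition dropping from three to one.

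With the almost-orthogonality in hand, the duality argument producing \eqref{Msumbound} gives
\[
2^m a^2 M^2\lesssim M+\sum_{n\ne n'}\min\bigl(2^{-\frac{m}{d-1}}\alpha,\,1\bigr)\,.
\]
For pairs with $\lambda^{\frac13}|t_n-t_{n'}|\le 1$, the $|I|$-separation of the $t_n$ and convergence of $\sum k^{-1}$ up to the cutoff $2^{m/(d-1)}$ produce a contribution $\lesssim M\cdot m/(d-1)$, leading to $M\lesssim (2^m a^2)^{-1}\langle\log(2^m a^2)\rangle$. For pairs with $\lambda^{\frac13}|t_n-t_{n'}|\ge 1$, the contribution is $\lesssim 2^{-\frac{m}{d-1}}M^2\lambda^{\frac13}|I_\epsilon|\lesssim \epsilon\,2^m a^2\,M^2$ by the choice of $|I_\epsilon|$, and is absorbed on the left-hand side for $\epsilon$ small, completing the proof.
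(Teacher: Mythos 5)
Your overall scheme is the paper's: the same block decomposition with $|I_\epsilon|=\epsilon\,\lambda^{-\frac13}2^{\frac m{d-1}}(2^ma^2)$, the same bush projectors with $\|P_nu\|_{L^2}^2\approx 2^ma^2$, the same vector-valued weights $q_j$, and the same worst-case lattice computation $\sum_l m_0(\xi_l)^{-1}\lesssim \alpha\,2^{m\frac{d-2}{d-1}}$ yielding the almost-orthogonality bound $\|P_{n'}S(t_{n'},t_n)P_n\|\lesssim 2^{-\frac m{d-1}}\alpha$ of Lemma~\ref{l:packet'}; your observation that the $(d-1)$-dimensional sum no longer produces the $d=2$ logarithm is exactly right, as is the absorption of the pairs with $\lambda^{\frac13}|t_n-t_{n'}|\ge 1$ via the choice of $|I_\epsilon|$.

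There is, however, a genuine gap in the final counting step. For the pairs with $\lambda^{\frac13}|t_n-t_{n'}|\le 1$ you bound the harmonic sum by its cutoff $2^{\frac m{d-1}}$, getting a contribution $\lesssim M\cdot\frac m{d-1}$; from \eqref{Msumbound'} this only gives $2^ma^2M\lesssim \langle m\rangle$, i.e.\ $M\lesssim (2^ma^2)^{-1}\langle m\rangle$, and the asserted conclusion $M\lesssim (2^ma^2)^{-1}\langle\log(2^ma^2)\rangle$ does not follow: $\langle m\rangle\lesssim\langle\log(2^ma^2)\rangle$ is false in general (for instance when $a^2\approx 2^{-m}$ one has $\langle\log(2^ma^2)\rangle\approx 1$ while $m$ can be as large as $\tfrac{d-1}{3}\log_2\lambda$). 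Carried through, your bound would replace $\langle\log(2^ma^2)\rangle$ by $\langle m\rangle$ in the proposition and cost an extra factor of $\log\lambda$ in Theorem~\ref{t:hid}. The repair is the paper's: bound the number of short-time neighbors of each $t_n$ by $M$ rather than by the cutoff, so that the $2^{-\frac m{d-1}}\lambda^{-\frac13}$ separation gives $\sum_{n\ne n'}|t_n-t_{n'}|^{-1}\lesssim 2^{\frac m{d-1}}\lambda^{\frac13}M\log M$, hence $2^ma^2M\lesssim \log M$; since $M/\log M$ is increasing, this self-improves to $M\lesssim (2^ma^2)^{-1}\langle\log(2^ma^2)\rangle$, which is the bound you need per block. (A minor omission, handled in the paper in one line: one should first dispose of the trivial regime $a^2\lesssim 2^{-m(1+\frac1{d-1})}$, where the total number of intervals in $\I_m$ already satisfies the claimed bound.)
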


\begin{proposition}
  For each interval $I\in\I_m$, we have
  \begin{equation*}
    \| u_{a,k}\|_{L^p( A_{a,k,m} \cap I\times\R^{d-1})}^p \lesssim 
    \lambda \bigl(\lambda^{\frac{d-1}3}2^m a\bigr)^{p-p_d}2^{-\frac{kp_d}2}\,,
    \qquad p\ge p_d=\tfrac{2(d+1)}{d-1}\,.
  \end{equation*}
  \label{p:shorttime'}\end{proposition}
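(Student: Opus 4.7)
The plan is to reduce the claim at general $p\ge p_d$ to the endpoint $p=p_d$ via interpolation with the pointwise $L^\infty$ bound on $A_{a,k,m}$. Since $|v_T|\lesssim\lambda^{(d-1)/3}\chi_T$ and $\chi_{a,k}\le 2^m$ on $A_{a,k,m}$, we have $|u_{a,k}|\lesssim \lambda^{(d-1)/3}\,2^m a$ there, so
\[
\|u_{a,k}\|_{L^p(A_{a,k,m}\cap I\times\R^{d-1})}^p
\le \|u_{a,k}\|_{L^\infty(A_{a,k,m})}^{p-p_d}\,
\|u_{a,k}\|_{L^{p_d}(I\times\R^{d-1})}^{p_d},
\]
reducing matters to proving $\|u_{a,k}\|_{L^{p_d}(I\times\R^{d-1})}^{p_d}\lesssim \lambda\cdot 2^{-kp_d/2}$.

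The key input is the higher-dimensional analogue of Theorem \ref{weightedStrichartz} at $p=q=p_d$: for $(D_t-a_{\lambda^{2/3}}^w)v=F$ with $\hat v_0$ supported in $|\xi|\le\lambda$ and $|J|\le\lambda^{-1/3}$,
\[
\|v\|_{L^{p_d}(J\times\R^{d-1})}\lesssim \lambda^{1/p_d}\bigl(\|v_0\|_{L^2}+\|F\|_{L^1_tL^2_x(J\times\R^{d-1})}\bigr).
\]
The exponent $p_d=\frac{2(d+1)}{d-1}$ is the full-curvature Stein--Tomas/Strichartz exponent for the characteristic hypersurface $\tau=a(t,x,\xi)$, whose $d-1$ principal curvatures have size $\lambda^{-1}$ by \eqref{acond1}. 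Rescaling $(t,x,\xi)\to(\lambda t,\lambda x,\lambda^{-1}\xi)$ normalizes these to unit size and sends $J$ to a time interval of length $\lambda^{2/3}$, where standard Strichartz applies; the Weyl-calculus arguments of Section \ref{sec:wavepacket} carry over verbatim to deliver the weighted form above.

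Set $\tau:=\lfloor m/(d-1)\rfloor$, so $|I|=2^{-\tau}\lambda^{-1/3}$, and partition $I$ into sub-intervals $J$ of length $2^{-\max(k,\tau)}\lambda^{-1/3}$. Only tubes $(T,j)\in\T_{a,k}$ with $I_{T,j}\supseteq J$ contribute to $u_{a,k}|_J$; call $N_J$ this count. Writing $u_{a,k}|_J=\sum c_T(t)v_T(t)$ and invoking near-orthogonality of packets (the analogue of \eqref{Gaborsum}) together with $|c_T|\approx a$ and the bound $\|c'_T\|_{L^2(I_{T,j})}\lesssim a\,|I_{T,j}|^{-1/2}$ from \eqref{iselecta}, one obtains $\|u_{a,k}(t_J)\|_{L^2}^2\lesssim N_Ja^2$ and $\|\sum c'_Tv_T\|_{L^1_tL^2_x(J)}\lesssim \sqrt{N_J}\,a$, so Strichartz yields
\[
\|u_{a,k}\|_{L^{p_d}(J\times\R^{d-1})}^{p_d}\lesssim \lambda\,(N_J\,a^2)^{p_d/2}.
\]
The per-slab count deduced from \eqref{cTbounds2}--\eqref{cTbounds3} gives $\sum_JN_J\le N_{\text{slab}}\lesssim 2^{-k}/a^2$, and the elementary inequality $\sum_J N_J^{p_d/2}\le\bigl(\sum_JN_J\bigr)^{p_d/2}$, valid since $p_d/2\ge 1$, gives
\[
\|u_{a,k}\|_{L^{p_d}(I\times\R^{d-1})}^{p_d}\lesssim \lambda\,a^{p_d}\bigl(2^{-k}/a^2\bigr)^{p_d/2}=\lambda\cdot 2^{-kp_d/2},
\]
as required.

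The main obstacle is the higher-dimensional $L^{p_d}$ Strichartz bound itself with the sharp $\lambda^{1/p_d}$ dependence over time intervals of length $\lambda^{-1/3}$. Once that full-curvature restriction input is secured, the combinatorial argument is essentially the $d=2$ proof of Proposition \ref{p:shorttime} stripped of its bilinear refinement; the unavailability of a useful bilinear analogue in dimensions $d\ge 3$ is exactly what confines Theorem \ref{t:hid} to the range $p>\frac{6d-2}{d-1}$ rather than all $p>p_d$.
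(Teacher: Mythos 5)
Your proposal is correct and follows essentially the same route as the paper: an $L^{p_d}$ Strichartz bound $\lesssim\lambda^{1/p_d}2^{-k/2}$ for $u_{a,k}$ on $I$ (obtained, when $k>\tfrac m{d-1}$, by applying the estimate on the $2^{-k}\lambda^{-1/3}$ dyadic subintervals and summing via $\ell^2\subset\ell^{p_d}$, exactly your $\sum_J N_J^{p_d/2}\le(\sum_J N_J)^{p_d/2}$ step), interpolated with the pointwise bound $|u_{a,k}|\lesssim\lambda^{\frac{d-1}3}2^m a$ on $A_{a,k,m}$. The Strichartz input you flag as the main obstacle is the same one the paper takes for granted (the $d\ge3$ analogue of Theorem \ref{weightedStrichartz}, via \cite{KT},\cite{BSS}), so your write-up just makes explicit the bookkeeping the paper compresses into one line.
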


Indeed, combining the two propositions we obtain
\[
\begin{split}
  \| u_{a,k}\|_{L^p( A_{a,k,m})}^p \lesssim & \ \lambda
  \bigl(\lambda^{\frac{d-1}3} 2^m a)^{p-p_d} \lambda^{\frac13}
  2^{-\frac{m}{d-1}} (2^m a^{2})^{-2} \langle\log (2^m a^2)\rangle
  2^{-\frac{kp_d}2} \\ \lesssim & \ \lambda^{\frac43 +
    \frac{d-1}{3}(p-p_d)} 2^{m(\frac{p-p_d}2-\frac 1{d-1})} (2^m
  a^2)^{\frac{p-p_d}2-2}\langle\log (2^m a^2)\rangle
  2^{-\frac{kp_d}2}\,.
\end{split}
\]
Recall that $2^m$ and $a$ both take on dyadic values such that
$$
2^m a^2\lesssim 1\,,\qquad 2^m\lesssim \l^{\frac {d-1}3}\,.
$$
When the exponent of $2^m a^2$ is positive,
\[
p > p_d + 4 = \tfrac{6d-2}{d-1}\,,
\]
we may sum over $m$, $a$ and $k$ to obtain
\[
\| u\|_{L^p}^p \le C_p^p\,\lambda^{1+ \frac{d-1}{2}(p-p_d)}\,,
\]
giving the desired result
\[
\| u\|_{L^p} \le C_p\,\lambda^{\frac{d-1}{2}- \frac{d}p}\,.
\]
By \eqref{linfest} the constant $C_p$ remains bounded as $p\rightarrow
\infty$, but may diverge as $p\rightarrow \tfrac{6d-2}{d-1}$. On the
other hand $C_p$ is bounded by a power of $\log\l$ for
$p=\tfrac{6d-2}{d-1}$, since there are only $\approx\log\l$ terms in
each index.


\bigskip

\noindent{\bf Proof of Proposition \ref{p:bushcount'}.}
There are $\approx 2^\frac m{d-1} \l^\frac13$ intervals in $\I_m$, so
we may assume that $a^2 \gg 2^{-m(1+\frac 1{d-1})}$.  It suffices to
prove, for $\epsilon$ a fixed small number, that if among $\epsilon
\,2^{m(1+\frac 2{d-1})} a^2$ consecutive slices in $\I_m$ there are
$M$ slices that intersect $A_{a,k,m}$, then
\begin{equation}\label{Mbound'}
  M\lesssim (2^m a^2)^{-1} \langle \log (2^m a^2)\rangle\,.
\end{equation}

Consider a collection $\{B_n\}_{n=1}^M$ of $M$ distinct
$(a,k,m)$-bushes, centered at $(t_n,x_n)$, such that
\[
\epsilon\,\l^{-\frac 13}2^{m(1+\frac 1{d-1})}a^2\ge |t_n-t_{n'}|\ge
\l^{-\frac 13}2^{-\frac m{d-1}} \quad\text{when}\quad n\ne n'\,.
\]
Denote by $\{v_{n,l}\}_{l=1,2^m}$ the collection of $2^m$ packets in
$B_n$.  As in the proof of Proposition \ref{p:bushcount}, for each $n$
we define the bounded projection operators $P_n$ on $L^2(\R^{d-1})$ at
time $t_n$ by
\[
P_n f = 2^{-m} a^{2} \left(\sum_{l}\overline{c_{n,l}}(t_n)^{-1}
  v_{n,l}(t_n,\cdot\,)\right) \left( \sum_l c_{n,l}(t_n)^{-1}\langle
  v_{n,l}(t_n, \cdot\,), f\rangle\right)
\]
where we recall that $|c_{n,l}(t_n)| \approx a$, so that
\begin{equation}
  \| P_n u(t_n,\cdot\,)\|_{L^2(\R^{d-1})}^2 \approx 2^m a^2\,.
  \label{ag'}\end{equation}

As with Proposition \ref{p:bushcount}, the key estimate is the
following analogue of Lemma \ref{l:packet}.  We remark that the
heuristics of tracking bicharacteristics to count tube-solution
overlaps would suggest that \eqref{ao'} below should hold with bound
$2^{-m}\alpha^{d-1}$ on the right hand side, which would improve the
bound in Proposition \ref{p:bushcount'} to $\l^{\frac 13} 2^{-\frac
  m{d-1}} (2^m a^2)^{-(1+\frac 1{d-1})}\,.$ The fact that the weight
$Q_0$ in \eqref{weightedest} only gives an order one localization of
the energy, however, restricts us to the bound below.

\begin{lemma}
  Let $\alpha=\max\bigl(\l^{-\frac 13}|t_{n'}-t_n|^{-1},\l^{\frac
    13}|t_{n'}-t_n|\bigr)$.  Then the operators $P_n$ satisfy
  \begin{equation}
    \| P_{n'} S(t_{n'},t_n) P_n \|_{L^2(\R^{d-1}) \to L^2(\R^{d-1})} \lesssim 
    2^{-\frac m{d-1}} \alpha \,.
    \label{ao'}
  \end{equation}
  \label{l:packet'}
\end{lemma}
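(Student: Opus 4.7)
The plan is to adapt the proof of Lemma \ref{l:packet} to the $(d-1)$-dimensional setting, with essentially one new ingredient: a higher-dimensional combinatorial summation that replaces \eqref{wbound}. Define $\delta$ as in the two-dimensional case and introduce the vector-valued symbol
\begin{equation*}
  q_j(t,x,\xi)=\delta^{-2}\bigl(x-x_j+(t-t_j)\,\partial_\xi a_\l(t_j,x_j,\xi)\bigr),\qquad j=0,1,
\end{equation*}
now with $x,\xi\in\R^{d-1}$, and set $Q_j(t)=q_j(t,x,D)$. The first step is to prove the vector-valued weighted energy estimate
\begin{equation*}
  \|Q_0(t_1)S(t_1,t_0)f\|_{L^2(\R^{d-1})}\lesssim \|f\|_{L^2(\R^{d-1})}+\|Q_0(t_0)f\|_{L^2(\R^{d-1})}+\delta^{-1}\|(x-x_0)f\|_{L^2(\R^{d-1})},
\end{equation*}
which follows component by component, by the same computation of $\partial_t Q_0^i+i[Q_0^i,a_\l^w]$ as in the $d=2$ case: the commutator term is bounded on $L^2$ using $a_\l\in\l C^1S_{\l,\l}$ and $\partial_\xi^2 a_\l\in \l^{-1}C^1S_{\l,\l}$, while the residual $\delta^{-2}(x-x_0)$ contribution is controlled via Corollary~\ref{c:weight}.

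With constant-coefficient symbols $m_0(\xi)=\langle q_0(t_1,x_1,\xi)\rangle$ and $m_1(\xi)=\langle q_1(t_0,x_0,\xi)\rangle$ on $\R^{d-1}$, interpolating the weighted estimate with the trivial $L^2\to L^2$ bound on $S(t_1,t_0)$ yields, for $N$ large enough,
\begin{equation*}
  \bigl\|m_0(D)^{1/2}\langle\delta^{-2}(x-x_1)\rangle^{-N}S(t_1,t_0)\langle\delta^{-2}(x-x_0)\rangle^{-N}m_1(D)^{1/2}f\bigr\|_{L^2}\lesssim \|f\|_{L^2}.
\end{equation*}
Factor $P_{n'}S(t_{n'},t_n)P_n$ through this inequality exactly as in the derivation of \eqref{weightedest3}. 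Since $P_n$ is the rank-one operator $f\mapsto 2^{-m}\langle w_n,f\rangle w_n$ with $\|w_n\|_{L^2}\approx 2^{m/2}$, the desired operator bound reduces to the symmetric pair of estimates
\begin{equation*}
  \bigl\|m_j(D)^{-1/2}\langle\delta^{-2}(x-x_j)\rangle^{N}w_j\bigr\|_{L^2(\R^{d-1})}^{2}\lesssim 2^{m(d-2)/(d-1)}\alpha,\qquad j=0,1.
\end{equation*}

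The hard part is the last summation. Since $w_1$ is a sum of $2^m$ $L^2$-normalized packets whose Fourier profiles are concentrated near distinct lattice points $\xi_l\in\l^{2/3}\Z^{d-1}$, Plancherel and the quasi-orthogonality of the frequency profiles reduce the bound to
\begin{equation*}
  \sum_l m_0(\xi_l)^{-1}\lesssim 2^{m(d-2)/(d-1)}\alpha.
\end{equation*}
By \eqref{acond1}, the Hessian $\partial_\xi^2 a_\l(t_0,x_0,\cdot)$ is invertible of size $\l^{-1}$, so the affine map $\xi\mapsto \delta^{-2}(x_1-x_0)+\delta^{-2}(t_1-t_0)\partial_\xi a_\l(t_0,x_0,\xi)$ carries the lattice $\l^{2/3}\Z^{d-1}$ to a quasi-lattice with spacing of order $\alpha^{-1}$ in each direction; consequently $m_0(\xi_l)\approx\langle c+\alpha^{-1}n_l\rangle$ for distinct vectors $n_l\in\Z^{d-1}$. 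The worst case is a dense packing of the $n_l$ in a ball of radius $\approx 2^{m/(d-1)}$ centered at $-\alpha c$, and a direct calculation in polar coordinates yields
\begin{equation*}
  \sum_{|n|\lesssim 2^{m/(d-1)}}\langle\alpha^{-1}n\rangle^{-1}\lesssim \alpha^{d-1}+\alpha\!\int_{\alpha}^{2^{m/(d-1)}}\!\! r^{d-3}\,dr\lesssim \alpha\cdot 2^{m(d-2)/(d-1)}.
\end{equation*}
The hypothesis $d\ge 3$ enters precisely here: the radial integral is now a genuine power rather than a logarithm, which is why no $\langle\log(2^{-m}\alpha)\rangle$ factor appears in the statement. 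Combining with the factorization gives $\|P_{n'}S(t_{n'},t_n)P_n\|\lesssim 2^{-m}\cdot 2^{m(d-2)/(d-1)}\alpha=2^{-m/(d-1)}\alpha$, as claimed.
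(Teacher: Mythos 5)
Your argument is essentially the paper's own proof: the same vector-valued weights $q_j$ and weighted energy estimate, the same factorization through $m_0(D)^{1/2}$ and $m_1(D)^{1/2}$, and the same reduction to the worst-case lattice sum $\sum_{|n|\lesssim 2^{m/(d-1)}}\langle\alpha^{-1}n\rangle^{-1}\lesssim \alpha\, 2^{m(d-2)/(d-1)}$, which is exactly how the paper establishes \eqref{ao'}. The only (cosmetic) caveat is that in your final display the near-origin block should be counted as $\min\bigl(\alpha,2^{m/(d-1)}\bigr)^{d-1}$ rather than $\alpha^{d-1}$, since the last inequality as written would fail when $\alpha\gg 2^{m/(d-1)}$ (a regime excluded in the application but not in the statement); with that adjustment the bound holds for all $\alpha\ge 1$.
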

\begin{proof}
  We follow the proof of Lemma \ref{l:packet} at the end of Section
  \ref{sec:wavepacket}.  The same steps follow, where the $q_j$ are
  vector valued if $d\ge 3$.  The analogue of \eqref{wbound} to be
  proven is
$$
\bigl\|m_0(D)^{-\frac
  12}\langle\delta^{-2}(x-x_1)\rangle^{2}w_1\|_{L^2(\R)}^2\lesssim
2^{m(1-\frac 1{d-1})}\alpha\,,
$$
which is established by comparison to the worst case sum, where $j\in
\Z^{d-1}$,
$$
\sum_{|j|\le 2^{\frac m{d-1}}}(1+\alpha^{-1} |j|)^{-1}\lesssim
\alpha\int_0^{2^{\frac m{d-1}}}r^{d-3}\,dr \lesssim 2^{m(1-\frac
  1{d-1})}\alpha\,,
$$
where we used that $\alpha\ge 1$ to handle the $j=0$ term.
\end{proof}

As before, Lemma \ref{l:packet'} leads to the bound
\begin{equation}
  \sum_n \| P_n u\|_{L^2(\R^{d-1})} \lesssim   
  C^{\frac12}
  \| u_0\|_{L^2(\R)}\,, \qquad C =  M+ \sum_{n,n'}
  { 2^{-\frac m{d-1}}\alpha }\,.
  \label{ug'}\end{equation}

Comparing \eqref{ag'} and \eqref{ug'} applied to $u$, it follows that
\begin{equation}\label{Msumbound'}
  2^m a^2 M^2 \lesssim  M+ \sum_{n\ne n'}{ 2^{-\frac m{d-1}}\alpha }
  \,.
\end{equation}

The bound \eqref{Mbound'} is trivial if $2^m a^2 M^2 \lesssim M$, so
we consider the summation term.  For the sum over terms where
$\l^{\frac 13}|t_{n'}-t_n|\ge 1$ we have
$$
\sum_{n\ne n'}2^{-\frac m{d-1}}\l^{\frac 13}|t_{n'}-t_n|\, \lesssim
\epsilon\, 2^m a^2 M^2\,.
$$
Taking $\epsilon$ small we can thus absorb these terms into the left
hand side of \eqref{Msumbound'}.

To conclude, we may assume then that
\[
2^m a^2 M^2 \lesssim 2^{-\frac{m}{d-1}}\l^{-\frac 13}\sum_{n\ne
  n'}|t_{n'}-t_n|^{-1}\,.
\]
We use the $2^{-\frac{m}{d-1}}\l^{-\frac 13}$ separation of the
$t_n$'s to bound
\[
\sum_{n\ne n'}|t_{n'}-t_n|^{-1} \lesssim 2^{\frac{m}{d-1}}\l^{\frac
  13} M \log M\,,
\]
thus
\[
2^m a^2 M \lesssim \log M\,,
\]
or
\begin{equation*}
  M \lesssim (2^m a^2)^{-1} \langle\log (2^m a^2)\rangle\,.
\end{equation*}
\qed


\bigskip

\noindent{\bf Proof of Proposition \ref{p:shorttime'}.}
We estimate $\|u_{a,k}\|_{L^p( A_{a,k,m})}$ on a single slice
$I\times\R^{d-1}$, where $I\in \I_m$. By \eqref{Nakbound}, if there
are $N$ terms in the sum for $u_{a,k}$, then $Na^2\lesssim 2^{-k}$, so
by orthogonality the total energy of $u_{a,k}$ is $\lesssim 2^{-\frac
  k2}$. We then have the Strichartz estimates
\[
\| u_{a,k}\|_{L^{p_d}(I\times\R^{d-1})} \lesssim \lambda^{\frac
  1{p_d}}2^{-\frac k2}\,,\qquad p_d=\tfrac{2(d+1)}{d-1}\,.
\]
If $k>\frac m{d-1}$, this is proven on each $2^{-k}\l^{-\frac 13}$
dyadic subinterval of $I$ then summed.

We interpolate this with the $L^\infty$ bound
\[
\| u_{a,k}\|_{L^\infty(A_{a,k,m}\cap I\times\R^{d-1})} \lesssim
\lambda^{\frac{d-1}3} 2^m a\,,
\]
to obtain
\[
\|u_{a,k}\|_{L^p(A_{a,k,m}\cap I\times\R^{d-1})}^p \lesssim \lambda
(\lambda^{\frac{d-1}3} 2^m a)^{p-p_d}2^{-\frac {kp_d}2}\,.\qed
\]


\end{document}